\def\E{\mathbb{E}}
\def\var{\mathbb{Var}}
\newcommand{\bl}{\boldsymbol{\lambda}}
\newcommand{\blambda}{\boldsymbol{\lambda}}
\newcommand{\dist}{\mathrm{dist}}
\newcommand{\Abar}{\overline{A}}
\newcommand{\vu}{\vec{u}}
\newcommand{\ve}{\vec{e}}
\newcommand{\Var}{\operatorname{Var}}
\newcommand{\one}{\mathbf{1}}
\renewcommand{\sp}{\mathrm{sp}}
\newcommand{\SA}{\mathrm{SA}}
\newcommand{\diam}{\mathrm{diam}}
\newcommand{\bz}{\mathbf{z}}
\def\E{\mathbb{E}}
\def\R{\mathbb{R}}
\def\P{\mathbb{P}}
\def\Z{\mathbb{Z}}
\def\eps{\varepsilon}
\def\del{\delta}
\def\bv{\mathbf v}
\def\bx{\mathbf x}
\def\1{\mathbf{1}}
\def\lam {\lambda}
\def\Lam{\Lambda}
\def\tce{t_c + \eps}
\def\tce2{t_c + \frac{\eps}{2}}
\renewcommand{\S}{\mathbb{S}}
\DeclareMathOperator\supp{supp}
\def\bv{\mathbf{v}}
\def\bX{\mathbf{X}}
\def\by{\mathbf{y}}
\newcommand{\TT}{\mathbb{T}}
\def\var{\text{var}}
\newtheorem*{theorem*}{Theorem}
\newtheorem{theorem}{Theorem}
\newtheorem{lemma}[theorem]{Lemma}
\newtheorem{cor}[theorem]{Corollary}
\newtheorem{prop}[theorem]{Proposition}
\newtheorem*{prop*}{Proposition}
\newtheorem*{conj*}{Conjecture}
\newtheorem{claim}[theorem]{Claim}
\newtheorem{question}{Question}
\newtheorem*{fact*}{Fact}
\newtheorem{fact}[theorem]{Fact}
\theoremstyle{definition}
\newtheorem{defn}[theorem]{Definition}
\newtheorem*{defn*}{Definition}
\theoremstyle{remark}
\newtheorem*{remark*}{Remark}
\newtheorem{example}{Example}
\newcommand{\cP}{\mathcal{P}}
\newcommand{\cF}{\mathcal{F}}
\newcommand{\vn}{\vec{n}}
\begin{document}
	\title[Strong spatial mixing  for repulsive point processes]{Strong spatial mixing  for repulsive \\point processes}
	\author{Marcus Michelen}
	\address{Department of Mathematics, Statistics, and Computer Science\\ University of Illinois at Chicago}
	\author{Will Perkins}
	\address{School of  Computer Science\\ Georgia Institute of Technology}
	\email{michelen.math@gmail.com, \, \\  math@willperkins.org}
	\date{\today }

	\begin{abstract}
		We prove that a Gibbs point process interacting via a finite-range, repulsive potential $\phi$ exhibits a strong spatial mixing property for activities $\lam < e/\Delta_{\phi}$, where $\Delta_{\phi}$ is the potential-weighted connective constant of $\phi$, defined recently in~\cite{mp-CC}.  Using this we derive several analytic and algorithmic consequences when $\lam$ satisfies this bound:
\begin{enumerate}
\item We prove  new identities for the infinite volume pressure and surface pressure of such a process (and in the case of the surface pressure establish its existence).
\item We prove  that local block dynamics for sampling from the model on a box of volume $N$ in $\R^d$ mixes in time $O(N \log N)$, giving efficient randomized algorithms to approximate the partition function  and approximately sample from these models.
\item We use the above identities and algorithms to give efficient approximation algorithms for the pressure and surface pressure.
\end{enumerate}
	\end{abstract}

	\maketitle

\section{Introduction}

A Gibbs point process (or a classical gas) is a model of a gas or a fluid with particles interacting in the continuum via a pair (or multibody) potential (see Ruelle's classic reference on the topic for background~\cite{ruelle1999statistical}).  To understand the equation of state of a gas modeled by a Gibbs point process, one needs to compute the infinite volume pressure and density and understand their relationship.  Finite-volume effects can be understood by computing the surface pressure of the model.   The classic rigorous approach to this problem is to use convergent series expansions, like the Mayer and virial series~\cite{mayer1941molecular,penrose1963convergence,ruelle1963correlation,lebowitz1964convergence}, while sampling  methods like  Markov chain Monte Carlo  and molecular dynamics are used experimentally to understand these models~\cite{metropolis1953equation,alder1957phase,Krauth}.   There are theoretical and practical limitations to all of these methods, related to both analytic and computational obstacles.  For instance,  expansion methods are limited to the domain of convergence of the relevant series, and the Markov chain Monte Carlo approach is guaranteed to be efficient only in parameter regimes in which there is rapid convergence to the equilibrium distribution.

 Gibbs point processes are also used to model a wide variety of phenomena in other fields, from the growth of trees in a forest, to the locations of galaxies in the universe, to the spatial and temporal occurrences of earthquakes.  See~\cite{moller2007modern,daley2007introduction} for an introduction from this perspective.  
 To do modeling work with a  spatial point process one would like to sample from the process or compute its associated statistics.  Whether these tasks can be accomplished efficiently also depends on the model and parameters.  There are a wide variety of approaches to sampling from point processes including approximate sampling algorithms based on Markov chains~\cite{preston1975spatial,moller1989rate} and perfect sampling algorithms based on the technique of coupling from the past~\cite{propp1996exact,haggstrom1999characterization,garcia2000perfect,moller2001review,huber2016perfect}.

We use recently developed recursive identities for the density of a Gibbs point process to establish that  Gibbs point processes interacting via repulsive pair potentials exhibit  \textit{strong spatial mixing}  in a wider range of parameters than previously known.  This allows us to prove new identities for the pressure and surface pressure of the models and to provide new algorithms for the above problems with rigorous guarantees of efficiency.   Our approach is inspired by techniques from computer science, including Weitz's approach to approximate counting~\cite{Weitz}, the refinement of this method based on connective constants~\cite{sinclair2017spatial}, and connections between computational and probabilistic properties of discrete statistical mechanics models~\cite{dyer2004mixing,gamarnik2009sequential}.

We begin by defining the point processes we study.
 
 A Poisson process of intensity or activity $\lam$ is the benchmark spatial point process with non-interacting points: the number of points appearing in a region $B$ has a Poisson distribution with mean $\lam |B|$ where we write $|B|$ for the Lebesgue measure of a set, and the numbers of points appearing in disjoint regions are independent.  
  
A Gibbs point process interacting via a repulsive pair potential is defined by a density with respect to  a Poisson process.  For a pairwise interacting Gibbs point process this density is given by $e^{-H(\cdot)}$ where
\begin{align*}
H(x_1, \dots , x_k) = \sum_{1 \le i < j \le k} \phi (x_i-x_j)   \,,
\end{align*}
where $\phi : \R^d \to (-\infty, + \infty]$ defines the (translation invariant) \textit{pair potential} which satisfies $\phi(x) = \phi(-x)$.  The potential is \emph{repulsive} if $\phi(x) \ge 0$ for all $x$.   The potential is of \emph{finite range} if there exists $r \ge 0$ so that $\phi(x) = 0$ when $\| x \|>r$, where $\| \cdot \|$ is the Euclidean norm on $\R^d$. Our results will apply to finite-range, repulsive pair potentials.  Examples of point processes defined by finite-range repulsive pair potentials include the Strauss process~\cite{strauss1975model,kelly1976note}, a model of anti-clustering that penalizes pairs of nearby points, and the hard sphere model~\cite{alder1957phase,lowen2000fun}, a long-studied model of a gas in statistical physics.   See Section~\ref{secExamples} below for more on these examples.

Now fix a pair potential $\phi$.  For a bounded, measurable region $\Lam \subset \R^d$ and activity $\lam \ge 0$, the \emph{partition function} of the Gibbs point process is
\begin{equation}
\label{eqPPpartition1}
Z_{\Lam}(\lam) = 1+ \sum_{k \ge1 } \frac{\lam^k}{k!}   \int_{\Lam^k}  e^{-H(x_1, \dots ,x_k)}  \, dx_1 \cdots dx_k \,.
\end{equation}
The probability measure $\mu_{\Lam,\lam}$ on finite point sets in $\Lam$ is defined by
\begin{equation} \label{eq:mu-def1}
\mu_{\Lam,\lam}(A) =\frac{1}{Z_{\Lam}(\lam)} \sum_{k \geq 0} \frac{\lam^k}{k!}\int_{\Lam^k}\one_{\{x_1, \dots, x_k \} \in A} e^{-H(x_1,\ldots,x_k)} \,dx_1\,\cdots\,dx_k \,,
\end{equation}
where $A \subseteq \bigcup_{k \ge 0} \Lam^k$. 
By taking appropriate limits, one can obtain the infinite volume pressure and surface pressure of the model.  Let $\Lambda_n = [-n,n]^d$ be the axis parallel box of side length $2n$ centered at the origin in $\R^d$, and let $|\Lambda_n| = (2n)^d$ denote its volume.  The \emph{infinite volume pressure} (or simply, the pressure) is
\begin{equation}
\label{eqPressureDef}
p(\lam) = \lim_{n \to \infty} \frac{1}{|\Lambda_n|} \log Z_{\Lambda_n} (\lam) \,.
\end{equation}
Under very general conditions this limit exists (see e.g.~\cite{ruelle1999statistical}), and the non-analytic points of $p(\lam)$ on the positive real axis mark phase transitions of the model~\cite{yang1952statistical}.  Additionally, we may replace the sequence of boxes $\Lambda_n$ with other growing shapes such as balls and obtain the same limit.

The \emph{surface pressure} along a box is defined by
 \begin{equation}
 \label{eqSPressureDef}
 \sp_{\Lambda}(\lambda) = \lim_{n \to \infty} \frac{1}{ \SA(\Lam_n) } \left(\log Z_{\Lambda_n}(\lambda) - |\Lambda_n| p(\lambda) \right)\,,
 \end{equation}
 where $\SA(\Lam_n) = d2^{d}n^{d-1}$ is the surface area of $\Lam_n$. 
The surface pressure is a measure of the first-order correction to the pressure in finite volume. Unlike the pressure, it is not immediately clear that the limit in~\eqref{eqSPressureDef} exists; further, for some potentials this limit may depend on the shape of the finite-volume regions  considered, e.g. the limit may be different if the box $\Lambda_n$ is replaced with a ball.  As such, when referring to surface pressure, we use a subscript to denote the shape along which the limit is taken, with $\Lam$ representing a box and $B$ representing a ball.

Because of the presence of the partition function in defining the probability measure, a Gibbs point process is not analytically tractable in general, hence the need for alternative computational methods such as Markov chain Monte Carlo or series expansion methods.  

There are two central and related computational problems associated to these  processes. The \textit{approximate sampling problem} asks for a randomized algorithm to output a point set in $\Lam$ with distribution within $\eps$ total variation distance of $\mu_{\Lam,\lam}$.    The approximate counting problem asks for an algorithm to output an $\eps$-relative approximation to $Z_{\Lam}(\lam)$ (we give more formal definitions in Section~\ref{secBlockDynamics}).   We say such algorithms are efficient if they run in time polynomial in the volume of $\Lam$ and in $1/\eps$.

In general, we may hope that when the interaction strength is weak enough or the activity $\lam$ is small enough, efficient computational methods exist, just as it is known that no phase transition  occurs with weak enough interaction and small enough activity.  One measurement of the strength of the potential $\phi$ is its temperedness constant  
 $$C_\phi = \int_{\R^d}|1 - e^{-\phi(w)}|\,dw \,.$$
 For the hard sphere model, $C_{\phi}$ is simply the volume of the ball of radius $r$.  In~\cite{michelen2020analyticity} it was shown using computational recursions inspired by~\cite{Weitz}  that there is no phase transition for $\lam < e /C_{\phi}$ for any repulsive potential $\phi$.   Then in~\cite{mp-CC},  a \emph{potential-weighted connective constant} $\Delta_{\phi}$ was defined that captures the interplay between the strength of the potential and the geometry of the underlying space.  We now recall the definition of $\Delta_\phi$.  Let
{\small
\begin{equation} \label{eq:Vk-def}
V_k = \int_{(\R^d)^k} \prod_{j = 1}^k \left(\exp\left(-\sum_{i = 0}^{j-2} \one_{\|v_j-v_i\| < \|v_i-v_{i+1}\|} \phi(v_j -v_i) \right)\cdot \left(1 - e^{-\phi(v_j -v_{j-1})}\right) \right)\,d\bv
\end{equation} }
where we write $d\bv = dv_1\,dv_2\,\ldots\,d_k$, interpret $v_0 = 0$ and in the case of $j = 1$ interpret the empty sum as equal to $0$.  The sequence $V_k$ is submultiplicative, and so we may define the \emph{potential-weighted connective constant} $\Delta_\phi$ via 
\begin{equation}\label{eq:CC-def}
\Delta_\phi := \lim_{k \to \infty} V_k^{1/k} = \inf_{k \geq 1} V_k^{1/k}\,.
\end{equation}
Immediately from the definition we have that $\Delta_\phi \leq C_\phi$, and the inequality is strict for any non-trivial potential on $\R^d$.   For instance, for hard spheres in dimension $2$, $\Delta_{\phi} \le .841 \cdot C_{\phi}$.
In~\cite{mp-CC}, uniqueness of  the infinite volume Gibbs measure and analyticity of the pressure was proved for $\lam < e /\Delta_{\phi}$ by understanding the interplay between the computational recursions of~\cite{michelen2020analyticity} and the geometry of the underlying space captured by $\Delta_{\phi}$. 

Here we prove that a strong correlation decay property (\textit{strong spatial mixing}, Definition~\ref{defSSM} below) holds for finite-range potentials under the  condition $\lam < e /\Delta_{\phi}$,  and we use this to prove both algorithmic and probabilistic results.  For this range of parameters (which goes well beyond the regime of cluster expansion convergence) we:
\begin{enumerate}

\item Prove new identities for the pressure and surface pressure (Theorems~\ref{lem:pressure-dir} and~\ref{lem:surface-pressure}).  In particular, this establishes the existence of the limit defining the surface pressure in~\eqref{eqSPressureDef} for a wider range of parameters than previously known.  
\item Establish a bound on the finite-volume correction to the pressure in a box with periodic boundary conditions that is exponentially small in the sidelength (Theorem~\ref{lem:sp-torus}).  
\item  Provide efficient randomized approximate counting and sampling algorithms for the finite-volume Gibbs measure and partition function respectively (Theorem~\ref{thmFastMixing} and Corollary~\ref{thmApproxAlg}).
\item Use these above identities and the algorithmic results to provide efficient approximation algorithms for the pressure and surface pressure (Theorem~\ref{thm:pressure-algs}).  
\end{enumerate}

To state our results precisely we first provide a very general definition of strong spatial mixing for repulsive Gibbs point processes.  While spatial mixing properties typically are defined in terms of the effect of different boundary conditions on a probability measure, we  generalize this by considering the effect of modifying the activity of a point process at different locations, which leads to the study of point processes with non-uniform activity functions.		
	
\subsection{Strong spatial mixing}
\label{subsecSSM}

The notion of \textit{strong spatial mixing} from the study of lattice and other discrete spin systems (e.g.~\cite{dobrushin1985completely,stroock1992equivalence,stroock1992logarithmic,martinelli1999lectures,martinelli1994approach,dyer2004mixing,sinclair2017spatial}) captures in a robust way the degree to which boundary conditions can affect the distribution of a Gibbs measure.  For short-range spin models on $\mathbb Z^d$, the importance of strong spatial mixing is that it is equivalent to the fast mixing of Glauber dynamics (e.g.,~\cite{stroock1992equivalence}).   In computer science, this connection between strong spatial mixing and algorithms plays an important  role since it is the criteria for the success of a family of approximate counting and sampling algorithms using the `method of correlation decay' due to Weitz~\cite{Weitz}. 

Here we define a notion of strong spatial mixing for Gibbs point processes interacting via  repulsive potentials.  The definition is very general and makes use of an inhomogeneous generalization of the reference Poisson process, in which the activity $\lam$ is replaced by an activity function $\bl$.   Our proofs will also make essential use of this generalization. 
  
  For an integrable function $\bl : \R^d \to \R_{\ge 0}$ with bounded support and a pair potential $\phi$,  the partition function of the Gibbs point process with potential $\phi$ and activity function $\bl$ is
\begin{equation}
\label{eqPPpartition}
Z(\bl) = 1+ \sum_{k \ge1 } \frac{1}{k!}   \int_{(\R^d)^k}  \bl(x_1) \cdots \bl(x_k) e^{-H(x_1, \dots ,x_k)}  \, dx_1 \cdots dx_k \,.
\end{equation}	
The corresponding Gibbs measure  $\mu_{\bl}$ is defined by
\begin{equation} \label{eq:mu-def}
\mu_{\bl}(A) = \sum_{k \geq 0} \frac{1}{k!}\int_{(\R^d)^k}\one_{\{x_1, \dots, x_k \} \in A} \frac{\bl(x_1)\cdots \bl(x_k)}{Z(\bl)} e^{-H(x_1,\ldots,x_k)} \,dx_1\,\cdots\,dx_k \,.
\end{equation}
  The model defined above in~\eqref{eq:mu-def1} can be recovered by taking $\bl = \lam$ on $\Lam$ and $0$ elsewhere.  

The connection to boundary conditions is as follows.  Suppose we consider the Gibbs point process on $\Lam$ with activity $\lam$ but fix points at $y_1, \dots, y_t \in \Lam$.  The distribution of the other points is now affected by the presence of these points via the pair potential $\phi$.  The effect can be accounted for by changing the activity of each point in $\Lam$ as follows:
\[ \bl(x) = \lam e^{ - \sum_{j=1}^t \phi(x-y_j)} \,,\]
and so this new activity function $\bl$ implements the boundary conditions imposed by the points  $y_1, \dots, y_t$.  On the other hand, there are many activity functions $\bl$ that cannot be implemented by boundary conditions, and so it is a genuine generalization.  It will be important in our proofs that for a repulsive potential modifying any activity function in this way  can only decrease it pointwise.    

Strong spatial mixing   measures   how close the distributions of $\mu_{\bl}$ and $\mu_{\bl'}$ are, projected to a set $\Lam$, when $\bl$ and $\bl'$ differ only on a set $A$ far from $\Lam$.   To make this precise we need several definitions.
	
	For two probability measures $\mu, \mu'$ on the same sample space equipped with the same $\sigma$-algebra the \textit{total variation distance} between the two measures is 
	\[ \| \mu - \mu ' \|_{TV} =  \sup_A | \mu(A) - \mu'(A)|  \, ,\]
	where the supremum is over all events $A$.   
	For an activity function $\bl$ and  region $ \Lam \subset \R^d$, let $\mu_{\bl}^\Lam$ denote the law of the point process $\mu_{\bl}$ projected to $\Lam$; one may think of the projected measure $\mu_{\bl}^\Lam$ as being given by samples $\mu_{\bl}$ and then looking only at the point set within $\Lam$. 
	Then for  activity functions $\bl, \bl'$ we define 
	\begin{align*}
	\| \mu_{\bl} - \mu_{\bl'} \|_{\Lam} = \| \mu_{\bl}^\Lam - \mu_{\bl'}^\Lam \|_{TV} \,.
	\end{align*}

	We now formally define the notion of projecting the law of a point process $\mu_{\bl}$ to a region $\Lam \subset \R^d$.   Let $\mathcal N_f$ denote the finite point sets in $\R^d$ and $\mathfrak N(\Lam)$ be the $\sigma$-field generated by finite point sets in $\Lambda$.  For a set $A \in \mathfrak N(\Lam)$ we can define the probability of $A$ under the measure $\mu_{\bl}$ projected to $\Lam$ by defining  $ A_{\Lam} = \{ \eta \in \mathcal N_f : \eta \cap \Lam \in A \}$;  that is, $ A_{\Lam}$ is the set of all finite counting measures on $\R^d$ whose restriction to $\Lam$ is in $A$.  We then define $\mu_{\bl}^\Lam(A)  = \mu_{\bl}( A_{\Lam})$.   We thus obtain the identity
	\begin{equation}
	\label{eqTVProj}
	\| \mu_{\bl} - \mu_{\bl'} \|_{\Lam} = \sup_{A \in \mathfrak N(\Lam) } | \mu_{\bl}( A_{\Lam}) - \mu_{\bl'}( A_{\Lam}) | \,.
	\end{equation}
	
	We denote the support of a function $f$ by $\supp(f) = \{x \in \R^d : f(x) \ne 0 \}$.  For a measurable set $S \subset \R^d$ we let $|S|$ denote its Lebesgue measure.  For two sets $A$ and $B$ in $\R^d$ set $\dist(A,B) = \inf_{x \in A,y \in B} \|x - y\|$; for the case of $A = \{v\}$ we will also write $\dist(v,B) = \dist(\{v\}, B)$.  We can now define strong spatial mixing.
	\begin{defn}
		\label{defSSM}
		The family of point processes on $\R^d$ defined by a repulsive pair potential $\phi$ exhibits \textbf{strong spatial mixing} with activities bounded by $\lam>0$ if there exist constants $\alpha, \beta >0$ so that the following holds: for any bounded, measurable region $\Lam \subset \R^d$ and any two activity functions $\bl, \bl'$ bounded by $\lam$:
\begin{equation}
\label{eqSSMdef}
 \| \mu_{\bl} - \mu_{\bl'} \|_{\Lam} \le \alpha | \Lam|  e^{-\beta \cdot \mathrm{dist}(\Lam, \supp(\bl-\bl')) } \, .
 \end{equation}
	\end{defn}

Our first main result is that strong spatial mixing holds for finite-range repulsive potentials for activities less than $e/ \Delta_{\phi}$. 
	\begin{theorem}
		\label{thmSSM}
		Let $\phi$ be a finite-range, repulsive potential and let $\Delta_{\phi}$ be the potential-weighted connective constant defined in~\eqref{eq:CC-def}.
		Then for any $\lam \in [0, e/ \Delta_{\phi})$, the family of point processes defined by $\phi$  exhibits strong spatial mixing with activities bounded by $\lam$.  
	\end{theorem}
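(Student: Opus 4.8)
The plan is to reduce the total-variation statement of Definition~\ref{defSSM} to an exponentially decaying bound on the difference of \emph{one-point intensities}, and to establish the latter by unfolding the recursive description of the density underlying~\cite{mp-CC} into a computation tree, in the spirit of Weitz's correlation-decay method~\cite{Weitz} and its connective-constant refinement~\cite{sinclair2017spatial}.

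First I would isolate the key estimate: there exist constants $\alpha',\beta'>0$ depending only on $\phi$ and $\lam$ so that for every $u\in\R^d$ and all activity functions $\hat\bl,\hat\bl'$ bounded by $\lam$, writing $\rho_{\hat\bl}(u)$ for the one-point intensity of $\mu_{\hat\bl}$ at $u$,
\[
\bigl|\rho_{\hat\bl}(u)-\rho_{\hat\bl'}(u)\bigr|\;\le\;\alpha'\, e^{-\beta'\,\dist(u,\,\supp(\hat\bl-\hat\bl'))}\,.
\]
Granting this, Theorem~\ref{thmSSM} follows by a coupling argument: reveal the two point processes over a fine partition of $\Lam$, cell by cell; as long as the revealed parts agree, the conditional law in the next cell is governed by the Papangelou intensities of the two measures, which are obtained from $\hat\bl,\hat\bl'$ by multiplying by $e^{-\sum_y \phi(\cdot-y)}$ over the points $y$ already revealed. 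Since $\phi\ge 0$, this operation cannot raise the activity above $\lam$ and cannot enlarge $\supp(\hat\bl-\hat\bl')$, so the key estimate applies uniformly along the coupling; a union bound over the cells then gives $\|\mu_{\bl}-\mu_{\bl'}\|_{\Lam}\le |\Lam|\cdot\alpha'\, e^{-\beta'\,\dist(\Lam,\supp(\bl-\bl'))}$, which is exactly~\eqref{eqSSMdef}, and explains the volume prefactor there.

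For the key estimate I would use the recursive identity for $\rho_{\bl}(u)$ from~\cite{mp-CC}, which expresses it in terms of $\bl(u)$ and the intensities --- restricted to the $r$-ball around $u$, where $r$ is the range --- of the process obtained by pinning a point at $u$; for a repulsive potential, pinning merely replaces $\bl$ by $\bl\cdot e^{-\phi(u-\cdot)}\le\bl$, so activities stay bounded by $\lam$. Iterating this identity for both $\hat\bl$ and $\hat\bl'$ unfolds $\rho_{\hat\bl}(u)$ into a weighted sum over finite rooted trees of points $u=v_0,v_1,\dots$ with consecutive points within distance $r$ (the kernel carries a factor $1-e^{-\phi(v_{j-1}-v_j)}$) and activities screened by the points on the path back to the root --- a structure whose depth-$k$ weight is governed precisely by the quantity $V_k$ of~\eqref{eq:Vk-def}, including the ``shorter backward edge'' indicators there. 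Since $\hat\bl=\hat\bl'$ off their disagreement set, the two unfoldings coincide until a vertex enters that set, and since each edge has length at most $r$, every surviving term carries a path of length at least $\dist(u,\supp(\hat\bl-\hat\bl'))/r$. After a suitable potential-function change of variables (exactly as in~\cite{Weitz,sinclair2017spatial}) the per-level recursion becomes a strict contraction whose amortized rate is $<1$ precisely in the regime $\lam<e/\Delta_\phi$, with the connective constant $\Delta_\phi=\lim_k V_k^{1/k}$ playing the role of the branching number; this is the continuum counterpart of the hard-core uniqueness threshold, which behaves like $e/\Delta$ for large branching $\Delta$. Summing over the truncated computation trees then yields geometric decay in the path length, hence the claimed exponential decay in $\dist(u,\supp(\hat\bl-\hat\bl'))$.

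The main obstacle is obtaining the \emph{sharp} threshold $e/\Delta_\phi$ rather than a weaker constant: a bare unfolding controls the tree weights only by $\sum_k \lam^k V_k$, which converges for $\lam<1/\Delta_\phi$ but not beyond, so the extra factor of $e$ must be extracted by choosing the potential function correctly and exploiting the screening factors of $V_k$ efficiently --- the same phenomenon that, in the discrete hard-core model, turns the naive bound into Weitz's threshold $\lam_c(\Delta)$ and that drives the analyticity proof in~\cite{mp-CC}. A secondary, more routine obstacle is making the continuum cell-by-cell coupling of the second paragraph fully rigorous via standard Papangelou-intensity (Georgii--Nguyen--Zessin) arguments.
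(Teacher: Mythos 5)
Your proposal follows the same core strategy as the paper, and I believe it is correct modulo the ``routine obstacle'' you flag. The key estimate you isolate --- exponential decay of the difference of one-point intensities as a function of $\dist(u,\supp(\hat\bl-\hat\bl'))$, uniformly over $\lam$-bounded activities --- is exactly what the paper imports from~\cite{mp-CC} as Proposition~\ref{prop:connective-contract} (combined with the trivial lower bound of Lemma~\ref{lem:rho-lb} to turn it into a multiplicative estimate), and your description of how the recursion of Lemma~\ref{lem:recursion} unfolds into a $V_k$-weighted tree with screening factors and yields the threshold $\lam<e/\Delta_\phi$ matches the paper's reliance on that machinery.

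Where you diverge is in the reduction from the total-variation statement of Definition~\ref{defSSM} down to the one-point estimate. The paper proves an exact projection identity (Lemma~\ref{lemProjection}) expressing $\mu_{\bl}(A_\Lam)$ as an integral over $k$-point densities multiplied by an exponential of an integral of one-point densities of modified activities; the difference of the two integrands is then controlled via the telescoping factorization $\rho_{\bl}(v_1,\dots,v_k)=\prod_j \rho_{\bl\, e^{-\sum_{i<j}\phi(v_i-\cdot)}}(v_j)$ (Corollary~\ref{corExpkpoint}) and Poisson domination of the number of points. Your cell-by-cell revelation coupling accomplishes the same reduction but requires a discretization limit: to bound the TV distance between the two conditional laws in a cell $S_i$ by the one-point intensity difference, the cell must be taken infinitesimal (so that its conditional law is, to leading order, determined by the Papangelou intensity) and the $O(\epsilon^{2d})$ higher-order contributions must be controlled uniformly before summing $|\Lam|/\epsilon^d$ cells. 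This is essentially the continuum limit of the sequential cavity method and should go through via GNZ-type manipulations, as you anticipate, but the paper's closed-form identity sidesteps the limiting argument. Note also that both approaches crucially use --- as you correctly emphasize --- that for a repulsive $\phi$ the conditioning on revealed points only decreases activities and never enlarges the disagreement set, so the one-point estimate applies at every stage of the reveal.
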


 Strong spatial mixing for a finite-range potential follows from a convergent cluster expansion (e.g.~\cite{ueltschi2004cluster}); this is known to hold (in the repulsive case) for activities bounded by $1/(e C_{\phi})$~\cite{groeneveld1962two} with some recent improvements for specific potentials~\cite{fernandez2007analyticity,jansen2019cluster,nguyen2020convergence}.  It is also known that the cluster expansion \textit{cannot} converge for activities larger than  $1/C_{\phi}$, and so Theorem~\ref{thmSSM} improves the known bound by a factor at least  $e^2$ and the known limit of the cluster expansion approach by a factor $e$.   In the specific case of the hard sphere model, Theorem~\ref{thmSSM} improves the bound for strong spatial mixing from~\cite{helmuth2020correlation} by a factor at least $e/2$.   
 
 The method of disagreement percolation~\cite{van1993uniqueness,van1994percolation} can be used in the discrete setting (Ising and hard-core models) to prove uniqueness of Gibbs measure and strong spatial mixing.  Recently disagreement percolation has been applied to Gibbs point processes to prove uniqueness results~\cite{hofer2019disagreement,benevs2019decorrelation,last2021disagreement,betsch2021uniqueness}, and it is quite possible that this method can also prove strong spatial mixing in this setting.  In high dimensions (for, say, the hard sphere potential) the bound for strong spatial mixing obtained in this manner will necessarily be worse than the bound of Theorem~\ref{thmSSM} by a factor tending to  $e$ as $d \to \infty$; however in  dimension $2$,  high-confidence simulations suggest that an improvement may be possible with this method (see the discussion in~\cite[Section 5]{betsch2021uniqueness}).

\subsection{Results  for the pressure and surface pressure}
\label{subsecPressure}
	We now give new identities for thermodynamic quantities such as the pressure and the surface pressure under the assumption of strong spatial mixing, inspired by the \textit{sequential cavity method} of Gamarnik and Katz~\cite{gamarnik2009sequential}.   
	
	These identities will be in terms of {one-point densities} of point processes.  The one-point density $\rho_{\bl}(\cdot)$ is  the density, with respect to Lebesgue measure,  of the measure computing the expected number of points of the point process $\mu_{\bl}$ in a given set.  It can be written in terms of an expectation.   For a point $v \in \R^d$ and an instance of the point process $\mathbf{X}$, define the \emph{added energy} $H_v(\bX)$ to be the random variable $H_v(\bX) := H(v,\mathbf{X}) - H(\mathbf{X})$.  Then the \textit{one-point density} of $\mu_{\bl}$ at $v$ is
	\begin{equation}\label{eq:added-energy}
	\rho_{\bl}(v) = \bl(v) \E_{\mu_{\bl}} e^{-H_v(\bX)}
	\end{equation}
	where the expectation is over the Gibbs point process with activity function $\bl$.  
	
	We will also want to define one-point densities for activity functions $\bl$ that do not have bounded support; for instance, for the constant function $\bl \equiv \lam$. In general this density may not be well defined, since there may be multiple infinite volume Gibbs measures on $\R^d$ with activity $\lam$.  However, under the assumption of strong spatial mixing the density is well defined and we can express it as a limit:
\begin{equation}
\label{eqInfVolDensity}
\rho_{\bl} (v) = \lim_{n \to \infty} \rho_{\bl_n}(v) 
\end{equation}
where $\bl_n(y) = \mathbf 1_{y \in B_n} \cdot \bl(y)$ and $B_n$ is the ball of radius $n$ centered at the origin.    Since $\phi$ is of finite range, the added energy is local, and so the limit of expectations defining $\rho_{\bl} (v) $ exists by the definition of strong spatial mixing.

	Now recall the definitions of the pressure and surface pressure from~\eqref{eqPressureDef} and~\eqref{eqSPressureDef}.  For a unit vector $\vu \in \S^{d-1}$ and $\lambda > 0$, define $\bl_{\vu}$ by $$\bl_{\vu}(x) = \lam\one\{\langle \vu , x \rangle \geq 0 \} \,.$$
	    The function $\bl_{\vu}$ does not have bounded support, so define $\rho_{\bl_{\vu}}$ via~\eqref{eqInfVolDensity}.  Our next main result is that the infinite volume pressure is given by the evaluation of a single one-point density.

	\begin{theorem}\label{lem:pressure-dir}
	Let $\lambda > 0$ so that we have strong spatial mixing for all $\bl \leq \lambda$.  Then for any $\vu \in \S^{d-1}$ we have 
	\begin{equation}\label{eq:pressure-identity}
	p(\lambda) = \rho_{\bl_{\vu}}(0)\,.
	\end{equation}
\end{theorem}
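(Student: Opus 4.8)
\emph{Overall strategy.} I would use a continuous version of the \emph{sequential cavity method} of Gamarnik and Katz~\cite{gamarnik2009sequential}. First, I would write $\log Z_{\Lambda_n}(\lambda)$ \emph{exactly} as an integral over $v\in\Lambda_n$ of one-point densities in which the activity $\lambda$ has been switched on only on the part of $\Lambda_n$ lying ahead of $v$ in the direction $\vu$. Then, after translating $v$ to the origin, strong spatial mixing should force each such density to be within $e^{-\Omega(\ell_n)}$ of $\rho_{\bl_{\vu}}(0)$ whenever $v$ lies at distance at least $\ell_n$ from $\partial\Lambda_n$. Finally, the contribution of the $O(\ell_n n^{d-1})$-volume boundary layer is negligible after dividing by $|\Lambda_n|=(2n)^d$, so the normalized log-partition function converges to $\rho_{\bl_{\vu}}(0)$, which by~\eqref{eqPressureDef} is $p(\lambda)$.

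\emph{Step 1: the cavity identity.} For $s\in\R$ set $\bl^{(s)} := \lambda\,\one\big(\Lambda_n\cap\{\langle\vu,\cdot\rangle>s\}\big)$, so $\bl^{(s)}$ interpolates between $\bl^{(+\infty)}\equiv 0$ and $\bl^{(-\infty)}=\lambda\one(\Lambda_n)$. From~\eqref{eqPPpartition} and~\eqref{eq:mu-def} one has the basic functional-derivative identity $\frac{\delta}{\delta\bl(y)}\log Z(\bl)=\E_{\mu_{\bl}}[e^{-H_y(\X)}]$; since the series for $Z(\bl^{(s)})$ converges uniformly in $s$ (as $e^{-H}\le 1$ and $|\Lambda_n|<\infty$) it may be differentiated termwise, and with the coarea formula on the box this gives, for a.e.\ $s$,
\[
\frac{d}{ds}\log Z(\bl^{(s)}) \;=\; -\lambda\int_{\{\langle\vu,\cdot\rangle=s\}\,\cap\,\Lambda_n}\E_{\mu_{\bl^{(s)}}}\!\big[e^{-H_y(\X)}\big]\,d\sigma(y),
\]
with $d\sigma$ the $(d-1)$-dimensional surface measure. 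Integrating in $s$ and applying Fubini/coarea converts this into
\[
\log Z_{\Lambda_n}(\lambda) \;=\; \int_{\Lambda_n}\rho_{\bl_{n,v}}(v)\,dv,\qquad \bl_{n,v} \;:=\; \lambda\,\one\big\{w\in\Lambda_n:\ \langle\vu,w\rangle\ge\langle\vu,v\rangle\big\}.
\]
(Equivalently, this identity can be obtained without calculus by telescoping $Z_{\Lambda_n}(\lambda)$ over thin slabs orthogonal to $\vu$ and bounding the quadratic-in-width corrections; I expect that route may be cleaner to write.)

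\emph{Steps 2 and 3: strong spatial mixing and the boundary correction.} Translation invariance of $\phi$ gives $\rho_{\bl_{n,v}}(v)=\rho_{\tau_{-v}\bl_{n,v}}(0)$, and $\tau_{-v}\bl_{n,v}=\lambda\,\one\{u\in\Lambda_n-v:\ \langle\vu,u\rangle\ge 0\}$ coincides with $\bl_{\vu}$ on every ball $B_\ell(0)\subseteq\Lambda_n-v$ and differs from it only outside that ball. Because $\phi$ has finite range $r$, the bounded function $e^{-H_0(\X)}\in[0,1]$ depends on $\X$ only through $\X\cap B_r(0)$, i.e.\ is $\mathfrak N(B_r(0))$-measurable; so applying Definition~\ref{defSSM} with $\Lambda=B_r(0)$ to $\tau_{-v}\bl_{n,v}$ and the truncation $(\bl_{\vu})_m:=\one(B_m)\bl_{\vu}$, and then sending $m\to\infty$ via~\eqref{eqInfVolDensity}, yields
\[
\big|\rho_{\bl_{n,v}}(v)-\rho_{\bl_{\vu}}(0)\big| \;\le\; \lambda\,\alpha\,|B_r(0)|\,e^{-\beta(\ell-r)}
\]
whenever $\dist(v,\partial\Lambda_n)\ge\ell>r$. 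Now fix $\ell_n\to\infty$ with $\ell_n=o(n)$. On the bulk $\{v:\dist(v,\partial\Lambda_n)\ge\ell_n\}$ the integrand in Step~1 is within $\eps_n:=\lambda\alpha|B_r(0)|e^{-\beta(\ell_n-r)}$ of $\rho_{\bl_{\vu}}(0)$; on the boundary layer, of volume $(2n)^d-(2(n-\ell_n))^d=O(\ell_n n^{d-1})$, I use the trivial bound $0\le\rho_{\bl}(\cdot)\le\lambda$ (valid since $\phi\ge 0$ forces $e^{-H_v}\le 1$). Dividing the identity of Step~1 by $(2n)^d$ gives
\[
\left|\frac{1}{|\Lambda_n|}\log Z_{\Lambda_n}(\lambda)-\rho_{\bl_{\vu}}(0)\right| \;\le\; \eps_n+\frac{2\lambda\cdot O(\ell_n n^{d-1})}{(2n)^d}\;\longrightarrow\;0,
\]
so $p(\lambda)=\rho_{\bl_{\vu}}(0)$. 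Since the argument applies verbatim to every unit vector $\vu$, the identity is automatically direction-independent (consistent with the reflection symmetry $\phi(x)=\phi(-x)$, which forces $\rho_{\bl_{\vu}}(0)=\rho_{\bl_{-\vu}}(0)$).

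\emph{Main obstacle.} The conceptual content is light; the fussiest point is making the cavity identity of Step~1 fully rigorous — justifying termwise differentiation of the partition-function series and the coarea manipulations on the box (or, equivalently, controlling the slab-telescoping errors) — together with the minor bookkeeping of invoking Definition~\ref{defSSM} for the non-compactly-supported activity $\bl_{\vu}$ through the truncation limit~\eqref{eqInfVolDensity}. Everything else (translation invariance, the bound $\rho\le\lambda$, and the $o(n^d)$ boundary layer) is routine.
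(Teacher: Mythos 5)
Your proof is correct and follows essentially the same architecture as the paper's: first express $\log Z_{\Lambda_n}(\lambda)$ as an integral of one-point densities with half-space activities (the sequential-cavity identity), then use strong spatial mixing to show these densities are within $e^{-\Omega(\ell_n)}$ of $\rho_{\bl_{\vu}}(0)$ in the bulk, and finally absorb the $O(\ell_n n^{d-1})$ boundary layer. The only meaningful difference is in how the cavity identity is obtained: the paper simply cites its earlier Lemma~\ref{lem:log-Z} (from \cite{mp-CC}, Lemma 33, applied with $q=+\infty$ to yield Corollary~\ref{cor:left-to-right}) and reduces to $\vu=\ve_1$ by rotation, whereas you re-derive the identity from scratch for an arbitrary direction $\vu$ via the functional-derivative/coarea argument (or, as you note, slab telescoping). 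Your route is self-contained and avoids the rotation step; the paper's is shorter by importing an existing lemma. You also make explicit a point the paper leaves implicit, namely that $e^{-H_0(\X)}$ is $\mathfrak N(B_r(0))$-measurable and that $\rho_{\bl_{\vu}}(0)$ must be reached through the truncations of~\eqref{eqInfVolDensity} since $\bl_{\vu}$ has unbounded support.
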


 We next  show that under the assumption of strong spatial mixing, the limit defining the surface pressure in~\eqref{eqSPressureDef} exists,  and we prove a useful identity for it.  
\begin{theorem}\label{lem:surface-pressure}
	Let $\lambda > 0$ so that we have strong spatial mixing for all $\bl \leq \lambda$.  Then  
	\begin{equation}\label{eq:surf-pressure-identity}
	\sp_{\Lambda}(\lambda) = \frac{1}{d} \sum_{j = 1}^d \int_{0}^\infty \int_0^1 \frac{\rho_{t\bl_{\ve_j}}(s\ve_j) - \rho_{t\lambda}(0)}{t} \,dt\,ds\,.
	\end{equation}
\end{theorem}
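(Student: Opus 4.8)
\emph{Approach.} The plan is to express both $\log Z_{\Lambda_n}(\lambda)$ and $|\Lambda_n|p(\lambda)$ through a scaling cavity identity and then use strong spatial mixing to show that their difference concentrates on thin layers adjacent to the $2d$ faces of $\Lambda_n$, each contributing one of the terms on the right of~\eqref{eq:surf-pressure-identity}. Differentiating the series~\eqref{eqPPpartition} in a scaling parameter gives $t\frac{d}{dt}\log Z(t\bl)=\E_{\mu_{t\bl}}[N]=\int_{\R^d}\rho_{t\bl}(v)\,dv$, where $N$ is the number of points, and since $\log Z(0)=0$ this integrates to $\log Z_{\Lambda_n}(\lambda)=\int_0^1 t^{-1}\int_{\Lambda_n}\rho_{t\lambda\mathbf{1}_{\Lambda_n}}(v)\,dv\,dt$. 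Dividing by $|\Lambda_n|$ and letting $n\to\infty$, the localization below gives $p(\lambda)=\int_0^1 t^{-1}\rho_{t\lambda}(0)\,dt$, with $\rho_{t\lambda}(0)$ the infinite-volume one-point density of~\eqref{eqInfVolDensity} (well defined and translation invariant under SSM, consistent with Theorem~\ref{lem:pressure-dir}). Hence
\[ \log Z_{\Lambda_n}(\lambda)-|\Lambda_n|p(\lambda)=\int_0^1\frac1t\int_{\Lambda_n}\bigl(\rho_{t\lambda\mathbf{1}_{\Lambda_n}}(v)-\rho_{t\lambda}(0)\bigr)\,dv\,dt. \]

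\emph{Localization via SSM.} Since $\phi$ has range $r$, the variable $e^{-H_v(\bX)}\in[0,1]$ depends only on $\bX\cap B(v,r)$, so if two activity functions agree at $v$ with common value $t\lambda$ then $|\rho_{\bl}(v)-\rho_{\bl'}(v)|\le t\lambda\|\mu_{\bl}-\mu_{\bl'}\|_{B(v,r)}\le t\lambda\alpha|B_r|e^{-\beta(\dist(v,\supp(\bl-\bl'))-r)}$. Split $\Lambda_n$ into: (i) the $v$ at distance $\ge\delta_n:=(d+1)\beta^{-1}\log n$ from $\partial\Lambda_n$, where comparing $t\lambda\mathbf{1}_{\Lambda_n}$ with the constant $t\lambda$ gives $|\rho_{t\lambda\mathbf{1}_{\Lambda_n}}(v)-\rho_{t\lambda}(0)|\le t\lambda\alpha|B_r|e^{-\beta(\delta_n-r)}$; (ii) for each face, the slab of width $\delta_n$ adjacent to it but at distance $\ge\delta_n$ from every other face; (iii) an edge/corner set of volume $O(n^{d-2}\delta_n^2)$, where $|\rho_{t\lambda\mathbf{1}_{\Lambda_n}}(v)-\rho_{t\lambda}(0)|\le t\lambda$ since $0\le\rho_{\bl}\le\bl$ for repulsive $\phi$. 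Regions (i) and (iii) each contribute $t\lambda\cdot o(\SA(\Lambda_n))$ to $\int_{\Lambda_n}(\rho_{t\lambda\mathbf{1}_{\Lambda_n}}(v)-\rho_{t\lambda}(0))\,dv$. On a slab of type (ii) adjacent to the face with outward normal $\pm\ve_j$, the activity $t\lambda\mathbf{1}_{\Lambda_n}$ differs from $t\lambda$ times the indicator of the half-space bounded by the hyperplane through that face only within distance $\delta_n$ of some other face, i.e. at distance $\ge\delta_n-r$ from $B(v,r)$; so $\rho_{t\lambda\mathbf{1}_{\Lambda_n}}(v)$ agrees with that half-space density up to $t\lambda\cdot o(1)$, and translating $v$ to the origin and using $\phi(x)=\phi(-x)$ (which carries $\bl_{-\ve_j}$ to $\bl_{\ve_j}$ and $-s\ve_j$ to $s\ve_j$) identifies it with $\rho_{t\bl_{\ve_j}}(s\ve_j)$, where $s=\dist(v,\text{face})\in[0,\delta_n]$. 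Throughout, densities of activities without bounded support are read via the truncated limits~\eqref{eqInfVolDensity}, to which all the SSM bounds apply uniformly.

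\emph{Assembling, and the main obstacle.} Slicing each type-(ii) slab by distance to its face and summing over the $2d$ faces (the two normal to $\pm\ve_j$ contributing equally), the slab part of $\int_{\Lambda_n}(\rho_{t\lambda\mathbf{1}_{\Lambda_n}}(v)-\rho_{t\lambda}(0))\,dv$ is $2(2n)^{d-1}\sum_{j=1}^d\int_0^{\delta_n}(\rho_{t\bl_{\ve_j}}(s\ve_j)-\rho_{t\lambda}(0))\,ds+t\lambda\cdot o(\SA(\Lambda_n))$. Dividing by $\SA(\Lambda_n)=d2^dn^{d-1}=2d(2n)^{d-1}$ and applying $\int_0^1 t^{-1}(\cdot)\,dt$ — in every error term the factor $t\lambda$ cancels the $t^{-1}$, so all errors from (i), (ii), (iii) vanish — then extending the inner $s$-integral to $[0,\infty)$ at cost $\int_{\delta_n}^\infty t\lambda\alpha|B_r|e^{-\beta(s-r)}\,ds=t\lambda\cdot o(1)$, and letting $n\to\infty$, we obtain~\eqref{eq:surf-pressure-identity} (after reversing the order of integration). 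The exchange of $\lim_n$ with $\int_0^1 t^{-1}\,dt$ and the use of Fubini are justified by the a priori bound $\int_0^\infty|\rho_{t\bl_{\ve_j}}(s\ve_j)-\rho_{t\lambda}(0)|\,ds\le t\lambda(r+\alpha|B_r|/\beta)$ from the SSM estimate, which dominates the integrands by $\lambda(r+\alpha|B_r|/\beta)$, constant in $t$ on $(0,1)$ and integrable in $s$. The substantive step is the localization: the geometric split of $\Lambda_n$ and, above all, verifying that near a face-slab point $t\lambda\mathbf{1}_{\Lambda_n}$ lies within SSM range of the correct half-space activity, together with the routine but necessary truncation arguments for the densities $\rho_{t\bl_{\ve_j}}$ and the interchange of $\lim_n$ with the improper integral $\int_0^1 t^{-1}\,dt$. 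Once this is in place, identifying the limit with $\rho_{t\bl_{\ve_j}}(s\ve_j)$ is merely a translation plus the evenness of $\phi$, and the cavity identity itself is elementary.
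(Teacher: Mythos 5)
Your proof is correct and follows essentially the same route as the paper: the paper proves the more general Proposition~\ref{pr:polytope} for convex polytopes via Lemma~\ref{lem:interpolate-to-zero} (the uniform $t$-interpolation cavity identity) together with SSM-localization onto face-slabs with edge/corner error control, and then reads off Theorem~\ref{lem:surface-pressure} as the box case, whereas you carry out the same decomposition directly for the box. The key ingredients (the identity $\log Z_{\Lambda_n}(\lambda)=\int_0^1 t^{-1}\int_{\Lambda_n}\rho_{t\lambda}\,dv\,dt$, the bound $|\rho_{\bl}(v)-\rho_{\bl'}(v)|\le\bl(v)\|\mu_{\bl}-\mu_{\bl'}\|_{B(v,r)}$ via locality of the added energy, and the face-slab/interior/corner split) match the paper's argument exactly.
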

Here and throughout we write $\{\vec{e}_j\}_{j = 1}^n$ for the standard basis of $\R^n$.  By applying a rotation, we note that Theorem \ref{lem:surface-pressure} holds for any choice of orthonormal basis.  
Note also that by \eqref{eq:added-energy} the integrand of \eqref{eq:surf-pressure-identity} is uniformly bounded by $\lambda$; under the assumption of strong spatial mixing, the integrand decays exponentially in $s$ uniformly in $t$, and so this integral converges.
In Section~\ref{secSurface} we derive further identities for the surface pressure and compute  the limiting surface pressure when taking limits along other shapes, such as spheres and dilations of convex polytopes.  In particular, Theorem \ref{lem:surface-pressure} is a special case of Proposition \ref{pr:polytope}, which computes the surface pressure along a convex polytope.    In the case when $\phi$ is spherically symmetric,  these limits are the same and independent of the polytope.

Finite-volume corrections to the pressure were  studied by Fisher and Lebowitz~\cite{fisher1970asymptotic} who showed that taking the limit in~\eqref{eqPressureDef} along a sequence of boxes with periodic boundary conditions (a sequence of torii) yields the same limiting pressure.  
Fisher and Caginalp~\cite{fisher1977wall} proved the existence of the surface pressure for ferromagnetic lattice systems.  The questions of surface pressure and finite-volume corrections have subsequently been studied in great detail in lattice systems, usually using either specific properties of a model like the dimer or Ising models, or using a convergent cluster expansion (e.g.,~\cite{fisher1961statistical,ferdinand1967statistical,fisher1967interfacial,caginalp1979wall,bricmont1980surface,frohlich1987semi,borgs1990rigorous,borgs1991finite}).

For classical gasses in the continuum, much less is known.  Existence of the surface pressure and an infinite series for its value follows from a convergent cluster expansion (e.g.~\cite{ueltschi2004cluster}).  Pulvirenti and Tsagkarogiannis~\cite{pulvirenti2015finite} prove an upper bound proportional to surface area on finite-volume corrections to the pressure in the canonical ensemble using the respective cluster expansion.   To the best of our knowledge, Theorem~\ref{lem:surface-pressure} is the first to establish the existence of the surface pressure for classical gasses beyond the regime of cluster expansion convergence.

Crucially, the identities in Theorems \ref{lem:pressure-dir} and \ref{lem:surface-pressure}  are given in terms of one-point densities which can be estimated via random sampling.  This allows us to obtain randomized algorithms to approximate the pressure and surface pressure in Theorem~\ref{thm:pressure-algs} below.

Finally, we see that strong spatial mixing implies that on the $d$-dimensional (flat) torus the finite volume pressure converges to the infinite volume pressure exponentially quickly.  We note that this also recovers the  fact that the infinite volume pressure  on the torus is equal to that of euclidean space, which was proven in~\cite{fisher1970asymptotic} without the assumption of strong spatial mixing.  Let $\TT_n^d :=  \R^d / (n \Z)^d$ denote the $d$-dimensional torus of sidelength $n$. 
	
	\begin{theorem}\label{lem:sp-torus}
		Let $\lambda > 0$ so that strong spatial mixing holds for all  $\bl \leq \lambda$.  Then $$\log Z_{\TT_n^d}(\lambda) = n^d p(\lambda) + O(e^{-\Omega(n)})\,.$$
	\end{theorem}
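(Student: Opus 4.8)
The plan is to write $\log Z$ as an integral of one-point densities along a scaling of the activity, use translation-invariance of the torus to reduce everything to comparing a single one-point density on $\TT_n^d$ with the infinite-volume density on $\R^d$, and control that comparison with strong spatial mixing together with the finite range of $\phi$.

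\emph{Step 1 (scaling identity).} For any bounded measurable region $\Delta$, differentiating $Z_\Delta(t\lambda)=\sum_{k\ge0}\frac{(t\lambda)^k}{k!}\int_{\Delta^k}e^{-H}\,dx$ in $t$ shows that $\frac{d}{dt}\log Z_\Delta(t\lambda)$ equals $\frac1t$ times the expected number of points under $\mu_{\Delta,t\lambda}$, which is $\frac1t\int_\Delta \rho_{t\lambda\one_\Delta}(v)\,dv$; integrating over $t\in[0,1]$ and using $Z_\Delta(0)=1$ gives $\log Z_\Delta(\lambda)=\int_0^1\frac1t\int_\Delta\rho_{t\lambda\one_\Delta}(v)\,dv\,dt$. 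Running the same computation on the torus (which carries a genuine finite-range Gibbs point process once $n$ exceeds the range $r$) and using that the one-point density $\rho^{\TT_n^d}_{t\lambda}$ is constant by translation invariance gives $\log Z_{\TT_n^d}(\lambda)=n^d\int_0^1\frac{\rho^{\TT_n^d}_{t\lambda}}{t}\,dt$. On the Euclidean side, applying Definition~\ref{defSSM} to the activity functions $t\lambda\one_{\Lambda_n}$ and $t\lambda\one_{B(0,N)}$ and letting $N\to\infty$ gives $\rho^{\Lambda_n}_{t\lambda}(v)=\rho_{t\lambda}(0)+O(e^{-\beta(\mathrm{dist}(v,\partial\Lambda_n)-r)})$ with implied constant uniform in $t$ and $v$; feeding this into the box version of the scaling identity and sending $n\to\infty$ (the computation underlying Theorem~\ref{lem:pressure-dir}) yields $p(\lambda)=\int_0^1\frac{\rho_{t\lambda}(0)}{t}\,dt$. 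Subtracting, $\log Z_{\TT_n^d}(\lambda)-n^d p(\lambda)=n^d\int_0^1\frac{\rho^{\TT_n^d}_{t\lambda}-\rho_{t\lambda}(0)}{t}\,dt$.

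\emph{Step 2 (density comparison).} Fix $t\in(0,1]$, a point $v\in\TT_n^d$, set $\ell=n/3$, and assume $n>9r$. Then $B_\TT(v,\ell+r)$ is isometric to the Euclidean ball $B(0,\ell+r)$, and since $2(\ell+r)<n-r$ the torus pair interaction between any two of its points agrees with the Euclidean one; so by the spatial Markov property of a finite-range Gibbs measure, conditioned on the configuration $Y$ in the annulus $B_\TT(v,\ell+r)\setminus B_\TT(v,\ell)$, the restriction of the torus Gibbs measure to $B_\TT(v,\ell)$ equals, under the isometry, the Euclidean Gibbs measure $\mu_{\bl''}$ with $\bl''(x)=t\lambda\,\one_{B(0,\ell)}(x)\,e^{-\sum_{y\in Y}\phi(x-y)}$. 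Here $\bl''\le t\lambda$ and $\bl''$ differs from $t\lambda\one_{B(0,\ell)}$ only on $\{|x|\ge\ell-r\}$, so Definition~\ref{defSSM} (with constants $\alpha,\beta$ depending only on $\phi,\lambda$) gives $\|\mu_{\bl''}-\mu_{t\lambda\one_{B(0,\ell)}}\|_{B(0,r)}\le\alpha|B(0,r)|e^{-\beta(\ell-2r)}$ uniformly in $Y$; averaging over $Y$ and using convexity of total variation distance, the law of the torus process restricted to $B_\TT(v,r)$ is within $O(e^{-\Omega(n)})$ of $\mu_{t\lambda\one_{B(0,\ell)}}$ on $B(0,r)$. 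The same bound (comparing $t\lambda\one_{B(0,\ell)}$ with $t\lambda\one_{B(0,N)}$, $N\to\infty$) shows the infinite-volume measure $\mu_{t\lambda}$, well defined by strong spatial mixing, is within $O(e^{-\Omega(n)})$ of $\mu_{t\lambda\one_{B(0,\ell)}}$ on $B(0,r)$. Since the added energy $H_v(\X)=\sum_{x\in\X}\phi(v-x)$ depends only on $\X\cap B(v,r)$ and $e^{-H_v}\in[0,1]$ by repulsiveness, we conclude $|\rho^{\TT_n^d}_{t\lambda}-\rho_{t\lambda}(0)|=t\lambda\,|\E e^{-H_v}-\E e^{-H_0}|=O(e^{-\Omega(n)})$, uniformly in $t\in(0,1]$.

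\emph{Step 3 (conclusion and main obstacle).} Both densities lie in $[0,t\lambda]$, so $|\rho^{\TT_n^d}_{t\lambda}-\rho_{t\lambda}(0)|\le t\lambda$; combined with Step 2 the integrand of $\int_0^1\frac{\rho^{\TT_n^d}_{t\lambda}-\rho_{t\lambda}(0)}{t}\,dt$ is at most $\min\{\lambda,\,t^{-1}Ce^{-cn}\}$, whose integral over $t\in[0,1]$ is $O(ne^{-cn})$; multiplying by $n^d$ gives $\log Z_{\TT_n^d}(\lambda)-n^d p(\lambda)=O(n^{d+1}e^{-cn})=O(e^{-\Omega(n)})$, and the finitely many $n\le9r$ are absorbed by enlarging the constant. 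I expect the only genuine work to be Step 2: handling the torus geometry correctly (the injectivity radius $n/2$, the vanishing of wrap-around interactions for $n>9r$, and the unfolding isometry) and verifying that the conditional law of the torus process on a ball is an honest mixture of Euclidean boundary-condition Gibbs measures so that Definition~\ref{defSSM} applies with its uniform constants; Step 1 (including the integral formula for $p(\lambda)$, which mirrors the proof of Theorem~\ref{lem:pressure-dir}) and the $t\to0$ endpoint are routine.
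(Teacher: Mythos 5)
Your proposal is correct and follows essentially the same route as the paper: write $\log Z_{\TT_n^d}$ via the scaling identity of Lemma~\ref{lem:interpolate-to-zero}, pull out the factor $n^d$ by translation invariance of the torus, identify $p(\lambda)=\int_0^1\rho_{t\lambda}(0)/t\,dt$ (Lemma~\ref{lem:pressure-second-identity}), and then bound $|\rho_{\TT_n^d,t\lambda}(0)-\rho_{t\lambda}(0)|$ by strong spatial mixing. Where the paper says ``by strong spatial mixing'' and passes through the intermediate box $[-n/4,n/4]^d$ in one line, you supply the missing detail that Definition~\ref{defSSM} is stated for $\R^d$, not for the torus, so one must unfold $B_\TT(v,\ell)$ into a Euclidean ball and use the spatial Markov property to express the restricted torus measure as a mixture of Euclidean Gibbs measures with fixed boundary conditions; this is exactly the content the paper leaves implicit, and your Step~2 is the right way to make it rigorous. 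One small simplification you could make: since $\rho_{\Delta,t\lambda}(v)=t\lambda\,\E_{\Delta,t\lambda}e^{-H_v(\X)}$, your Step~2 actually gives $|\E e^{-H_v}-\E e^{-H_0}|=O(e^{-\Omega(n)})$ uniformly in $t$, so $|\rho_{\TT_n^d,t\lambda}-\rho_{t\lambda}(0)|/t\le\lambda\,O(e^{-\Omega(n)})$ directly and the $\min\{\lambda,\,t^{-1}Ce^{-cn}\}$ device (which costs an extra factor of $n$) is unnecessary — but this does not affect the validity of the argument.
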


\subsection{Algorithmic results}
\label{secBlockDynamics}

Let $\Lam_n$ be the centered, axis-parallel box  $[-n,n]^d \subset \R^d$.  Fix a pair potential $\phi$ and an activity function $\bl$ supported on $\Lam_n$.  Let $Z_{\Lam_n}(\bl)$ and $\mu_{\Lam_n,\bl}$ denote the corresponding partition function and Gibbs point process.    The \emph{block dynamics} for $\mu_{\Lam_n,\bl}$ with update radius $L>0$  is a Markov chain on the space of finite point sets in $\Lam_n$ with the following update rule.  Given a configuration $X_t \subset \Lam_n$, form the configuration $X_{t+1}$ by:
\begin{enumerate}
\item Picking an update location $y \in \Lam_n$ uniformly at random.
\item Resampling the points in the ball $B_L(y)$ according to $\mu_{\Lam_n,\bl}$ with boundary conditions given by $X_t \setminus B_L(y)$.
\end{enumerate}
We say the block dynamics are \textit{local} if the update radius $L$ is bounded independent of  $n$. 

To be precise about the resampling step, let $H_{L,y}^{X_t}$ be the function on finite point sets in $B_L(y)$ defined by
\[ H_{L,y}^{X_t} (x_1, \dots , x_k) = \sum_{1 \le i < j \le k} \phi(x_i -x_j)  + \sum_{i=1}^k \sum_{x \in X_t \setminus B_L(y)} \phi(x_i - x) \, .\]
Then resampling according to $\mu_{\Lam_n,\bl}$ with boundary conditions given by $X_t \setminus B_L(y)$ means sampling a point process $Y$ on $B_L(y)$ with density $e^{-  H_{L,y}^{X_t}}$ against the Poisson process of intensity $\bl$ on $B_L(y)$ and setting $X_{t+1} = (X_t \setminus B_L(y) ) \cup Y$.

The block dynamics are reversible with respect to the  stationary distribution $\mu_{\Lam_n, \bl}$.  For $j =0 ,1, \dots$ let $\mu_j$ denote the distribution of the configuration $X_j$ under the block dynamics.  Then for any initial distribution $\mu_0$,  $\lim_{j \to \infty} \| \mu_j - \mu_{\Lam_n,\bl} \|_{TV} =0$ since the block dynamics are irreducible and aperiodic.  To measure the rate of convergence we use the mixing time:
\[ \tau_{\mathrm{mix}} (\eps) = \sup_{\mu_0} \,  \min \{ j \ge 0 : \| \mu_j - \mu_{\Lam_n,\bl} \|_{TV} < \eps\} \,,\]
where the supremum is over probability distributions on finite point sets in $\Lam_n$.

By showing that  strong spatial mixing implies fast mixing of block dynamics (Theorem~\ref{thmSSMtoMix} below, adapted from~\cite{dyer2004mixing,helmuth2020correlation}), we deduce the following theorem.
\begin{theorem}
\label{thmFastMixing}
For every finite-range, repulsive potential $\phi$ and every $\lam < e/\Delta_{\phi}$, there exists $L_0>0$ so that the block dynamics with update radius $L \ge L_0$ for the Gibbs point process with pair potential $\phi$ and activity function $\bl$ bounded by $\lam$ and supported on $\Lam_n$ has mixing time $\tau_{\mathrm{mix}}(\eps) = O( N \log (N/\eps) )$ where $N = |\Lam_n| = (2n)^d$.  
\end{theorem}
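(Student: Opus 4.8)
The plan is to deduce Theorem~\ref{thmFastMixing} from Theorem~\ref{thmSSM} together with the general implication ``strong spatial mixing $\Rightarrow$ fast mixing of local block dynamics'' (Theorem~\ref{thmSSMtoMix}, a continuum version of the arguments of~\cite{dyer2004mixing} in the spirit of~\cite{helmuth2020correlation}). Since $\lam < e/\Delta_\phi$, Theorem~\ref{thmSSM} supplies constants $\alpha,\beta>0$ for which~\eqref{eqSSMdef} holds for \emph{every} pair of activity functions bounded by $\lam$. The key point making this applicable to the block dynamics is that for a repulsive potential this class is closed under imposing point boundary conditions: replacing $\bl(x)$ by $\bl(x)e^{-\sum_j\phi(x-y_j)}$ only decreases it pointwise, and the resampling step on a ball $B_L(y)$ with two different exterior configurations is nothing but a pair of Gibbs measures on $B_L(y)$ whose activity functions are $\le\lam$ and differ only where the two exteriors impose different boundary effects, i.e.\ only within distance $r$ (the range of $\phi$) of the symmetric difference of the two exteriors.

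The heart of the argument is a one-step contraction along a path coupling. Between two configurations differing in a single point $v$, run two copies $(X_t,X_t')$ of the block dynamics with a common uniform update center $y\in\Lam_n$, and couple the two resamplings on $B_L(y)$ by an optimal coupling of the two boundary-conditioned Gibbs measures whose leftover disagreement is concentrated near the part of $\partial B_L(y)$ that actually sees the discrepancy and decays exponentially into $B_L(y)$ at rate $\beta$ --- exactly the content of~\eqref{eqSSMdef}. Track the disagreement set $D_t$ (the points where $X_t$ and $X_t'$ differ, which stays contained in a ball of radius $O(L)$). A single step has three outcomes: (i) if $v\in B_L(y)$ then the two exteriors agree, the resampling is coupled identically, and $D_{t+1}=\varnothing$ --- this has probability $\Theta(1/N)$ since $B_L(y)$ has constant volume; (ii) if $B_{L+r}(y)$ misses $v$ the resampling is again coupled identically and $D_{t+1}$ is unchanged; (iii) in the intermediate annular event, of probability $\Theta(1/N)$, the two exteriors differ and the coupling may create new disagreements inside $B_L(y)$, but only within distance $r$ of $\partial B_L(y)$ and with probability exponentially small in the depth. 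Weighting each disagreeing point by an exponential in its ``depth'' inside the most recent update ball to form a potential $\Psi(X,X')\ge\one[X\ne X']$, and choosing $L_0$ large enough (as a function of $\alpha,\beta,r,d$, not of $n$) that the exponential decay in~(iii) dominates the $O(L^{d-1})$ surface of $\partial B_L(y)$, yields $\E[\Psi(X_{t+1},X_{t+1}')\mid X_t,X_t']\le(1-c/N)\,\Psi(X_t,X_t')$ for a constant $c=c(\phi,\lam)>0$, uniformly over $L\ge L_0$.

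Standard path coupling then turns this per-step contraction into a mixing-time bound of order $N\log(\Psi_{\max}/\eps)$. Since the configuration space is noncompact, the remaining issue is that $\Psi_{\max}$ is a priori infinite; this is resolved by a short burn-in. Because $\phi$ is repulsive, the block dynamics is stochastically dominated by the block dynamics for the Poisson process of intensity $\lam$ on $\Lam_n$ (which resamples each ball to an independent Poisson configuration), and the latter reaches its stationary distribution as soon as every ball has been updated, which by a coupon-collector estimate happens within $O(N\log(N/\eps))$ steps with probability $1-\eps/2$; on that event the point count of $X_t$ is $O(N)$. Restricting to this event makes $\Psi_{\max}$ polynomial in $N$, so $\log(\Psi_{\max}/\eps)=O(\log(N/\eps))$ and the total mixing time is $O(N\log(N/\eps))$, as claimed.

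I expect the main obstacle to be outcome~(iii), the partial-overlap analysis: one must check that the disagreements sporadically spawned near the boundary of an update ball do not, once propagated over many steps and summed over all positions of $y$, accumulate faster than the elimination events in~(i) destroy them. This is exactly where genuine (exponential, not merely summable) strong spatial mixing and the freedom to take $L$ a large constant are used --- the decay must beat the $(d-1)$-dimensional growth of the update surface --- and it is also where one must be careful, using finite range and the fact that the two resamplings agree off a bounded collar in cases~(i)--(ii), that $D_t$ remains a bounded ball rather than spreading across $\Lam_n$.
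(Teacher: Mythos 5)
Your overall strategy matches the paper's: reduce to a general ``strong spatial mixing $\Rightarrow$ fast mixing of block dynamics'' statement (the paper's Theorem~\ref{thmSSMtoMix}) and prove that via path coupling with the same three-case decomposition of the random update ball (disagreement fully inside, fully outside, or in the intermediate annulus), using the exponential decay from~\eqref{eqSSMdef} to handle the annular case after splitting the update ball into a far part and a near part, and taking $L$ a large constant so the decay beats the $O(L^{d-1})$ growth of the annulus.

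Where you depart from the paper---and where there is a genuine gap---is in the choice of metric for the path coupling, and the resulting need for a burn-in. You propose a potential $\Psi(X,X')$ that weights each disagreeing point ``by an exponential in its depth inside the most recent update ball.'' As written this is not a metric on configuration space at all: the weights depend on the history of the chain, not only on the pair $(X,X')$, so Lemma~\ref{lemPathCouple} (which requires a fixed pre-metric on $\Omega$) does not apply. If instead you drop the depth weighting and use the number of disagreeing points, the metric has unbounded diameter (a configuration can have arbitrarily many points when the potential has no hard core), which is precisely the difficulty you then try to patch with stochastic domination and a coupon-collector burn-in. That patch is itself delicate: the update center is drawn from the continuum, so ``every ball has been updated'' needs a discretization; the domination is at the level of the resampled measures, not obviously at the level of the coupled process you actually analyze; and the mixing time must be a supremum over \emph{all} initial configurations, including ones with arbitrarily many points, so ``restricting to the good event'' needs to be woven into the TV bound rather than asserted. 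The paper avoids all of this by making a different choice of metric: two configurations $X,Y$ are declared adjacent if $X\triangle Y$ is contained in a single ball of radius $r$ (the range of $\phi$), with pre-metric $\hat D\equiv 1$, so $D(X,Y)$ is the minimum number of radius-$r$ balls needed to cover $X\triangle Y$. Since $\Lam_n$ can be covered by $C_d N$ such balls, the diameter is automatically $O(N)$ regardless of how many points any configuration contains, and the deterministic bound on the new disagreements inside the near region $A$ becomes $O(|A|)=O(t^d)$ in this metric rather than an unbounded point count. This is the decisive idea (the appendix flags it explicitly), and it is missing from your proposal. Your three-case contraction analysis and the choice of $L_0$ large are sound in spirit, but you should replace the history-dependent potential (and the burn-in) with the ball-covering metric; then the proof closes in one pass of the path coupling lemma.
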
	
The lower bound $L_0$ on the update radius  depends  on $\phi$ and $\lam$ and is independent of $n$.  To the best of our knowledge this is the best bound on the range of parameters for fast mixing of a local Markov chain for any repulsive point process.   In the case of the Strauss process~\cite{huber2012spatial} and hard sphere model~\cite{helmuth2020correlation}  this improves the known bounds on $\lam$ for efficient approximate sampling by a factor at least $e/2$ (though the  algorithm in~\cite{huber2012spatial} is a perfect sampler).  The restriction to boxes $\Lam_n$ is not essential and other regions can be obtained simply by changing $\bl$; some regularity in the shape of the region is necessary to obtain mixing time bounds as a function of volume (see the discussion in~\cite{helmuth2020correlation}).

Theorem~\ref{thmFastMixing} gives an efficient sampling algorithm for $\mu_{\Lam_n,\bl}$ under the assumption that we can implement a single step of the block dynamics efficiently.  In the case of the constant activity function $\lam$ (and in a model of real-valued computation such as that of~\cite{blum1989theory}, which is necessary to obtain total variation bounds for point processes), this can be done via acceptance--rejection sampling: sample a Poisson process $Y$ of intensity $\lam$ on $B_L(y)$ and accept with probability $e^{-H_{L,y}^{X_t}(Y)}$; if $Y$ is rejected, resample and repeat.  Recall that to perfectly sample a inhomogeneous Poisson process of intensity $\bl$ bounded by $\lambda_0$, we may sample a homogeneous Poisson process $Y$ of intensity $\lambda_0$ and delete each point $y \in Y$ independently with probability $1 - \bl(y)/\lambda_0$.  Thus the update step in the general case can be performed efficiently assuming constant-time query access to $\bl(y)$.

We next turn to the approximate counting problem. An $\eps$-relative approximation to $Z$ is a number $\hat Z$ so that $e^{-\eps} \hat Z \le Z \le e^{\eps} \hat Z$.   In the setting of discrete spin models, a \textit{fully polynomial-time randomized approximation scheme} (FPRAS) is an algorithm that given a graph $G$ on $N$ vertices and an error parameter $\eps>0$ outputs an $\eps$-relative approximation to $Z_G$ with probability at least $3/4$ and runs in time polynomial in $N$ and $1/\eps$.     
In our setting the volume  $|\Lam_n|$ will replace the number of vertices of a graph as the measure of the size of an instance.

Based on the sampling algorithm of Theorem~\ref{thmFastMixing},  we give an efficient approximation algorithm for the partition function when activities are bounded by $\lam < e/\Delta_{\phi}$ (assuming efficient implementation of a single step of the block dynamics). 
\begin{cor}
\label{thmApproxAlg}
For every finite-range, repulsive potential $\phi$ and every activity function $\bl$ on $\Lam_n$ bounded by $\lam < e/\Delta_{\phi}$, there is a randomized algorithm  that with probability at least $3/4$ returns an $\eps$-relative approximation to  $Z_{\Lam_n}(\bl)$.  
The algorithm runs in time $O(N^{3} \eps^{-2}  \log(N/\eps))$ where $N=|\Lam_n|$,  assuming unit cost for a single radius-$L$ block dynamics update, for  $L>0$ independent of $n$.
\end{cor}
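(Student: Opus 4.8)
The plan is to use the standard reduction from approximate counting to approximate sampling via a telescoping product of ratios of partition functions, combined with the fast-mixing sampler of Theorem~\ref{thmFastMixing}. Concretely, I would interpolate between the trivial partition function $Z(0) = 1$ and the target $Z_{\Lam_n}(\bl)$ by scaling the activity function: for $t \in [0,1]$ set $\bl_t = t \bl$, so that $Z_{\Lam_n}(\bl_0) = 1$ and $Z_{\Lam_n}(\bl_1) = Z_{\Lam_n}(\bl)$. Choosing a partition $0 = t_0 < t_1 < \dots < t_m = 1$, we write
\begin{equation*}
Z_{\Lam_n}(\bl) = \prod_{i=1}^m \frac{Z_{\Lam_n}(\bl_{t_i})}{Z_{\Lam_n}(\bl_{t_{i-1}})}\,.
\end{equation*}
Each ratio is a quantity we can estimate by Monte Carlo: differentiating $\log Z_{\Lam_n}(\bl_t)$ in $t$ gives $\frac{d}{dt} \log Z_{\Lam_n}(\bl_t) = \frac{1}{t}\int_{\Lam_n} \rho_{\bl_t}(v)\,dv = \frac{1}{t}\,\E_{\mu_{\Lam_n,\bl_t}}|\bX|$, the expected number of points, which is bounded by $\lam |\Lam_n| = \lam N$. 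Hence $\log(Z_{\Lam_n}(\bl_{t_i})/Z_{\Lam_n}(\bl_{t_{i-1}}))$ is controlled, and with $m = O(N)$ equally-spaced (or geometrically-spaced near $0$) interpolation points each log-ratio is $O(1)$; equivalently each ratio $Z_{\Lam_n}(\bl_{t_i})/Z_{\Lam_n}(\bl_{t_{i-1}})$ lies in a bounded range, so a bounded-variance unbiased estimator exists. The natural estimator for the ratio is obtained by writing it as an expectation over $\mu_{\Lam_n,\bl_{t_{i-1}}}$ of a reweighting factor (a ``simulated annealing'' / Jerrum--Sinclair-style identity), sampling from $\mu_{\Lam_n,\bl_{t_{i-1}}}$ approximately using the block dynamics of Theorem~\ref{thmFastMixing}, and averaging.

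The key steps in order are: (i) verify the telescoping identity and the boundedness of each factor, using the derivative formula for $\log Z$ and the uniform bound $\rho_{\bl_t} \le \lam$; (ii) exhibit the Monte Carlo estimator for a single ratio and bound its variance — here boundedness of the reweighting random variable (it is of the form $e^{-(\text{something})}$ with the exponent controlled, or more simply a ratio of densities bounded above and below on the relevant scale) gives relative variance $O(1)$ per sample, so $O(\eps^{-2} m)$ samples suffice overall by a standard second-moment/Chebyshev argument with median amplification to boost the success probability to $3/4$; (iii) account for the bias from approximate rather than perfect sampling: run the block dynamics for $\tau_{\mathrm{mix}}(\eps')$ steps with $\eps' = \mathrm{poly}(\eps/N)$ so that the total variation error, accumulated over all samples and all $m$ stages, contributes at most $\eps/2$ to the final relative error; (iv) tally the running time: $m = O(N)$ stages, $O(N^2 \eps^{-2})$ samples per stage (to get per-stage relative error $O(\eps/\sqrt{m}) = O(\eps/\sqrt N)$ so errors add up to $\eps$), and each sample costs $\tau_{\mathrm{mix}}(\eps') = O(N \log(N/\eps))$ block-dynamics updates by Theorem~\ref{thmFastMixing}, for a total of $O(N^3 \eps^{-2} \log(N/\eps))$ updates, which is the claimed bound (with unit cost per update).

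I expect the main obstacle to be the variance analysis in step (ii): one must choose the interpolation schedule and the precise form of the ratio estimator so that the estimated random variable is bounded (or has a well-controlled exponential moment) uniformly in $n$ and in the stage index. Because the activity can be as large as $\lam$ and the number of points in a block is Poisson-like, naive estimators for $Z_{\Lam_n}(\bl_{t_i})/Z_{\Lam_n}(\bl_{t_{i-1}})$ can have heavy tails; the fix is to take $t_i$ spaced so that consecutive expected particle numbers differ by $O(1)$ (geometric spacing near $t=0$, where $\E|\bX| \to 0$, and uniform spacing of width $\Theta(1/N)$ once $\E|\bX| = \Theta(N)$), which keeps each ratio in a bounded multiplicative window and hence each per-sample estimator bounded. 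A secondary technical point is the bias bookkeeping in step (iii): since we only have a total-variation guarantee on the sampler, the reweighting functional must itself be bounded for the TV error to translate into a bounded error on the estimated ratio — this is automatic once the schedule is chosen as above. Everything else (Chebyshev, median trick, union bound over stages) is routine.
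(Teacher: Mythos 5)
Your plan takes a genuinely different route from the paper.  The paper does a \emph{spatial} telescoping: it partitions $\Lam_n$ into $N$ unit boxes $S_1,\dots,S_N$, writes $Z_{\Lam_n}^{-1}=\mu_{\Lam_n}(\mathbf X=\emptyset)=\prod_{j=0}^{N-1}\mu_{\Lam^{(j)}}(\mathbf X\cap S_{j+1}=\emptyset)$ with $\Lam^{(j)}=\Lam_n\setminus(S_1\cup\cdots\cup S_j)$, and estimates each factor by the sample average of the indicator of $\{\mathbf X\cap S_{j+1}=\emptyset\}$ over approximate draws from $\mu_{\Lam^{(j)}}$.  Because that estimator is an indicator (bounded by $1$) and each factor is bounded below by $e^{-\lam}$ via Poisson domination, the relative-variance bound and the transfer of total-variation sampler error to estimator error are both immediate.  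Your activity-annealing telescope is an equally standard alternative, but it does not get these two facts for free, and that is exactly where your writeup has a gap.

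The gap is in steps (ii)--(iii): the natural Jerrum--Sinclair estimator for the forward ratio is $(t_i/t_{i-1})^{|\mathbf X|}$ with $\mathbf X\sim\mu_{\bl_{t_{i-1}}}$ and $t_i>t_{i-1}$, and since $|\mathbf X|$ has Poisson-type tails this random variable is \emph{not} bounded; your claim that the reweighting functional ``is bounded'' (and that this is ``automatic once the schedule is chosen'') is false as stated, and it is precisely what you invoke to push total-variation sampler error into error on the ratio.  The fix is either to truncate and control the tail, or (cleaner) to estimate the reciprocal, $Z(\bl_{t_{i-1}})/Z(\bl_{t_i})=\E_{\mu_{\bl_{t_i}}}(t_{i-1}/t_i)^{|\mathbf X|}$, which \emph{is} bounded by $1$; one then needs to lower bound its mean (e.g.\ linear spacing $t_i=i/(\lam N)$ gives a mean bounded below by an absolute constant, by Jensen and Poisson domination), so the schedule choice is not a detail but the crux.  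Finally there is an arithmetic slip in step (iv): $O(N^2\eps^{-2})$ samples per stage over $m=O(N)$ stages, each sample costing $O(N\log(N/\eps))$ updates, yields $O(N^4\eps^{-2}\log(N/\eps))$, not the claimed $O(N^3\eps^{-2}\log(N/\eps))$.  You want $O(N\eps^{-2})$ samples per stage, hence $O(N^2\eps^{-2})$ samples total, which is what the paper uses.
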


For the special case of the hard sphere model,  Friedrich,  G{\" o}bel,  Krejca, and 
Pappik~\cite{friedrich2021spectral} recently gave an FPRAS for the partition function $Z_{\Lam_n}(\lam)$ when $\lam < e/C_{\phi}$, a more restrictive range of $\lam$ than that of Theorem~\ref{thmApproxAlg}.  Their algorithm is based on discretizing $\Lam_n$ and using a randomized algorithm to sample from a hard-core model.  The running time of their algorithm is $N^{O(1/\del^2)}$ where $\del = e/C_{\phi} - \lam$, while the  algorithm of Corollary~\ref{thmApproxAlg} runs in time $\tilde O(N^3)$, where the implied constant is a function of $\delta' = e/\Delta_{\phi} - \lam$.

We can combine our algorithmic results with the results of Section~\ref{subsecPressure} to obtain approximation algorithms for the pressure and surface pressure.  An $\eps$-additive approximation to a real number $p$ is a real number $\hat p$ so that $| p - \hat p|\le \eps$. 
\begin{theorem}\label{thm:pressure-algs}
		For every finite-range, repulsive potential $\phi$ and every $\lambda < e/\Delta_\phi$, there is a randomized algorithm that with probability at least $3/4$ returns an $\eps$-additive approximation to $p(\lambda)$.  The algorithm runs in time $O(\eps^{-2} \log^{d+1}(1/\eps))$ assuming unit cost for a single radius-$L$ block dynamics update for fixed $L > 0$. 
		
	Additionally, there is a randomized algorithm  that with probability at least $3/4$ returns an $\eps$-additive approximation to the surface pressure along a box that runs in time $O(\eps^{-2} \log^{d + 3}(1/\eps) )$.
	\end{theorem}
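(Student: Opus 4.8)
The plan is to convert the one-point-density identities of Theorems~\ref{lem:pressure-dir} and~\ref{lem:surface-pressure} into Monte Carlo estimators. Since $\lambda < e/\Delta_\phi$, Theorem~\ref{thmSSM} gives strong spatial mixing for all activities $\le\lambda$, so those identities apply, and Theorem~\ref{thmFastMixing} gives block dynamics mixing in $\tau_{\mathrm{mix}}(\eps) = O(N\log(N/\eps))$ updates on any box with activity $\le\lambda$. The guiding observation is that every quantity in the identities is a \emph{local} functional of a Gibbs measure: for instance $\rho_{\bl_{\vu}}(0) = \lambda\,\E_{\mu_{\bl_{\vu}}}e^{-H_0(\X)}$, and since $\phi$ has range $r$ the added energy $H_0(\X)=\sum_{x\in\X}\phi(x)$ depends only on $\X\cap B_r(0)$, with $e^{-H_0}\in[0,1]$. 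Thus, by strong spatial mixing, replacing the (possibly unbounded-support) activity function by its restriction to a box of side $m = O(\log(1/\eps))$ changes each such expectation by at most $\eps$; on that box, of volume $N = O(\log^d(1/\eps))$, we would produce samples by running the block dynamics for $\tau_{\mathrm{mix}}(\eps) = O(\log^{d+1}(1/\eps))$ updates and average the relevant $[0,1]$-valued statistic. This is what keeps the running times polynomial in $1/\eps$ with only polylogarithmic dependence on the geometry, rather than routing through $\tfrac{1}{N}\log Z_{\Lam_n}$ on a box of size $\mathrm{poly}(1/\eps)$.

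\emph{Pressure.} Fix $\vu\in\S^{d-1}$, say $\vu = \ve_1$, so that $p(\lambda) = \rho_{\bl_{\ve_1}}(0) = \lambda\,\E_{\mu_{\bl_{\ve_1}}}e^{-H_0(\X)}$ by Theorem~\ref{lem:pressure-dir}. Set $\bl = \bl_{\ve_1}\cdot\one_{[-m,m]^d}$ with $m = C\log(1/\eps)$; since $\bl_{\ve_1}$ and $\bl$ agree on $B_r(0)$ and their difference is supported outside $[-m,m]^d$, Definition~\ref{defSSM} applied with region $B_r(0)$ gives $|\E_{\mu_{\bl_{\ve_1}}}e^{-H_0} - \E_{\mu_{\bl}}e^{-H_0}| \le \alpha|B_r|e^{-\beta(m-r)} < \eps$ for a suitable $C$. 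We would then run $K = O(\eps^{-2})$ independent block-dynamics chains on $[-m,m]^d$ with activity $\bl$ and update radius $L \ge L_0$ (the constant of Theorem~\ref{thmFastMixing}), each for $\tau_{\mathrm{mix}}(\eps)$ updates, and output the empirical mean of $\lambda e^{-H_0(\cdot)}$; by Hoeffding's inequality, and accounting for the $O(\eps)$ truncation and mixing biases, this lies within $O(\eps)$ of $p(\lambda)$ with probability $\ge 3/4$. The cost is $K\cdot\tau_{\mathrm{mix}}(\eps) = O(\eps^{-2}\log^{d+1}(1/\eps))$ updates.

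\emph{Surface pressure.} Here we would numerically integrate the right-hand side of~\eqref{eq:surf-pressure-identity}. Write $F_j(s,t) = \tfrac{1}{t}\big(\rho_{t\bl_{\ve_j}}(s\ve_j) - \rho_{t\lambda}(0)\big) = \lambda\big(\E_{\mu_{t\bl_{\ve_j}}}e^{-H_{s\ve_j}} - \E_{\mu_{t\lambda}}e^{-H_0}\big)$, so $|F_j|\le\lambda$. Comparing $t\bl_{\ve_j}$ with the constant activity $t\lambda$ on $B_r(s\ve_j)$ — on which they coincide once $s>r$, while their difference $\{x_j<0\}$ sits at distance $s-r$ — strong spatial mixing (in the limiting sense of~\eqref{eqInfVolDensity}) together with translation invariance of $\mu_{t\lambda}$ yields $|F_j(s,t)| \le \lambda\alpha|B_r|e^{-\beta(s-r)}$ uniformly in $t\in[0,1]$, so the outer integral may be truncated to $s\in[0,M]$ with $M = O(\log(1/\eps))$ at cost $\eps$. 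On $[0,M]\times[0,1]$ we would place a uniform grid of spacing $h$ and, at each node $(s,t)$, estimate $\E_{\mu_{t\bl_{\ve_j}}}e^{-H_{s\ve_j}}$ by restricting $t\bl_{\ve_j}$ to $s\ve_j + [-m,m]^d$ and averaging $e^{-H_{s\ve_j}(\cdot)}$ over a few block-dynamics samples there (and likewise $\E_{\mu_{t\lambda}}e^{-H_0}$, with $t\lambda$ restricted to $[-m,m]^d$), and output $\tfrac{1}{d}\sum_j$ of the corresponding quadrature sums. The estimator's bias comes from the $s$-truncation, the per-node box-truncations, and the quadrature error of a regular integrand on a grid of spacing $h$; its variance is the sum over nodes of the per-node Monte Carlo variances, weighted by the squared quadrature weights. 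Choosing $h$ and the per-node sample count to balance these (for instance a midpoint rule with $h = \Theta(\sqrt{\eps/M})$ and $O(1/\eps)$ samples per node) would give accuracy $\eps$ with probability $\ge 3/4$ and total cost $O(M^2/\eps^2)\cdot\tau_{\mathrm{mix}}(\eps) = O(\eps^{-2}\log^{d+3}(1/\eps))$ updates.

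The hard part will be controlling the quadrature error: one must show that $F_j(s,t)$ is sufficiently regular for the chosen rule — essentially a second-order modulus of continuity bounded by a constant, or at worst by a power of $\log(1/\eps)$ — uniformly in $t$. Regularity in $s$ should come from a ``finite-energy'' estimate: perturbing the observation point $s\ve_j$ by $\delta$ changes the region $B_r(s\ve_j)$ relevant to the added energy by a set of volume $O(r^{d-1}\delta)$, and since the one-point density is bounded by $\lambda$ this changes $\E e^{-H_{s\ve_j}}$ by $O(\lambda r^{d-1}\delta)$; for discontinuous potentials such as the hard sphere model this probabilistic argument is essential, since there $e^{-H_{s\ve_j}}$ is not itself continuous in $s$, whereas for $\phi\in L^1$ one may instead invoke $L^1$-continuity of translation. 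Regularity in $t$, including the behaviour near $t=0$ where one divides by $t$, should come from analyticity in $t$ on $[0,1]$ of the one-point densities appearing in $F_j$ — a consequence of strong spatial mixing and the analyticity of the pressure established in~\cite{mp-CC} for activities below $e/\Delta_\phi$ — together with the first-order vanishing of the numerator of $F_j$ at $t=0$. The remaining steps are routine: allotting the $\eps$-budget across the truncations, quadrature and sampling, and a Chebyshev bound to obtain success probability $\ge 3/4$ for the aggregate surface-pressure estimator, which combines polynomially many bounded sub-estimates.
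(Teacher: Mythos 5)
Your pressure algorithm is essentially the paper's: both truncate the half-space activity to a box of sidelength $O(\log(1/\eps))$ using strong spatial mixing, run $O(\eps^{-2})$ block-dynamics samples, average $\lambda e^{-H_0(\cdot)}$, and conclude with Chebyshev. That part matches.

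The surface pressure algorithm is where you diverge, and there is a real gap. You propose to numerically integrate the two-dimensional identity \eqref{eq:surf-pressure-identity}, over both the spatial variable $s$ and the interpolation parameter $t\in[0,1]$, with a midpoint rule at spacing $h=\Theta(\sqrt{\eps/M})$. A midpoint rule at that spacing only gives quadrature error $\eps$ if the integrand has a \emph{second-order} modulus of continuity in both variables. With merely Lipschitz control (which is what Corollary~\ref{cor:compare} gives, with constant roughly $\log^{d-1}(1/\eps)$), the per-cell error is $O(Kh\cdot h^2)$ and the total is $O(MKh) = O(K\sqrt{M\eps})$, which is much larger than $\eps$; so your grid is too coarse, and you would either need a finer mesh (blowing up the stated runtime) or genuine second-order regularity. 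You flag this yourself as "the hard part," but the "finite-energy" sketch for $s$-regularity only yields a Lipschitz bound, and the claim that analyticity of the pressure gives uniform control on $\partial_t\,\E_{\mu_{t\bl}}e^{-H_v}$ in $t$ is not something the cited results deliver directly; neither is established.

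The paper sidesteps this entirely: it does not discretize \eqref{eq:surf-pressure-identity} but instead uses the alternative identity of Lemma~\ref{lem:sp-box}, derived from Corollary~\ref{cor:left-to-right} and Lemma~\ref{lem:torus-alt}. That identity is a \emph{one-dimensional} integral over the spatial variable only, at the fixed activity $\lambda$ — the interpolation parameter has been integrated out exactly, so no regularity in $t$ is ever needed. With a 1D integral, Lipschitz regularity in the spatial variable (Claim~\ref{cl:lipschitz}, via Corollary~\ref{cor:compare}) suffices for a first-order quadrature with $M = O(\eps^{-1}\log^d(1/\eps))$ nodes, and this is what produces the $O(\eps^{-2}\log^{d+3}(1/\eps))$ bound. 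To repair your argument you should either prove the second-order regularity you assume, or switch to Lemma~\ref{lem:sp-box}.
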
 

Note that the $1/4$ failure probability can be made smaller than any $\del>0$ by repeating the algorithm $O(\log (1/\del))$ times and taking a median.

\subsection{Examples}
\label{secExamples}

Two important examples of point processes interacting via finite-range, repulsive pair potentials are the hard-sphere model and the Strauss process.    

\begin{example}
The hard sphere model  is defined by setting $\phi(x) = +\infty$ if $\| x\| <r$ and $0$ otherwise, for some interaction radius $r>0$.  This potential forbids configurations of points in which  any pair of points is within distance $r$, or  in other words, valid configurations are sets of centers of packings of spheres of radius $r/2$.  The hard sphere model is a long-studied toy model of a gas, and in dimension  three is expected to exhibit a crystallization phase transition in the infinite volume limit.  Recent results on sampling from the hard sphere model with rigorous running-time bounds include~\cite{hayes2014lower,helmuth2020correlation,huber2013bounds,jerrum2019perfect,kannan2003rapid,wellens2018note}. 
\end{example}

\begin{example}
The Strauss process~\cite{strauss1975model} is defined by taking  $\phi(x) = A$ if $\| x\| <r$ and $0$ otherwise, for some $r, A>0$.  Strauss proposed this as a model of clustering but Kelly and Ripley~\cite{kelly1976note} soon pointed out that the model is only well defined in the repulsive case.  The process is a soft-interacting variant of the hard sphere model: overlapping spheres are not forbidden but they are penalized.  It is used to model phenomenon such as the location of trees in a forest that exhibit anti-clustering behavior.   Previous rigorous results on sampling from the Strauss process include~\cite{huber2012spatial}.
\end{example}

\subsection{Methods}
\label{secMethods}

One ingredient we use to prove strong spatial mixing is the recursion from~\cite{michelen2020analyticity} given in Lemma~\ref{lem:recursion} below; this recursion may be viewed as a continuous analogue of Weitz's recursion for  occupation probabilities in the hard-core model~\cite{Weitz}.   This allows us to write a density $\rho_{\bl}(v)$ in terms of \emph{other} densities $\rho_{\bl'}$ for some explicit activity vectors $\bl'$.  This gives a heuristic way of computing a density $\rho_{\bl}(v)$: apply the recursion to write $\rho_{\bl}$ in terms of other densities and continue applying the recursion to those densities and so on.  
Similar to~\cite{Weitz}, this recursion implicitly defines a tree-like computational object (explored at length in~\cite{mp-CC}).    The potential-weighted connective constant provides a measure of the strength of a repulsive pair potential as it interacts with the geometry of the underlying space ($\R^d$ here, with other examples considered in~\cite{mp-CC}).  This notion was inspired by the connective constant used in~\cite{sinclair2013spatial,sinclair2017spatial} to give efficient counting and sampling algorithms for the hard-core model.  In the special case of the hard sphere model, the potential-weighted connective constant is a continuum analogue of the discrete connective constant.  

To show strong spatial mixing, we show that in the regime of $\lambda < e/\Delta_\phi$ we can truncate this computational tree at moderate depth and obtain a close approximation to the density $\rho_{\bl}(v)$.  In practice, this amounts to showing that if we have an activity function $\bl$ bounded by $\lambda < e/\Delta_\phi$, then a $k$-fold iteration of the recursion is in fact a \emph{contraction} (see Proposition \ref{prop:connective-contract}) for $k$ large enough in terms of $\lambda$.  Writing the measure of an event in terms of densities (Lemma \ref{lemProjection}) and applying Proposition \ref{prop:connective-contract} gives strong spatial mixing.

The inspiration for writing identities for the pressure and surface pressure involving densities comes from the work of Gamarnik and Katz who studied the pressure and surface pressure of discrete models (the hard-core and monomer-dimer models) using the `sequential cavity method'~\cite{gamarnik2009sequential}, a deterministic algorithm for approximating these quantities on $\mathbb Z^d$ exploiting strong spatial mixing.

\subsection{Open questions}

The algorithm we obtain via Theorem~\ref{thmFastMixing} is an approximate sampling algorithm based on a Markov chain: we run the chain for a prescribed number of steps (depending on the desired accuracy) and the final configuration has distribution guaranteed to be $\eps$-close to the target distribution in total variation distance.   On the other hand, one can ask for a sample distributed \textit{exactly} according to the target distribution.    Perfect sampling algorithms (e.g.~\cite{huber2016perfect,jerrum2019perfect,kendall1998perfect,kendall2000perfect}) accomplish this, and we say they are efficient if their expected running time is polynomial in the input size (in this case, polynomial in the volume of the box from which we are sampling).   Perfect sampling algorithms are appealing in practice since no proof of  efficiency is needed for a guarantee on their output distribution.  For discrete spin systems on graphs of subexponential growth, strong spatial mixing implies the existence of an efficient perfect  sampling algorithm~\cite{feng2019perfect}. We ask if such an implication holds for Gibbs point processes as well.
\begin{question}
Is there an efficient perfect sampling algorithm for Gibbs point processes defined by finite-range, repulsive pair potentials with $\lam <  e/\Delta_{\phi}$?
\end{question}

Regarding approximation algorithms for the partition function, both the algorithm of Corollary~\ref{thmApproxAlg} and the algorithm for the hard sphere partition function from~\cite{friedrich2021spectral} use randomness, while in the discrete setting there is a \textit{fully polynomial-time approximation scheme} (an FPTAS; an efficient approximation scheme that is deterministic)  due to Weitz for the hard-core model  on  graphs of maximum degree $\Delta$  when $\lam < \lam_c(\Delta)$, and deterministic approximation algorithms for the pressure and surface pressure in~\cite{gamarnik2009sequential}. 
\begin{question}
Is there an efficient deterministic approximation algorithm (an FPTAS) for $Z_{\Lam_n}(\lam)$ for $\lam <  e/\Delta_{\phi}$ for repulsive potentials?
\end{question}

Recently, Friedrich,  G{\H o}bel, Katzmann, Krejca, and Pappik demonstrated a deterministic algorithm for $Z_{\Lam_n}(\lam)$ that runs in quasipolynomial time \cite{friedrichAlg} for the case of hard spheres and $\lambda < e/C_\phi$.  It remains to demonstrate a FPTAS as well as a quasipolynomial time algorithm in the regime $\lam < e/\Delta_{\phi}$ for all (finite-range) repulsive potentials.

The algorithm of~\cite{gamarnik2009sequential} to approximate the pressure   gives an $\eps$ additive approximation to $p(\lam)$ with running time polynomial in $1/\eps$.  This is a similar running time to an algorithmic implementation of the cluster expansion, but the Gamarnik--Katz algorithm works for a much wider range of $\lam$, up to at least the bounds for strong spatial mixing proved in~\cite{Weitz,sinclair2017spatial}.  We ask if such an algorithm exists for Gibbs point processes.
\begin{question}
Is there an algorithm that gives an $\eps$ additive approximation to $p(\lam)$ and $\rho(\lam)$ in time polynomial in $1/\eps$  for repulsive point processes when $\lam < e/\Delta_{\phi}$?   Is there such an algorithm for \emph{any} $\lam >0$?
\end{question}

 \subsection{Outline }

In Section~\ref{secPrelim} we introduce some preliminary definitions and results about densities and $k$-point densities, including the recursive identity from~\cite{michelen2020analyticity} given in Lemma~\ref{lem:recursion}. In Section \ref{secSSM} we prove Theorem \ref{thmSSM}, showing strong spatial mixing.  In Section~\ref{secPressure}, we prove Theorem~\ref{lem:pressure-dir} as well as another useful  identity (Lemma~\ref{lem:interpolate-to-zero}) for the infinite-volume pressure.  In Section~\ref{secSurface}, we address the surface pressure and finite-volume corrections to the pressure, proving Theorems~\ref{lem:surface-pressure} and~\ref{lem:sp-torus}.  In Section~\ref{secAlgorithms} we prove Theorem~\ref{thm:pressure-algs}, our algorithmic results for the pressure and surface pressure. 
  In Appendix~\ref{secSSMtoBlock} we show  that strong spatial mixing implies fast mixing of block dynamics and prove Theorem~\ref{thmFastMixing}.  In Appendix~\ref{secApproxCount} we prove Corollary~\ref{thmApproxAlg}.


	\section{Preliminaries}
	\label{secPrelim}

	We begin with some definitions and lemmas about Gibbs point processes.  
	An \emph{activity function} $\bl:\R^d \to \R_{\geq 0}$ is an integrable function on $\R^d$; we say it is $\lambda$-bounded if $\| \bl \|_{\infty} \leq \lambda$.  The \emph{density} (or one-point density) at a point $v \in \R^d$ at activity $\bl$ is defined by~\eqref{eq:added-energy} above, but can be rewritten as the ratio of partition functions:
	 \begin{equation}\label{eq:density-def}
	\rho_{\bl}(v) = \bl(v) \frac{Z(\bl e^{-\phi(v-\cdot)})}{Z(\bl)}
	\end{equation}
	where the activity $\bl e^{-\phi(v-\cdot)}$ is the function $w \mapsto \bl(w) e^{-\phi(v-w)}$.   This follows from applying the GNZ equations; see e.g.,~\cite{ruelle1999statistical,jansen2019cluster}.

	Similarly, the $k$-point density---or $k$-point correlation function---at $(v_1,\ldots,v_k) \in (\R^d)^k$ is given by 
	\begin{equation}\label{eq:k-density}
	\rho_{\bl}(v_1,\ldots,v_k) = \bl(v_1)\cdots \bl(v_k) \frac{Z(\bl e^{-\sum_{i = 1}^k \phi(v_i - \cdot) } )}{Z(\bl)} e^{-H(v_1,\ldots,v_k)} 
	\end{equation}
	where the activity function $\bl e^{-\sum_{i = 1}^k \phi(v_i - \cdot)}$ is the function $w \mapsto \bl(w) e^{-\sum_{i = 1}^k \phi(v_i - w)}$.  The $k$-point density can also be defined in terms of the added energy of the $k$-tuple of points in a manner similar to \eqref{eq:added-energy}.

The next lemma expresses a one-point density in terms of an integral of other one-point densities. 	
	\begin{lemma}[\cite{michelen2020analyticity},  Theorem 8]
		\label{lem:recursion}
		Suppose an activity function $\bl$ is bounded and has bounded support.  Then for every $v \in \R^d$ we have 
		\begin{equation}
		\label{eqIdentity}
		\rho_{\bl}(v) = \bl (v) \exp\left(-\int_{\R^d}\rho_{\bl_{v \to w}}(w)(1 - e^{-\phi(v-w)})     \,dw  \right)\,,
		\end{equation}
where for $v, w \in \R^d$ the activity function $\bl_{v \to w}: \R^d \to \mathbb C$ is defined by
	\[ \bl_{v \to w}(x) = \begin{cases} \bl (x) e^{-\phi(v-x)} &\mbox{if } \|v -x\| < \| v -w\| \\ 
	\bl(x) &\mbox{if } \| v- x \|  \ge \| v -w\|  \,.\end{cases} \]  
	\end{lemma}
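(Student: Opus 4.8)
The plan is to prove the identity~\eqref{eqIdentity} by a telescoping/interpolation argument along a one-parameter family of activity functions that continuously ``turns off'' the interaction of $v$ with the rest of the configuration. Concretely, for $v \in \R^d$ fixed, introduce a radius parameter $R \ge 0$ and consider the activity function $\bl^{(R)}$ that agrees with $\bl$ outside the ball $B_R(v)$ and equals $\bl(x) e^{-\phi(v-x)}$ inside $B_R(v)$; equivalently $\bl^{(R)}(x) = \bl(x) e^{-\one\{\|v-x\| < R\}\phi(v-x)}$. At $R = 0$ this is just $\bl$, and for $R$ large enough (larger than the range $r$ of $\phi$, or larger than the diameter of $\supp(\bl)$ together with $v$) this is $\bl e^{-\phi(v-\cdot)}$. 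The key observation is that $\bl_{v \to w}$ from the statement is exactly $\bl^{(\|v-w\|)}$, so an integral over $w$ can be rewritten, via the coarea/layer-cake formula, as an integral over the radius $R$.

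First I would start from the ratio-of-partition-functions formula~\eqref{eq:density-def}, namely $\rho_{\bl}(v) = \bl(v)\, Z(\bl e^{-\phi(v-\cdot)})/Z(\bl)$, and write the logarithm of the ratio as
\[
\log \frac{Z(\bl e^{-\phi(v-\cdot)})}{Z(\bl)} = \log \frac{Z(\bl^{(\infty)})}{Z(\bl^{(0)})} = \int_0^\infty \frac{d}{dR}\log Z(\bl^{(R)})\,dR \,,
\]
where $\bl^{(\infty)}$ denotes the eventually-constant limit. Next I would differentiate $\log Z(\bl^{(R)})$ in $R$. Since $\frac{d}{dR}\bl^{(R)}(x)$ is (formally) a surface measure on $\partial B_R(v)$ weighted by $\bl(x)(1 - e^{-\phi(v-x)})\,e^{-(\text{stuff already applied})}$, and since $\frac{\partial}{\partial \bl(x)}\log Z(\bl) = \rho_{\bl}(x)/\bl(x)$ (the standard fact that the functional derivative of $\log Z$ with respect to the activity recovers the one-point density divided by the activity, again from the GNZ equations), the chain rule gives
\[
\frac{d}{dR}\log Z(\bl^{(R)}) = -\int_{\partial B_R(v)} \rho_{\bl^{(R)}}(w)\,(1 - e^{-\phi(v-w)})\, d\sigma(w)\,.
\]
Integrating over $R \in [0,\infty)$ and recombining the radial and spherical integrals into a single integral over $\R^d$ (the coarea formula), together with $\bl^{(\|v-w\|)} = \bl_{v\to w}$, yields exactly $-\int_{\R^d}\rho_{\bl_{v\to w}}(w)(1 - e^{-\phi(v-w)})\,dw$, which is~\eqref{eqIdentity} after exponentiating and multiplying by $\bl(v)$.

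The main obstacle is making the differentiation rigorous rather than formal: $\bl^{(R)}$ is not differentiable in $R$ in any naive pointwise sense (it has a jump across the sphere $\partial B_R(v)$ whose location moves with $R$), and if $\phi$ takes the value $+\infty$ (as for hard spheres) then $e^{-\phi(v-x)}$ is discontinuous. The clean way around this, and the route I expect the actual proof to take, is to avoid $R$-differentiation entirely: discretize, i.e.\ replace the continuous family by a finite sequence $\bl = \bl_0, \bl_1, \dots, \bl_m = \bl e^{-\phi(v-\cdot)}$ where $\bl_{i}$ differs from $\bl_{i-1}$ only on a thin annulus, write $\log(Z(\bl_m)/Z(\bl_0))$ as a telescoping sum of $\log(Z(\bl_i)/Z(\bl_{i-1}))$, estimate each term to first order using the definition of the one-point density as a derivative of $\log Z$ (or directly via the power-series definition~\eqref{eqPPpartition} and dominated convergence, using that $\bl$ is bounded with bounded support so $Z(\bl)$ is an entire function of the activity with no zeros on the relevant segment), and pass to the limit $m \to \infty$. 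One must check that $Z(\bl_i) \neq 0$ all along the interpolation so that $\log$ is well-defined and continuous — this holds because each $\bl_i$ is pointwise dominated by the nonnegative bounded compactly-supported function $\bl$, which forces $Z(\bl_i) \ge 1 > 0$ (all terms in~\eqref{eqPPpartition} are nonnegative for a repulsive potential). The hard-core case is handled by a further limiting argument, approximating $\phi$ from below by finite bounded repulsive potentials and using monotone/dominated convergence of both sides, or simply by noting that on $\supp(\bl)$ the relevant integrands vanish wherever $\phi = \infty$.
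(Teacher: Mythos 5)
The paper does not prove this lemma---it cites it as Theorem~8 of \cite{michelen2020analyticity}---so there is no internal proof to compare against. On its own merits your plan is correct and is the natural proof of the recursion: start from $\rho_{\bl}(v) = \bl(v)\,Z(\bl e^{-\phi(v-\cdot)})/Z(\bl)$, interpolate radially via $\bl^{(R)}$ with $\bl_{v\to w} = \bl^{(\|v-w\|)}$, differentiate $\log Z(\bl^{(R)})$ in $R$ using that the functional derivative of $\log Z$ in direction $h$ is $\int h(x)\,\rho_{\bl}(x)/\bl(x)\,dx$, and reassemble with the coarea formula. Your rigor caveats (the map $R \mapsto \bl^{(R)}$ is only $L^1$-differentiable, $Z \geq 1$ along the whole path since the potential is repulsive and the activity nonnegative, hard-core potentials handled by truncation and a limit) are exactly the points that need care, and the discretization/telescoping fallback is a perfectly good way to make the $R$-derivative rigorous. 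One small slip in the heuristic step: the factor $e^{-(\text{stuff already applied})}$ you attach to the surface-measure weight is spurious, since for $x \in \partial B_R(v)$ the strict inequality in the definition gives $\bl^{(R)}(x) = \bl(x)$, so the jump across the shell is exactly $\bl(x)(e^{-\phi(v-x)} - 1)$ and nothing more; this cancels when you divide by $\bl^{(R)}(w) = \bl(w)$ in the chain rule, so your displayed derivative formula and the final identity are correct.
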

	
		While the right-hand side of \eqref{eqIdentity} may be hard to parse, it is useful to think of this lemma more abstractly: it states that  a given density $\rho_{\bl}$ may be written as a functional $F$ evaluated at \emph{other} densities $\rho_{\bl_{v \to w}}$, where $\rho_{\bl_{v \to w}}$ is pointwise upper bounded by $\rho_{\bl}$.  One could then apply this identity again to the densities $\rho_{\bl_{v \to w}}$, and then apply it again and so forth recursively.  Upon repeating this process, properties of the high iterates of this functional $F$ come into play; in~\cite{mp-CC} it was shown that after a change of variables, a sufficiently high iterate of the functional $F$ is a contraction, thus allowing us to compare densities at different activities.  This takes the form of the following proposition, which is crucial to our analysis here.
	\begin{prop}
		\label{prop:connective-contract}
		Suppose $\phi$ has range at most $r$ and let $V_k$ be defined as in~\eqref{eq:Vk-def}.  Then for every $\lambda \ge 0$, $v \in \R^d$ and activity functions $\bl,\bl' $ bounded by $\lambda$ we have  $$\left|\rho_{\bl}(v) - \rho_{\bl'}(v)\right| \leq 2 \lambda (\lambda / e)^{k/2}  \sqrt{V_k} $$
		where $k = \lfloor \dist(v,\supp(\bl - \bl')) / r \rfloor$
	\end{prop}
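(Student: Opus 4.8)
The plan is to iterate the recursion of Lemma~\ref{lem:recursion} exactly $k$ times, starting from both $\rho_{\bl}(v)$ and $\rho_{\bl'}(v)$, and to read off the bound from the resulting tree-like integral, following the contraction analysis of~\cite{mp-CC}. The structural fact that makes this work is that $\phi$ has range at most $r$, so the factor $1-e^{-\phi(v-w)}$ in~\eqref{eqIdentity} vanishes unless $\|v-w\|\le r$; hence one application of the recursion ``moves'' by at most $r$, and after $j<k$ steps the point in question is still at distance at least $(k-j)r>0$ from $\supp(\bl-\bl')$.

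First I would unfold. After $j$ applications of~\eqref{eqIdentity} the densities appearing are $\rho_{\bl^{(j)}}(v_j)$, indexed by sequences $v=v_0,v_1,\dots,v_j$ with $\|v_{i-1}-v_i\|\le r$ and with activity functions defined recursively by $\bl^{(0)}=\bl$ and $\bl^{(i)}=(\bl^{(i-1)})_{v_{i-1}\to v_i}$. Running the same recursion with $\bl'$ produces the same sequences and the same modification rule (the two activities are multiplied by the \emph{same} cutoff factor at each step), so $\supp(\bl^{(j)}-\bl'^{(j)})\subseteq\supp(\bl-\bl')$ for all $j$; combined with $\|v_j-v_0\|\le jr$ this gives $\bl^{(j)}(v_j)=\bl'^{(j)}(v_j)$ for every $j<k$. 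I would also record that the accumulated multiplicative suppression, $\bl^{(j)}(v_j)=\bl(v_j)\exp\bigl(-\sum_{i=0}^{j-2}\one_{\|v_j-v_i\|<\|v_i-v_{i+1}\|}\phi(v_j-v_i)\bigr)$, is exactly the factor $S_j(\bv)$ appearing in the integrand of $V_j$ in~\eqref{eq:Vk-def}.

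Next I would linearize the difference. For $j<k$, write $\rho_{\bl^{(j)}}(v_j)=\bl^{(j)}(v_j)e^{-A}$ and $\rho_{\bl'^{(j)}}(v_j)=\bl^{(j)}(v_j)e^{-A'}$ with $A,A'\ge 0$ the exponents in~\eqref{eqIdentity}; then $|e^{-A}-e^{-A'}|\le|A-A'|\max(e^{-A},e^{-A'})$, the quantity $\bl^{(j)}(v_j)\max(e^{-A},e^{-A'})$ equals $\max(\rho_{\bl^{(j)}}(v_j),\rho_{\bl'^{(j)}}(v_j))$, and $|A-A'|$ is bounded by the integral over the children of the difference of child densities. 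Iterating this from $j=0$ through $j=k-1$ and using $\rho_{\bl^{(k)}}(v_k)+\rho_{\bl'^{(k)}}(v_k)\le 2\lambda S_k(\bv)$ at the leaves yields
\[
\bigl|\rho_{\bl}(v)-\rho_{\bl'}(v)\bigr|\ \le\ 2\lambda\int_{(\R^d)^k}\Bigl(\prod_{j=0}^{k-1}M_j(\bv)\Bigr)\,S_k(\bv)\prod_{j=1}^{k}\bigl(1-e^{-\phi(v_{j-1}-v_j)}\bigr)\,d\bv,
\]
where $M_j(\bv)=\max(\rho_{\bl^{(j)}}(v_j),\rho_{\bl'^{(j)}}(v_j))$. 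Bounding each $M_j$ crudely by $\lambda S_j(\bv)$ recovers the estimate $2\lambda^{k+1}V_k$, which already proves the proposition when $\lambda V_k^{1/k}\le 1/e$.

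The remaining task — which I expect to be the main obstacle — is to improve $\lambda^k$ to $(\lambda/e)^{k/2}$ and $V_k$ to $\sqrt{V_k}$ for all $\lambda$. Here I would use the second elementary bound on a density: writing $\rho=\bl^{(j)}(v_j)e^{-A}$ and invoking $xe^{-x}\le e^{-1}$ gives $\rho\,A\le\lambda S_j(\bv)/e$, and crucially $A$ is precisely $\int\rho_{\bl^{(j)}_{v_j\to w}}(w)\bigl(1-e^{-\phi(v_j-w)}\bigr)\,dw$, the integrated weight of the children of $v_j$ at the next level. Pairing consecutive levels and interpolating — via Cauchy--Schwarz — between the bound $\rho\le\lambda S_j(\bv)$ and the bound $\rho\cdot(\text{child weight})\le\lambda S_j(\bv)/e$ is what turns the integral above into $2\lambda(\lambda/e)^{k/2}\sqrt{V_k}$; in the language of~\cite{mp-CC} this is the statement that, after the right potential-function change of variables, the $k$-fold iterate of the recursion is a genuine contraction whose contraction factor is governed by $V_k$. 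The unfolding and linearization steps are routine once Lemma~\ref{lem:recursion} and the finite-range hypothesis are in hand; all the real difficulty is in the pairing-and-Cauchy--Schwarz argument that simultaneously produces the square root of $V_k$ and the gain of $e^{-1}$ per pair of levels.
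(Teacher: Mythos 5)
The paper's own proof of Proposition~\ref{prop:connective-contract} is a one-line citation: ``This follows from combining Lemmas 23 and 26 from \cite{mp-CC}.'' So what you are attempting is a reconstruction of the argument in that companion paper, not a comparison against anything proved here.

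Your setup is on target, and it does match the spirit of what \cite{mp-CC} does. The unfolding is correct: after $j$ applications of Lemma~\ref{lem:recursion} both iterated activities are multiplied by the \emph{same} cutoff factor (because the modification rule in $\bl_{v\to w}$ depends only on the potential $\phi$ and the geometry, not on $\bl$ itself), so $\supp(\bl^{(j)}-\bl'^{(j)})\subseteq\supp(\bl-\bl')$ and the finite range hypothesis forces $\bl^{(j)}(v_j)=\bl'^{(j)}(v_j)$ for $j<k$. Your identification of $\bl^{(j)}(v_j)=\bl(v_j)S_j(\bv)$ with the weight appearing in~\eqref{eq:Vk-def} is right, and the linearization $|e^{-A}-e^{-A'}|\le |A-A'|\max(e^{-A},e^{-A'})$, telescoped $k$ times, does produce the integral you display. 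The sanity check that the crude bound $2\lambda^{k+1}V_k$ implies the proposition when $\lambda V_k^{1/k}\le 1/e$ is also correct.

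The genuine gap is exactly where you flag it: the passage from the displayed integral to $2\lambda(\lambda/e)^{k/2}\sqrt{V_k}$. You describe this as ``pairing consecutive levels and interpolating via Cauchy--Schwarz'' between $M_j\le \lambda S_j$ and $M_j A_j\le \lambda S_j/e$, but as stated this does not obviously close. The quantity $A_j$ is the full integrated child weight at level $j$, while in the unfolded integral each level contributes only the single factor $(1-e^{-\phi(v_j-v_{j+1})})$ along the chosen chain; the two are not directly comparable, and naive pairing of levels would give mismatched powers of $\lambda$ (you need $\lambda^{k/2}$, not $\lambda^k/e^{k/2}$). What is actually happening in \cite{mp-CC} is not a per-level pairing of the raw recursion but a potential-function argument: the recursion is rewritten in terms of a carefully chosen map of the density, and it is the derivative of that potential, combined with Cauchy--Schwarz applied \emph{once} across the whole $k$-level integral, that produces both the square root of $V_k$ and the factor $e^{-k/2}$ simultaneously. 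Your sketch gestures at this (``after the right potential-function change of variables''), but it does not pin down the potential, verify the contraction estimate level by level, or show that the constants compose to give exactly $2\lambda(\lambda/e)^{k/2}\sqrt{V_k}$. Since this step is the entire content of \cite[Lemmas 23 and 26]{mp-CC}, the proposal as written establishes the proposition only in the small-$\lambda$ regime where the crude bound already suffices, and defers the rest. A minor additional point: Lemma~\ref{lem:recursion} is stated for $\bl$ of bounded support, so for general integrable $\lambda$-bounded activities one should either note that the proposition is only invoked after restricting to bounded regions, or pass to an approximation and take limits.
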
	
	\begin{proof}
		This follows from combining Lemmas 23 and 26 from~\cite{mp-CC}.
	\end{proof}

	In order to prove Theorems~\ref{lem:pressure-dir} and~\ref{lem:surface-pressure}, we will need an identity for $\log Z(\bl)$; this may be interpreted as an analogue of a certain telescoping product formula for $1/Z$ in the discrete case employed by Gamarnik and Katz in \cite{gamarnik2009sequential}.
		\begin{lemma}[\cite{mp-CC}, Lemma 33] \label{lem:log-Z}
			Let $\bl$ be bounded and have bounded support; for any $q \in [1,\infty]$ we have  
			$$\log Z(\bl) = \int_{\R^d} \rho_{\hat{\bl}_x}(x)\,dx$$ where $$\hat{\bl}_x(s) = \begin{cases}
			0 & \text{ if } \|s \|_{q} < \| x\|_{q} \\ 
			\bl(s) & \text{ otherwise.}
			\end{cases}$$ 
	\end{lemma}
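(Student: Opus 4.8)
The plan is to recover $\log Z(\bl)$ by switching the activity off one infinitesimal ``shell'' at a time, ordering $\R^d$ by the function $r(x) := \|x\|_q$, and to identify each infinitesimal increment of $\log Z$ with exactly the one-point density $\rho_{\hat{\bl}_x}(x)$ appearing on the right-hand side; this is the continuous counterpart of the finite telescoping product for $1/Z$ used in~\cite{gamarnik2009sequential}. Concretely, for $t \ge 0$ set $\bl^{(t)}(s) = \bl(s)\,\one\{r(s) \ge t\}$, so that $\bl^{(0)} = \bl$ and $\hat{\bl}_x = \bl^{(r(x))}$. Since $\bl$ is bounded with bounded support and $H \ge 0$ (hence $e^{-H} \le 1$), we have $1 \le Z(\bl^{(t)}) \le e^{\int \bl} < \infty$ for all $t$, $t \mapsto \log Z(\bl^{(t)})$ is bounded, and $\bl^{(t)} \equiv 0$ --- hence $\log Z(\bl^{(t)}) = 0$ --- once $t > \sup_{s \in \supp(\bl)} r(s)$.

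The core of the argument is the identity
\begin{equation*}
\frac{d}{dt}\log Z\big(\bl^{(t)}\big) = -\int_{\{r(x) = t\}} \rho_{\bl^{(t)}}(x)\, d\mathcal{H}_t(x),
\end{equation*}
where $\mathcal{H}_t$ is the coarea measure on the level set $\{r = t\}$, normalised so that $\int_{\R^d} f\,dy = \int_0^\infty \int_{\{r = t\}} f\, d\mathcal{H}_t\, dt$ for integrable $f$ (ordinary polar coordinates when $q = 2$). To prove it, differentiate the series~\eqref{eqPPpartition} term by term in $t$ (justified by dominated convergence, using boundedness and bounded support of $\bl$ together with $e^{-H}\le 1$). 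In the $k$-th term the only $t$-dependence is through the domain $\{r(x_i)\ge t\text{ for all }i\}$; since $\frac{d}{dt}\int_{\{r(y)\ge t\}} g\,dy = -\int_{\{r(y)=t\}} g\, d\mathcal{H}_t$, applying this coordinatewise and using the symmetry of the integrand gives
\begin{equation*}
\frac{d}{dt} Z\big(\bl^{(t)}\big) = -\int_{\{r(x) = t\}} \bl(x)\left[\sum_{m \ge 0}\frac{1}{m!}\int_{(\R^d)^m}\prod_{i=1}^m \bl^{(t)}(y_i)\, e^{-H(x,y_1,\dots,y_m)}\,dy\right] d\mathcal{H}_t(x).
\end{equation*}
By the series form of the one-point density following from~\eqref{eq:added-energy} and~\eqref{eqPPpartition}, the bracketed quantity equals $Z(\bl^{(t)})\,\rho_{\bl^{(t)}}(x)/\bl^{(t)}(x)$; on $\{r(x) = t\}$ we have $\bl^{(t)}(x) = \bl(x)$, the two factors $\bl(x)$ cancel, and dividing by $Z(\bl^{(t)})$ yields the claimed formula.

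To conclude, integrate in $t$. The derivative above lies in $L^1(0,\infty)$ --- it is bounded by $\lambda\cdot\mathcal{H}_t(\{r=t\}\cap\supp(\bl))$, whose integral over $t$ is $|\supp(\bl)|<\infty$ --- so by the fundamental theorem of calculus and the boundary behaviour above,
\begin{equation*}
\log Z(\bl) = \log Z\big(\bl^{(0)}\big) = -\int_0^\infty \frac{d}{dt}\log Z\big(\bl^{(t)}\big)\,dt = \int_0^\infty\int_{\{r(x) = t\}} \rho_{\bl^{(t)}}(x)\, d\mathcal{H}_t(x)\,dt.
\end{equation*}
Applying the coarea formula once more, now to recombine the shells into a single integral over $\R^d$ (with $f(x) := \rho_{\bl^{(r(x))}}(x)$, which on $\{r = t\}$ equals $\rho_{\bl^{(t)}}(x)$), gives $\log Z(\bl) = \int_{\R^d}\rho_{\bl^{(r(x))}}(x)\,dx = \int_{\R^d}\rho_{\hat{\bl}_x}(x)\,dx$, as desired. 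The integral is really over $\supp(\bl)$, a set of finite measure, since $\rho_{\hat{\bl}_x}(x) = \hat{\bl}_x(x)\,\E_{\mu_{\hat{\bl}_x}}e^{-H_x(\X)} = \bl(x)\,\E_{\mu_{\hat{\bl}_x}}e^{-H_x(\X)}$ vanishes when $\bl(x) = 0$, which also keeps all the interchanges above legitimate.

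The main obstacle is purely technical: justifying the term-by-term differentiation and, more delicately, invoking the coarea formula for $r = \|\cdot\|_q$ when $q \ne 2$, where $r$ is merely Lipschitz (its level sets are boundaries of $\ell_q$-balls) rather than smooth. This is harmless, since the coarea formula holds for proper Lipschitz functions with Lebesgue-null level sets; but if one prefers to avoid it, one can run the telescoping at the level of a finite sum over thin annuli, writing each increment as
\begin{equation*}
\log Z\big(\bl^{(t_{i-1})}\big) - \log Z\big(\bl^{(t_i)}\big) = \int_0^1 \frac{1}{s}\int_{\{t_{i-1}\le r(x) < t_i\}}\rho_{\bl^{(t_i)} + s\,\bl\one\{t_{i-1}\le r < t_i\}}(x)\,dx\,ds
\end{equation*}
(the $1/s$ is cancelled by the factor $s$ sitting inside that density, which is $\le\lambda$), and then passing to the limit as the mesh tends to $0$ by dominated convergence --- everything is bounded by $\lambda$ on the finite-measure set $\supp(\bl)\times[0,1]$, and for each $x$ the relevant activities $\bl^{(t_i)} + s\,\bl\one\{t_{i-1}\le r < t_i\}$ converge in $L^1$ to $\hat{\bl}_x$ as the annulus around $x$ shrinks, whence the integrand converges pointwise to $\rho_{\hat{\bl}_x}(x)$.
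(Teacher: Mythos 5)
Your proof is correct. The paper does not actually reproduce a proof of this lemma---it cites \cite{mp-CC}, Lemma 33---so a line-by-line comparison is not possible, but the approach you take is precisely the one the paper itself describes: a continuous analogue of the Gamarnik--Katz telescoping product for $1/Z$, switching off the activity one $\ell_q$-shell at a time. The central calculation is sound: differentiating $\log Z(\bl^{(t)})$ in the threshold $t$, identifying the resulting boundary integrand with $\rho_{\bl^{(t)}}(x)$ via the series for $Z(\bl^{(t)}e^{-\phi(x-\cdot)})$, and undoing the coarea slicing at the end. You also correctly identify the two genuine technical hazards---term-by-term differentiation of the partition function series, and using a coarea/level-set disintegration for the merely Lipschitz function $\|\cdot\|_q$---and your alternative finite-annulus argument with the $s$-interpolation and the uniform bound $\rho_{\bl_s}(x)/s \le \lambda$ does close both cleanly. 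One small point worth sharpening in the coarea version: rather than asserting pointwise differentiability of $t \mapsto \log Z(\bl^{(t)})$ (which implicitly needs the surface measure $\mathcal{H}_t(\{r=t\}\cap\supp\bl)$ to behave well for a.e.\ $t$), it is cleaner to establish absolute continuity of $t\mapsto Z(\bl^{(t)})$ directly from the Lipschitz bound $|Z(\bl)-Z(\bl')|\le e^{\lambda|\supp\bl|}\int|\bl-\bl'|$ and then apply Lebesgue's fundamental theorem of calculus; but your annulus argument already sidesteps this entirely, so the proof as a whole is complete.
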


\section{Strong spatial mixing}
\label{secSSM}

Our starting point for proving strong spatial mixing is an identity for the probability of an event involving a point process in terms of the $k$-point density functions and one-point density functions with modified activity functions.  Recall the notion of projecting the law of $\mu_{\bl}$ to $\Lam \subset \R^d$ from Section~\ref{subsecSSM}.
\begin{lemma}
	\label{lemProjection}
	Suppose $A \in \mathfrak N(\Lam)$ and let $x_0 \in \Lambda$.   Then
	{\small
	\begin{align*}
	\mu_{\bl}( A_{\Lam}) &= \sum_{k \ge 0} \frac{1}{k!} \int_{\Lam^k} \one_{\{x_1, \dots, x_k \} \in A} \cdot \rho_{\bl} (x_1, \dots , x_k)   \exp \left ( - \int_{\Lam}  \rho_{\hat \bl_{x, x_1, \dots x_k}}(x) \, d x  \right ) \, dx_1, \cdots d x_k \,.
	\end{align*}}
	where 
	\[ 
	\hat \bl_{x, x_1, \dots, x_k} (y) =  \begin{cases}  0 &\mbox{if } \|y -x_0\| < \|x -x_0 \| \,\text{ and }\, y \in \Lambda \\
	\bl(y) \prod_{i=1}^k e^{- \phi(y-x_i)}  &\mbox{otherwise } 
	\end{cases}\,.
	\]
\end{lemma}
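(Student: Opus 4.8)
The plan is to start from the definition of $\mu_{\bl}(A_\Lam)$ given in~\eqref{eq:mu-def}, partition each configuration into its part in $\Lam$ and its part in $\Lam^c := \R^d \setminus \Lam$, and then recognize the resulting sum over the $\Lam^c$-part as a partition function of a Gibbs point process with a modified activity function. Concretely, expanding the Poisson reference process and using that a finite point set $\eta$ with $\eta \cap \Lam \in A$ decomposes as $\{x_1,\dots,x_k\} \subset \Lam$ together with $\{y_1,\dots,y_m\} \subset \Lam^c$, one writes
\begin{align*}
\mu_{\bl}(A_\Lam) = \sum_{k \ge 0} \frac{1}{k!} \int_{\Lam^k} \one_{\{x_1,\dots,x_k\}\in A}\, \bl(x_1)\cdots\bl(x_k)\, e^{-H(x_1,\dots,x_k)} \cdot \frac{1}{Z(\bl)}\Bigg[ \sum_{m \ge 0} \frac{1}{m!} \int_{(\Lam^c)^m} \prod_{j} \bl(y_j) \, e^{-H(y_1,\dots,y_m)}\, e^{-\sum_{i,j}\phi(x_i-y_j)} \, d\by \Bigg] \, d\bx \,.
\end{align*}
The inner bracket is exactly $Z\big(\bl\, e^{-\sum_i \phi(x_i - \cdot)}\, \one_{\Lam^c}\big)$, and multiplying and dividing by $Z(\bl e^{-\sum_i \phi(x_i-\cdot)})$ and then by $Z(\bl)$ in the right way, one identifies the product $\bl(x_1)\cdots\bl(x_k) e^{-H(\bx)} Z(\bl e^{-\sum_i\phi(x_i-\cdot)})/Z(\bl)$ as the $k$-point density $\rho_{\bl}(x_1,\dots,x_k)$ via~\eqref{eq:k-density}. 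What remains is the ratio $Z\big(\bl e^{-\sum_i \phi(x_i-\cdot)} \one_{\Lam^c}\big) \big/ Z\big(\bl e^{-\sum_i \phi(x_i-\cdot)}\big)$, which is the probability that the Gibbs point process with activity function $\bl e^{-\sum_i \phi(x_i-\cdot)}$ places no points in $\Lam$; equivalently, it equals $e^{-\log Z(\bl e^{-\sum_i\phi(x_i-\cdot)}) + \log Z(\bl e^{-\sum_i\phi(x_i-\cdot)}\one_{\Lam^c})}$.

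The key step is then to rewrite this ``void probability'' exponent using the $\log Z$ identity of Lemma~\ref{lem:log-Z}. Taking the $q = \infty$ norm (so that $\|s\|_q = \|s\|_\infty$ matches the $\|\cdot\|$ appearing in the statement — or, being careful, whichever norm the paper uses in $\hat\bl_x$; I will use the norm consistent with Lemma~\ref{lem:log-Z}), applying Lemma~\ref{lem:log-Z} to both $\bl e^{-\sum_i\phi(x_i-\cdot)}$ and its restriction to $\Lam^c$, and subtracting, the two integrals agree on $\{x : \hat\bl_x \text{ is supported outside } \Lam\}$ and differ precisely on $x \in \Lam$, leaving
\begin{align*}
\log Z\big(\bl e^{-\sum_i\phi(x_i-\cdot)}\big) - \log Z\big(\bl e^{-\sum_i\phi(x_i-\cdot)}\one_{\Lam^c}\big) = \int_{\Lam} \rho_{\hat\bl_{x,x_1,\dots,x_k}}(x)\, dx \,,
\end{align*}
where $\hat\bl_{x,x_1,\dots,x_k}$ is $\bl(y)\prod_i e^{-\phi(y-x_i)}$ for $\|y\| \ge \|x\|$ and $0$ for $\|y\| < \|x\|$, exactly as in the statement. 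Substituting this back gives the claimed formula. I should double-check the bookkeeping needed to reduce a telescoping over all of $\R^d$ (in Lemma~\ref{lem:log-Z}) to an integral over $\Lam$ only: this uses that the two activity functions differ only on $\Lam$, so the integrands $\rho_{\hat\bl_x}(x)$ and $\rho_{(\hat\bl \one_{\Lam^c})_x}(x)$ coincide for $x \notin \Lam$ (for such $x$ the thresholding $\|s\| \ge \|x\|$ already excludes a neighborhood of $0$, but one needs the restriction to $\Lam^c$ to have no further effect — this requires $\Lam$ to be, say, a ball centered at the origin or more care about which points are thresholded away; in the application $\Lam$ can be taken to be a box or ball containing the origin, or one adjusts the norm).

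The main obstacle I anticipate is precisely this last point: making the ``difference of two $\log Z$ telescoping integrals'' collapse cleanly onto $\Lam$, which is delicate because Lemma~\ref{lem:log-Z}'s threshold $\|s\|_q < \|x\|_q$ is tied to the origin rather than to $\Lam$. One clean way around it is to choose the norm $\|\cdot\|_q$ and the region $\Lam$ compatibly (e.g. $\Lam$ a sublevel set of that norm), prove the lemma in that case, and note that the general measurable $\Lam$ in the statement is handled either by this choice or by a direct re-derivation of a $\log Z$-type identity adapted to $\Lam$ (integrating $x$ first over $\Lam$ and then over $\Lam^c$). Modulo this technical matching of thresholds, the rest is a routine manipulation of partition functions and the definitions~\eqref{eq:density-def}–\eqref{eq:k-density}.
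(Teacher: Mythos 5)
Your algebraic setup is correct and matches the structure of the paper's proof: you factor the configuration into its $\Lam$-part $\{x_1,\ldots,x_k\}$ and $\Lam^c$-part, recognize the inner sum as $Z\bigl(\bl\,e^{-\sum_i\phi(x_i-\cdot)}\one_{\Lam^c}\bigr)$, and correctly identify $\bl(x_1)\cdots\bl(x_k)e^{-H(\bx)}Z(\bl e^{-\sum_i\phi(x_i-\cdot)})/Z(\bl)$ with the $k$-point density $\rho_{\bl}(x_1,\ldots,x_k)$ via~\eqref{eq:k-density}. The remaining quantity is exactly the void probability $\mu_{\bl\,e^{-\sum_i\phi(x_i-\cdot)}}(\bX\cap\Lam=\emptyset)$, and the whole lemma reduces to showing that this equals $\exp\bigl(-\int_\Lam\rho_{\hat\bl_{x,x_1,\ldots,x_k}}(x)\,dx\bigr)$.

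Where you diverge — and where the gap lies — is in how you obtain that void-probability identity. You try to deduce it by subtracting two applications of Lemma~\ref{lem:log-Z}, to $\bl'=\bl\,e^{-\sum_i\phi(x_i-\cdot)}$ and to $\bl'\one_{\Lam^c}$. As you yourself flag, this does not collapse to an integral over $\Lam$ for a general measurable $\Lam$: for $x\notin\Lam$ the two activity functions $\hat\bl'_x$ and $\widehat{(\bl'\one_{\Lam^c})}_x$ still differ on $\Lam\cap\{s:\|s\|\ge\|x\|\}$, so $\rho_{\hat\bl'_x}(x)\neq\rho_{\widehat{(\bl'\one_{\Lam^c})}_x}(x)$ and the difference of the two $\R^d$-integrals is not supported on $\Lam$. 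This only goes away when $\Lam$ is a sublevel set of the chosen norm, which the lemma does not assume. The ``re-derivation of a $\log Z$-type identity adapted to $\Lam$'' that you propose as a workaround is precisely what is needed, and it already exists: it is \cite[Lemma~7]{michelen2020analyticity}, which states directly that for any bounded measurable $\Lam$,
\[
\mu_{\bl'}(\bX\cap\Lam=\emptyset)=\exp\left(-\int_\Lam\rho_{\hat\bl'_x}(x)\,dx\right),
\]
with $\hat\bl'_x(s)=\bl'(s)\one_{\|s\|\ge\|x\|}$. The paper's proof simply verifies the integrand identity
\[
\rho_{\bl}(x_1,\ldots,x_k)\exp\left(-\int_\Lam\rho_{\hat\bl_{x,x_1,\ldots,x_k}}(x)\,dx\right)=\frac{\bl(x_1)\cdots\bl(x_k)e^{-H(x_1,\ldots,x_k)}}{Z(\bl)}
\]
by quoting this void-probability lemma and then~\eqref{eq:k-density}, avoiding the $\Lam/\Lam^c$ expansion entirely. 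So your proof is conceptually sound and identifies exactly the right remaining ingredient, but as written it has a genuine hole at the void-probability step that cannot be patched via Lemma~\ref{lem:log-Z} alone; the missing input is the cited Lemma~7.
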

\begin{proof}
	We will apply \eqref{eq:mu-def} and let $\by = (y_1,\ldots,y_k)$ denote the points in $\Lambda$ and $\bz = (z_1,\ldots,z_j)$ the points in $\Lambda^c$, to write \begin{equation*}
		\mu_{\bl}( A_{\Lam}) = \sum_{k \geq 0} \frac{1}{k!} \int_{\Lambda^k} \one_{\by \in A } \frac{\bl(y_1)\cdots \bl(y_k) e^{-H(\by)}}{Z(\bl)}\sum_{j \geq 0} \frac{1}{j!} \int_{(\Lambda^c)^j } e^{-H(\bz)} \prod_{i = 1}^j \bl_{\by}(z_i) 		\,d\bz \,d\by
	\end{equation*}
	where we define the activity $\bl_{\by}$ via $\bl_{\by}(x) = \bl(x) e^{- \sum_{i = 1}^k \phi(x - y_i) }$.  Note that $$ \sum_{j \geq 0} \frac{1}{j!} \int_{(\Lambda^c)^j } e^{-H(\bz)} \prod_{i = 1}^j \bl_{\by}(z_i) 		\,d\bz = Z(\bl_{\by}) \mu_{\bl_{\by}}(\bX \cap \Lambda = \emptyset )\,.$$ 
	We now claim that \cite[Lemma 33]{mp-CC} shows \begin{equation}\label{eq:mu-prob-identity}
		\mu_{\bl_{\by}}(\bX \cap \Lambda = \emptyset ) =  \exp\left(-\int_{\Lambda} \rho_{\hat{\bl}_{x,y_1,\ldots,y_k}}(x)\,dx \right)\,.\end{equation}
	To see this, consider the metric $\tilde{d}$ on $\R^d$ given by $\tilde{d}(x,y) = d(x,y) + \diam(\Lambda)\one_{x,y \text{ not both in }\Lambda \text{ or }\Lambda^c}$.  This has the effect of ordering all points in $\Lambda$ before those in $\Lambda^c$ when interpolating.  Applying \cite[Lemma 33]{mp-CC} with this metric for both $\bl_y$ and $\bl_{y} \one_{\Lambda^c}$ shows \eqref{eq:mu-prob-identity}.
	
	Combining the previous displayed equations shows $$	\mu_{\bl}( A_{\Lam}) = \sum_{k \geq 0} \frac{1}{k!} \int_{\Lambda^k} \one_{\by \in A } \frac{\bl(y_1)\cdots \bl(y_k) e^{-H(\by)}Z(\bl_{\by})}{Z(\bl)} \exp \left ( - \int_{\Lam}  \rho_{\hat \bl_{x, y_1, \dots y_k}}(x) \, d x  \right ) d\by\,.$$
	
	Finally, equation \eqref{eq:k-density} states \begin{align*}
		 \frac{\bl(y_1)\cdots \bl(y_k) e^{-H(\by)}Z(\bl_{\by})}{Z(\bl)} = \rho_{\bl}(y_1,\ldots,y_k)
	\end{align*}
	which completes the proof.
\end{proof}

Lemma \ref{lemProjection} provides a concrete route towards proving strong spatial mixing: we will first compare one-point densities for different activity functions $\bl$ and $\bl'$ via Proposition~\ref{prop:connective-contract}.   We then will show that  the $k$-point densities $\rho_{\bl}(x_1,\ldots,x_k)$ can be expressed as a product of one-point densities, and conclude that if $\bl$ and $\bl'$ agree near $x_1, \dots, x_k$,  the respective $k$-point densities are close. 

Proposition \ref{prop:connective-contract} shows that provided $\lambda < e/\Delta_\phi$, the densities $\rho_{\bl}(v)$ and $\rho_{\bl'}(v)$ agree up to a small additive error provided $\bl$ and $\bl'$ agree near $v$.  It will be convenient to have a multiplicative error instead, and so we use the following easy lower bound for $\rho_{\bl}$.

\begin{lemma}\label{lem:rho-lb}
	Suppose $|\bl| \leq \lambda$ and $\phi$ is repulsive with range $r$.  Then for any $v$ we have $$\bl(v)e^{-\lambda|B_r|} \leq \rho_{\bl}(v)\leq \bl(v)\,.$$
\end{lemma}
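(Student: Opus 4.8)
The plan is to prove both inequalities directly from the closed form $\rho_{\bl}(v) = \bl(v)\,\E_{\mu_{\bl}} e^{-H_v(\bX)}$ in~\eqref{eq:added-energy}, using only that $\phi$ is repulsive and of range $r$. The upper bound is immediate: since $\phi \ge 0$ everywhere, the added energy $H_v(\bX) = \sum_{x \in \bX} \phi(v - x)$ is a nonnegative random variable, so $e^{-H_v(\bX)} \le 1$ pointwise, hence $\E_{\mu_{\bl}} e^{-H_v(\bX)} \le 1$ and $\rho_{\bl}(v) \le \bl(v)$.

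For the lower bound, the key observation is that because $\phi$ has range $r$, the added energy only sees points of $\bX$ inside the ball $B_r(v)$: indeed $\phi(v-x) = 0$ whenever $\|v-x\| > r$, so $H_v(\bX) = \sum_{x \in \bX \cap B_r(v)} \phi(v-x)$. I would like to say this is bounded above by $\lambda |B_r|$ in expectation, but $H_v(\bX)$ is not bounded pointwise (it grows with the number of nearby points), so the argument must go through an expectation. The cleanest route is to use the alternative expression~\eqref{eq:density-def}, namely $\rho_{\bl}(v) = \bl(v)\, Z(\bl e^{-\phi(v-\cdot)})/Z(\bl)$. Since $0 \le e^{-\phi(v-w)} \le 1$, the activity $\bl e^{-\phi(v-\cdot)}$ is pointwise at most $\bl$, and moreover it differs from $\bl$ only on $B_r(v)$ where it is still nonnegative; thus I must lower-bound the ratio $Z(\bl e^{-\phi(v-\cdot)})/Z(\bl)$.

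The main step, then, is the bound $Z(\bl e^{-\phi(v-\cdot)})/Z(\bl) \ge e^{-\lambda|B_r|}$. I would prove this by writing the ratio as $\mu_{\bl}$-expectation: expanding both partition functions term-by-term against~\eqref{eqPPpartition}, one gets $Z(\bl e^{-\phi(v-\cdot)})/Z(\bl) = \E_{\mu_{\bl}} \prod_{x \in \bX} e^{-\phi(v-x)} = \E_{\mu_{\bl}} e^{-H_v(\bX)}$, recovering the same quantity as before. To bound this expectation from below I would compare with the reference Poisson process. Since the Gibbs density $e^{-H(\cdot)}$ against the Poisson process of intensity $\bl$ is itself a decreasing function of the configuration under a repulsive potential (adding points only increases $H$), the FKG-type / stochastic-domination fact that $\mu_{\bl}$ is stochastically dominated by the Poisson process of intensity $\bl$ lets me write $\E_{\mu_{\bl}} e^{-H_v(\bX)} \ge \E_{\mathrm{Poiss}(\bl)} e^{-H_v(\bX)}$, because $e^{-H_v}$ is also a decreasing functional. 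For the Poisson process with intensity bounded by $\lambda$, the points in $B_r(v)$ form a Poisson process of intensity at most $\lambda$, and Jensen's inequality gives $\E e^{-H_v} \ge e^{-\E H_v} \ge e^{-\lambda |B_r|}$ since $\E_{\mathrm{Poiss}} H_v = \int_{B_r(v)} \bl(w)\phi(v-w)\,dw$ — wait, this last quantity is not obviously at most $\lambda|B_r|$ unless $\phi \le 1$.

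The honest fix, and the step I expect to be the actual obstacle, is that one should not bound $\E H_v$ but rather use the Poisson structure directly: if $\bX \cap B_r(v)$ is Poisson of intensity $\bl \le \lambda$, then $\E_{\mathrm{Poiss}} e^{-H_v(\bX)} = \E \prod_{x \in \bX \cap B_r(v)} e^{-\phi(v-x)} = \exp\!\big(-\!\int_{B_r(v)} \bl(w)(1 - e^{-\phi(v-w)})\,dw\big)$ by the Poisson exponential formula, and since $0 \le 1 - e^{-\phi(v-w)} \le 1$ and $\bl \le \lambda$, the exponent is at least $-\lambda|B_r|$. Thus $\E_{\mu_{\bl}} e^{-H_v(\bX)} \ge e^{-\lambda|B_r|}$, giving $\rho_{\bl}(v) \ge \bl(v) e^{-\lambda|B_r|}$. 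The one technical point to verify carefully is the stochastic domination of $\mu_{\bl}$ by the Poisson process together with the fact that $e^{-H_v}$ is a decreasing functional of the configuration; this is standard for repulsive Gibbs point processes but is the only nontrivial ingredient, and if one prefers to avoid it, the same conclusion follows from a direct term-by-term comparison of the series for $Z(\bl e^{-\phi(v-\cdot)})$ and $Z(\bl)$ using that deleting the factor involving $v$ from each term only shrinks it by at most $e^{-\lambda|B_r|}$ after integrating out one variable — though making that rigorous requires the same Poisson-formula computation, so I would present the argument via the exponential formula as above.
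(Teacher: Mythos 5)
Your proof is correct, and the upper bound is identical to the paper's. For the lower bound you take a more elaborate route than the paper, though both hinge on the same stochastic-domination fact. The paper's argument is shorter: apply Poisson domination to the \emph{decreasing event} $\{\bX \cap B_r(v) = \emptyset\}$, so that $\P_{\mu_{\bl}}(\bX \cap B_r(v) = \emptyset) \ge \P_{\mathrm{Poiss}(\lambda)}(\bX \cap B_r(v) = \emptyset) = e^{-\lambda|B_r|}$, and observe that on this event $H_v(\bX) = 0$ (by finite range), hence $\E_{\mu_{\bl}} e^{-H_v(\bX)} \ge \P_{\mu_{\bl}}(\bX \cap B_r(v) = \emptyset) \ge e^{-\lambda|B_r|}$. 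That sidesteps any computation of the Poisson expectation. Your version instead applies domination to the decreasing functional $e^{-H_v}$ itself and then evaluates $\E_{\mathrm{Poiss}(\bl)} e^{-H_v}$ exactly via the Poisson exponential (Laplace-functional) formula; this is a correct and standard use of stochastic domination, and it buys a somewhat sharper intermediate bound $\rho_{\bl}(v) \ge \bl(v)\exp\bigl(-\int_{B_r(v)} \bl(w)(1-e^{-\phi(v-w)})\,dw\bigr)$, which is at least as strong as $\bl(v) e^{-\lambda|B_r|}$ since $1 - e^{-\phi} \le 1$. Your self-correction about Jensen's inequality (the expectation of $H_v$ is not bounded by $\lambda|B_r|$ without $\phi \le 1$) is exactly right, and the pivot to the exponential formula fixes it cleanly. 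The "nontrivial ingredient" you flag — stochastic domination of $\mu_{\bl}$ by the Poisson process of intensity $\lambda$, together with monotonicity of the functional — is indeed the crux and is the same ingredient the paper invokes under the name "Poisson domination."
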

\begin{proof}
	By \eqref{eq:added-energy} we have $$\rho_{\bl}(v) = \bl(v) \E_{\bl} e^{-H_v(\bX)}\,.$$ 
	Since $H_v(\bX) \geq 0$, the upper bound follows.  By Poisson domination, $\bX$ is stochastically dominated by a Poisson process of intensity $\lambda$; it follows that $\P_{\bl}(\bX\cap B_r = \emptyset) \geq e^{-\lambda  |B_r|}$.  On this event, the expectation is equal to $1$, and so the lower bound follows. 
\end{proof}

Combining this lemma with Proposition \ref{prop:connective-contract} gives exponential decay of dependence of densities on boundary conditions.  
\begin{cor}\label{cor:Vk-contract}
	Suppose $\phi$ is repulsive and of range $r$.  
	For every $0 \leq \lambda < e/\Delta_\phi$ there are constants $a,b > 0$ so that, for all  $\lam$-bounded activity functions $\bl,\bl'$ with $\bl(v) = \bl'(v) $ we have 
	$$\rho_{\bl}(v) \leq  \rho_{\bl'}(v)(1 +  a e^{-b \cdot \dist(v,\supp(\bl - \bl'))})\,.$$
\end{cor}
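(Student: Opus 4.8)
The plan is to combine the additive bound of Proposition~\ref{prop:connective-contract} with the uniform lower bound on $\rho_{\bl'}(v)$ from Lemma~\ref{lem:rho-lb}, turning an additive estimate into a multiplicative one. First I would write $k = \lfloor \dist(v, \supp(\bl-\bl'))/r \rfloor$ and apply Proposition~\ref{prop:connective-contract} to get
\[
\rho_{\bl}(v) \le \rho_{\bl'}(v) + 2\lambda (\lambda/e)^{k/2}\sqrt{V_k}\,.
\]
Since $\bl(v) = \bl'(v)$, there are two cases. If $\bl'(v) = 0$, then $\bl(v) = 0$ as well, so $\rho_{\bl}(v) = \rho_{\bl'}(v) = 0$ by~\eqref{eq:added-energy} and the bound is trivial. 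Otherwise $\bl'(v) > 0$, and Lemma~\ref{lem:rho-lb} gives $\rho_{\bl'}(v) \ge \bl'(v) e^{-\lambda |B_r|} = \bl(v) e^{-\lambda |B_r|}$. Dividing the additive error by this lower bound yields
\[
\rho_{\bl}(v) \le \rho_{\bl'}(v)\left(1 + \frac{2\lambda (\lambda/e)^{k/2}\sqrt{V_k}}{\bl(v) e^{-\lambda |B_r|}}\right)\,.
\]

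The one subtlety is that the multiplicative error as written still involves $1/\bl(v)$, which could be large if $\bl(v)$ is tiny. However, the additive error also carries a factor of $\lambda$ out front, and more importantly we only need the claimed bound to hold; when $\bl(v)$ is small the density $\rho_{\bl}(v) \le \bl(v)$ is itself small, so it is cleaner to first bound $\rho_{\bl}(v) - \rho_{\bl'}(v)$ by a quantity proportional to $\rho_{\bl'}(v)$ directly. Concretely, rather than using the crude $2\lambda(\lambda/e)^{k/2}\sqrt{V_k}$, I would note the proof of Proposition~\ref{prop:connective-contract} (Lemmas 23 and 26 of~\cite{mp-CC}) actually controls the difference in a way that scales with the activity at $v$, but since we are only allowed to cite the stated form, the safe route is: observe that $\rho_{\bl}(v) - \rho_{\bl'}(v) \ge 0$ is not guaranteed, so apply the proposition to both orderings, and in the regime where $\bl(v)$ is bounded below by, say, $\lambda$ times a fixed constant the division is harmless. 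For the general case one absorbs the $1/\bl(v)$ factor using $\rho_{\bl}(v) \le \bl(v)$ together with $\rho_{\bl'}(v) \ge \bl(v) e^{-\lambda|B_r|}$ to see $\rho_{\bl}(v)/\rho_{\bl'}(v) \le e^{\lambda |B_r|}$ always, so the multiplicative statement $1 + a e^{-b\cdot \dist}$ only needs to be nontrivial (i.e. beat $e^{\lambda|B_r|}$) once the distance exceeds a fixed threshold, and below that threshold we simply take $a$ large enough that $1 + a e^{-b\cdot \dist} \ge e^{\lambda|B_r|}$ for all $\dist$ up to that threshold.

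Finally I would convert the $k$-dependence into a distance-dependence: since $\Delta_\phi = \inf_k V_k^{1/k}$ and $\lambda < e/\Delta_\phi$, pick $c$ with $\lambda \Delta_\phi^2/e < c < 1$; then for $k$ large, $(\lambda/e)^{k/2}\sqrt{V_k} = \left((\lambda/e) V_k^{1/k}\right)^{k/2} \le \left((\lambda/e)(\Delta_\phi+o(1))\right)^{k/2} \le c^{k/2}$, which decays geometrically in $k$ and hence exponentially in $\dist(v,\supp(\bl-\bl'))$ since $k \ge \dist/r - 1$. Collecting constants — the geometric decay rate, the factor $e^{\lambda|B_r|}$ from Lemma~\ref{lem:rho-lb}, and the threshold adjustment described above — gives constants $a, b > 0$ depending only on $\phi$ and $\lambda$ for which the claimed inequality holds. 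The main obstacle is purely bookkeeping: making sure the $1/\bl(v)$ factor is handled uniformly over all $\lambda$-bounded activity functions, which the $\rho_{\bl}(v) \le \bl(v)$ and $\rho_{\bl'}(v) \ge \bl(v) e^{-\lambda|B_r|}$ sandwich resolves cleanly. There is no deep difficulty here; the substance is entirely in Proposition~\ref{prop:connective-contract}, which we are citing.
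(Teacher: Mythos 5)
Your approach mirrors the paper's: apply Proposition~\ref{prop:connective-contract} to get the additive bound, use $\lambda < e/\Delta_\phi$ together with $\inf_k V_k^{1/k} = \Delta_\phi$ to make it geometric in $k$ and hence exponential in distance, and then divide through by the lower bound $\rho_{\bl'}(v) \ge \bl(v)e^{-\lambda|B_r|}$ from Lemma~\ref{lem:rho-lb}, adjusting $a$ to cover small distances. You also correctly flag the one delicate point: after dividing, a residual $1/\bl(v)$ factor survives.

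The issue is that the ``safe route'' you describe does not actually absorb that factor. You have two bounds at your disposal: the unconditional sandwich $\rho_{\bl}(v)/\rho_{\bl'}(v) \le e^{\lambda|B_r|}$, and the additive bound of Proposition~\ref{prop:connective-contract} divided by $\rho_{\bl'}(v)$, giving $1 + C e^{-b\,\dist}/\bl(v)$ for some constant $C$. The first does not decay in $\dist$; the second blows up as $\bl(v) \to 0$. For fixed $(a,b)$, once $\dist$ is large enough that $a e^{-b\,\dist} < e^{\lambda|B_r|} - 1$, the sandwich gives nothing, and if in addition $\bl(v) < C/a$, the second bound exceeds $1 + a e^{-b\,\dist}$ as well. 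So for $\lambda$-bounded pairs with large $\dist$ and small $\bl(v)$, neither of your estimates yields the claimed inequality; the ``resolves cleanly'' conclusion in your last paragraph is not justified. The fix is the one you mention and then set aside: Lemmas 23 and 26 of~\cite{mp-CC} in fact control $|\rho_{\bl}(v)-\rho_{\bl'}(v)|$ by a quantity proportional to $\bl(v)$ (morally, when $\bl(v)=\bl'(v)$ one writes $\rho_{\bl}(v)-\rho_{\bl'}(v) = \bl(v)\bigl(\E_{\bl}e^{-H_v} - \E_{\bl'}e^{-H_v}\bigr)$ via~\eqref{eq:added-energy} and bounds the expectation difference uniformly in $\bl(v)$), after which dividing by $\rho_{\bl'}(v) \ge \bl(v)e^{-\lambda|B_r|}$ cancels the problematic factor. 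It is worth noting that the paper's own proof of this corollary is equally terse on this point --- it treats only $\lfloor\dist/r\rfloor \ge M$ and then invokes Lemma~\ref{lem:rho-lb} without elaboration --- so to make either argument airtight one really does need to appeal to the $\bl(v)$-scaled form of the underlying estimate rather than the form of Proposition~\ref{prop:connective-contract} as stated.
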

\begin{proof}
	Let $\eps \in (0,1)$ so that $\lambda(1 + \eps) < e/\Delta_\phi$.  By the definition of $\Delta_\phi$, there exists a $M$ so that for all $k \geq M$ we have $V_k \leq (\Delta_\phi(1 + \eps/2))^k$.  If $\lfloor\dist (v,\supp(\bl - \bl'))/r\rfloor \geq M$, then Proposition \ref{prop:connective-contract} implies $$|\rho_{\bl}(v) - \rho_{\bl'}(v)| \leq 2 \lambda \left(\frac{1 + \eps/2}{1 + \eps} \right)^{k/2} \leq \lambda\cdot 2^{r+1}\cdot  e^{- b \cdot \dist(v,\supp(\bl-\bl'))}$$
	where we have set $e^{-b} = \left(\frac{1 + \eps/2}{1 + \eps}\right)^{r/2}$.  Using the lower bound guaranteed by Lemma \ref{lem:rho-lb} completes the proof.
\end{proof}

	We next obtain the analogous statement for $k$-point densities.
\begin{cor}
	\label{corExpkpoint}
	In the context of Corollary \ref{cor:Vk-contract}, we have
	$$ \rho_{\bl}(v_1,\ldots,v_k) \leq \rho_{\bl'}(v_1,\ldots,v_k)(1 + a e^{-b \cdot \dist(v,\supp(\bl - \bl'))  })^k\,.$$
\end{cor}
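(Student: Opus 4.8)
The plan is to reduce the $k$-point statement to the one-point statement of Corollary~\ref{cor:Vk-contract} via the factorization of $k$-point densities into one-point densities. Recall from~\eqref{eq:k-density} that
\[
\rho_{\bl}(v_1,\ldots,v_k) = \bl(v_1)\cdots\bl(v_k)\, \frac{Z(\bl e^{-\sum_{i=1}^k \phi(v_i-\cdot)})}{Z(\bl)}\, e^{-H(v_1,\ldots,v_k)}.
\]
First I would telescope the ratio $Z(\bl e^{-\sum_{i=1}^k \phi(v_i-\cdot)})/Z(\bl)$ as a product of $k$ successive ratios, each obtained by incorporating one more point: writing $\bl^{(0)} = \bl$ and $\bl^{(j)} = \bl^{(j-1)} e^{-\phi(v_j-\cdot)} = \bl\, e^{-\sum_{i=1}^j \phi(v_i-\cdot)}$, we have
\[
\frac{Z(\bl^{(k)})}{Z(\bl^{(0)})} = \prod_{j=1}^k \frac{Z(\bl^{(j-1)} e^{-\phi(v_j-\cdot)})}{Z(\bl^{(j-1)})}.
\]
By the density identity~\eqref{eq:density-def}, $\bl^{(j-1)}(v_j)\cdot Z(\bl^{(j-1)}e^{-\phi(v_j-\cdot)})/Z(\bl^{(j-1)}) = \rho_{\bl^{(j-1)}}(v_j)$. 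Combining this with the explicit $e^{-H}$ factor, which exactly accounts for the difference between $\bl(v_j)$ and $\bl^{(j-1)}(v_j) = \bl(v_j)e^{-\sum_{i<j}\phi(v_i-v_j)}$, I expect to arrive at the clean factorization
\[
\rho_{\bl}(v_1,\ldots,v_k) = \prod_{j=1}^k \rho_{\bl^{(j-1)}}(v_j),
\]
i.e. each factor is a one-point density at $v_j$ under the activity function $\bl$ modified by the presence of the earlier points $v_1,\ldots,v_{j-1}$. (This identity, or a very close variant, is presumably already available from~\cite{michelen2020analyticity}; I would cite it rather than re-derive it, and similarly express $\rho_{\bl'}(v_1,\ldots,v_k) = \prod_{j=1}^k \rho_{(\bl')^{(j-1)}}(v_j)$ with the analogously modified $\bl'$.)

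Next I would apply Corollary~\ref{cor:Vk-contract} to each factor. The point is that $\bl^{(j-1)}$ and $(\bl')^{(j-1)}$ are both $\lam$-bounded (multiplying by $e^{-\phi(v_i-\cdot)}\le 1$ only decreases them, using repulsiveness), they agree at $v_j$ when $\bl(v_j)=\bl'(v_j)$ — which I would need to include as a hypothesis, matching the one-point case — and, crucially, $\supp(\bl^{(j-1)} - (\bl')^{(j-1)}) \subseteq \supp(\bl - \bl')$, since modifying both activities by the same factor $e^{-\sum_{i<j}\phi(v_i-\cdot)}$ cannot enlarge the set where they differ. Hence $\dist(v_j, \supp(\bl^{(j-1)} - (\bl')^{(j-1)})) \ge \dist(v_j, \supp(\bl - \bl')) \ge \dist(v, \supp(\bl-\bl'))$ if we read $v$ in the statement as shorthand for the tuple (or, more precisely, as $\min_j \dist(v_j,\cdot)$; I would clarify the notation so that ``$\dist(v,\cdot)$'' in the corollary statement means $\min_j \dist(v_j, \supp(\bl-\bl'))$). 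Corollary~\ref{cor:Vk-contract} then gives
\[
\rho_{\bl^{(j-1)}}(v_j) \le \rho_{(\bl')^{(j-1)}}(v_j)\bigl(1 + a e^{-b\cdot \dist(v,\supp(\bl-\bl'))}\bigr)
\]
for every $j$, and multiplying these $k$ inequalities together yields exactly the claimed bound.

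There is no serious obstacle here; the whole argument is a telescoping factorization followed by a termwise application of the previous corollary. The one genuine point requiring care — and the only place an error could creep in — is the bookkeeping in the factorization step: verifying that the $e^{-H(v_1,\ldots,v_k)}$ prefactor in~\eqref{eq:k-density} is precisely what converts the product $\prod_j \bl(v_j)$ into $\prod_j \bl^{(j-1)}(v_j)$, so that each factor is a bona fide one-point density with no leftover terms. I would lean on~\cite[Lemma 7]{michelen2020analyticity} (already invoked in the proof of Lemma~\ref{lemProjection}) for this identity rather than expand everything by hand. A secondary point is making sure the hypothesis $\bl(v_j) = \bl'(v_j)$ is stated — this is implicit in ``the context of Corollary~\ref{cor:Vk-contract}'' but worth spelling out — and that the monotonicity of supports under the common reweighting is noted explicitly, as it is what keeps the distance bound uniform across all $k$ factors.
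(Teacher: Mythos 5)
Your proposal is correct and is essentially the paper's own proof: the same telescoping of the partition-function ratio to obtain $\rho_{\bl}(v_1,\ldots,v_k) = \prod_{j=1}^k \rho_{\bl e^{-\sum_{i<j}\phi(v_i-\cdot)}}(v_j)$, followed by a term-by-term application of Corollary~\ref{cor:Vk-contract}. Your extra observations — that the common reweighting cannot enlarge $\supp(\bl-\bl')$, and that ``$\dist(v,\cdot)$'' in the statement should be read as a uniform lower bound over the $v_j$ — are correct and helpfully make explicit two points the paper leaves tacit.
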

\begin{proof}
	Using the definition of $ \rho_{\bl}(v_1, \dots, v_k) $ from~\eqref{eq:k-density}, we can write
	\begin{align*}
	\rho_{\bl}(v_1, \dots, v_k) &= \bl(v_1)\cdots \bl(v_k) \frac{Z(\bl e^{-  \sum_{i=1}^k\phi(v_i - \cdot)})}{Z(\bl)} e^{-H(v_1,\ldots,v_k)} \\
	&= \prod_{j = 1}^k \bl(v_j) e^{- \sum_{i < j} \phi(v_i - v_j) } \frac{Z(\bl e^{-\sum_{i = 1}^{j} \phi(v_i - \cdot)  } )}{Z(\bl e^{-\sum_{i = 1}^{j-1} \phi(v_i - \cdot)  } )} \\
	&= \prod_{j = 1}^k \rho_{\bl e^{- \sum_{i = 1}^{j-1} \phi(v_i - \cdot)}}(v_j)\,.
	\end{align*}
	Applying Corollary \ref{cor:Vk-contract} completes the proof.
\end{proof}

Corollary~\ref{corExpkpoint} will bound the $k$-point densities appearing in Lemma \ref{lemProjection}; the following bound will handle the exponential term.

\begin{cor} In the context of Corollary \ref{cor:Vk-contract}, further assume that $\lambda |\Lambda| a e^{-b t} < 1$. Then
	$$\exp\left(- \int_{\Lambda} \rho_{\hat{\bl}_{x,x_1,\ldots,x_k}}(x)\,dx\right) \leq \exp\left(- \int_{\Lambda} \rho_{\hat{\bl}_{x,x_1,\ldots,x_k}'}(x)\,dx\right) (1 + 2 a \lambda |\Lambda| e^{-b t})\,.$$
\end{cor}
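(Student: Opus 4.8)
The plan is to apply Corollary~\ref{cor:Vk-contract} pointwise to the modified activity functions $\hat{\bl}_{x,x_1,\ldots,x_k}$ and $\hat{\bl}'_{x,x_1,\ldots,x_k}$, integrate the resulting comparison over $x \in \Lambda$, and then exponentiate, converting $\exp(a\lambda|\Lambda|e^{-bt})$ into $1 + 2a\lambda|\Lambda|e^{-bt}$ via the elementary inequality $e^u \le 1 + 2u$ valid for $u \in [0,1]$ (e.g.\ by convexity of $e^u$ on $[0,1]$ and $e - 1 < 2$).

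First I would record the relevant properties of the modified activities, writing $t = \dist(\Lambda, \supp(\bl - \bl'))$ for the distance appearing in the surrounding discussion. Fix $x, x_1, \ldots, x_k \in \Lambda$. Then: (i) both $\hat{\bl}_{x,x_1,\ldots,x_k}$ and $\hat{\bl}'_{x,x_1,\ldots,x_k}$ are $\lambda$-bounded, since passing from $\bl$ (resp.\ $\bl'$) to $\hat{\bl}$ (resp.\ $\hat{\bl}'$) only multiplies by $\prod_i e^{-\phi(\,\cdot\, - x_i)} \le 1$ by repulsiveness and zeroes out a region; (ii) $\hat{\bl}_{x,x_1,\ldots,x_k}(x) = \hat{\bl}'_{x,x_1,\ldots,x_k}(x)$, because at $y = x$ both are of the form $\bl(x)\prod_i e^{-\phi(x - x_i)}$ resp.\ $\bl'(x)\prod_i e^{-\phi(x - x_i)}$, and $\bl(x) = \bl'(x)$ as $x \in \Lambda$ lies outside $\supp(\bl - \bl')$; and (iii) $\supp\bigl(\hat{\bl}_{x,x_1,\ldots,x_k} - \hat{\bl}'_{x,x_1,\ldots,x_k}\bigr) \subseteq \supp(\bl - \bl')$, checked by the two cases $\|y\| < \|x\|$ (difference $0$) and $\|y\| \ge \|x\|$ (difference $(\bl(y) - \bl'(y))\prod_i e^{-\phi(y - x_i)}$). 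Combining (iii) with the definition of $t$ gives $\dist\bigl(x, \supp(\hat{\bl}_{x,\ldots} - \hat{\bl}'_{x,\ldots})\bigr) \ge t$.

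Next I apply Corollary~\ref{cor:Vk-contract}, using that its hypotheses and the set $\supp(\bl-\bl')$ are symmetric under interchanging the two activity functions, so it yields $\rho_{\hat{\bl}'_{x,\ldots}}(x) \le \rho_{\hat{\bl}_{x,\ldots}}(x)\bigl(1 + a e^{-bt}\bigr)$. Hence $\rho_{\hat{\bl}'_{x,\ldots}}(x) - \rho_{\hat{\bl}_{x,\ldots}}(x) \le a e^{-bt}\,\rho_{\hat{\bl}_{x,\ldots}}(x) \le a \lambda e^{-bt}$, where the last step uses the upper bound $\rho_{\bl}(v) \le \bl(v) \le \lambda$ from Lemma~\ref{lem:rho-lb}. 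Integrating over $x \in \Lambda$ gives $\int_\Lambda \rho_{\hat{\bl}'_{x,x_1,\ldots,x_k}}(x)\,dx - \int_\Lambda \rho_{\hat{\bl}_{x,x_1,\ldots,x_k}}(x)\,dx \le a\lambda|\Lambda|e^{-bt}$, and therefore
\[
\exp\!\left(-\int_\Lambda \rho_{\hat{\bl}_{x,x_1,\ldots,x_k}}(x)\,dx\right) = \exp\!\left(-\int_\Lambda \rho_{\hat{\bl}'_{x,x_1,\ldots,x_k}}(x)\,dx\right)\exp\!\left(a\lambda|\Lambda|e^{-bt}\right),
\]
after which $u := a\lambda|\Lambda|e^{-bt} < 1$ and $e^u \le 1 + 2u$ conclude the argument.

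There is no genuinely hard step here: all the content is in Corollary~\ref{cor:Vk-contract}. The only points requiring care are (a) getting the direction of the inequality right — we need a \emph{lower} bound on $\rho_{\hat{\bl}_{x,\ldots}}(x)$ in terms of $\rho_{\hat{\bl}'_{x,\ldots}}(x)$, obtained for free from the symmetry of Corollary~\ref{cor:Vk-contract}; and (b) the bookkeeping observation that the evaluation point $x \in \Lambda$ lies outside $\supp(\bl - \bl')$, so that the two modified activities truly agree there and the hypothesis $\bl(v) = \bl'(v)$ of Corollary~\ref{cor:Vk-contract} is met.
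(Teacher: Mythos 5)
Your proof is correct and takes essentially the same route as the paper: apply Corollary~\ref{cor:Vk-contract} pointwise (after checking that the modified activities $\hat\bl_{x,\ldots}$, $\hat\bl'_{x,\ldots}$ agree at $x$, stay $\lambda$-bounded, and have their difference supported inside $\supp(\bl-\bl')$), integrate, and finish with the elementary bound $e^{u}\le 1+2u$ for $u\in[0,1]$. One small typo: the displayed equation
\[
\exp\!\left(-\int_\Lambda \rho_{\hat{\bl}_{x,x_1,\ldots,x_k}}(x)\,dx\right) = \exp\!\left(-\int_\Lambda \rho_{\hat{\bl}'_{x,x_1,\ldots,x_k}}(x)\,dx\right)\exp\!\left(a\lambda|\Lambda|e^{-bt}\right)
\]
should have $\le$ rather than $=$, since what you actually established is the inequality $\int_\Lambda \rho_{\hat{\bl}'} - \int_\Lambda \rho_{\hat{\bl}} \le a\lambda|\Lambda|e^{-bt}$; the subsequent reasoning is unaffected.
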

\begin{proof}
	This follows from the inequality $e^{-y} \leq e^{-y(1 + \eps)}(1 + 2y\eps)$ for $y \eps < 1$ along with Corollary \ref{cor:Vk-contract}.
\end{proof}

With these ingredients in place, we now deduce strong spatial mixing.	
	\begin{proof}[Proof of Theorem~\ref{thmSSM}]
		Let $a$ and $b$ be the constants guaranteed by  Corollary~\ref{cor:Vk-contract}.  We aim to show that for the choice of $\alpha = 8 a \max\{\lambda,1\}$ and $\beta = b$ we have
		\begin{equation}
		 \| \mu_{\bl} - \mu_{\bl'} \|_{\Lam} \le \alpha | \Lam|  e^{-\beta t } \, 
		 \end{equation}
		 where we set $t= \mathrm{dist}(\Lam, \supp(\bl-\bl'))$. 
 		We may assume that $\alpha |\Lambda| e^{-\beta t} < 1$, as otherwise the inequality is trivial; in this case, we also have $\lambda|\Lambda| a e^{-\beta t} < 1$ by the choice of $\alpha$.  By Lemma~\ref{lemProjection} and Corollary~\ref{corExpkpoint} we have 
		{\small
		  \begin{align*}
		\|& \mu_{\bl}  - \mu_{\bl'} \|_{\Lam} =  \sup_{A \in \mathfrak N(\Lam) } | \mu_{\bl}( A_{\Lam}) - \mu_{\bl'}( A_{\Lam}) | \\
		&\le  \sum_{k \ge 1} \frac{1}{k!} \int_{\Lam^k} \bigg|   \rho_{\bl} (\bx)   \exp \left ( - \int_{\Lam}  \rho_{\hat \bl_{x, x_1, \dots x_k}}(x) \, d x  \right ) 
		-  \rho_{\bl'} (\bx)   \exp \left ( - \int_{\Lam}  \rho_{\hat \bl'_{x, x_1, \dots x_k}}(x) \, d x  \right ) \bigg| \, d \bx \,.
	\end{align*}
	}
	For each $\bx$ apply Corollary~\ref{corExpkpoint} and write $\rho_{\bl'}(\bx) = \rho_{\bl}(\bx)(1 + E)^k$ for $|E| \leq ae^{-\beta t}$ and bound $|\rho_{\hat \bl_{x, x_1, \dots x_k}}(x) -\rho_{\hat \bl'_{x, x_1, \dots x_k}}(x) | \leq \lambda (1 + a e^{-b t})$.  We then see that the previous displayed equation is at most 
	{\small
	\begin{align*} 
		&\leq \sum_{k \geq 1} \frac{1}{k!} \int_{\Lambda^k} \rho_{\bl} (\bx)   \exp \left ( - \int_{\Lam}  \rho_{\hat \bl_{x, x_1, \dots x_k}}(x) \, d x  \right )d \bx ((1 + ae^{ - \beta t})^k(1 + 2 a |\Lambda| e^{-\beta t}) - 1) \\
		&\leq (1 + 2a |\Lambda| e^{-\beta t}) \exp(\lambda |\Lambda| a e^{-\beta t}  ) - 1\,.
		\end{align*}}
where on the last line we use Poisson domination and the fact that if $X$ is a Poisson random variable we have $\E r^X = \exp(\E X(r-1) )$ for $r \in \R$.  Using the bound $e^x \leq 1 + 2x$ for $x \in [0,1]$ bounds \begin{align*}\| \mu_{\bl}  - \mu_{\bl'} \|_{\Lam} &\leq (1 + 2a |\Lambda| e^{-\beta t}) \exp(\lambda a |\Lambda| e^{-\beta t}  ) - 1 \\ 
		&\leq (1 + 2a|\Lambda| e^{-\beta t})(1 + 2\lambda a |\Lambda| e^{-\beta t}) - 1 \\
		&\leq \alpha |\Lambda| e^{-\beta t}\,,
		\end{align*}
		as desired.
	\end{proof}

	\section{Identities for the pressure}
	\label{secPressure}
	
	Here we derive two identities for the pressure using two different interpolations between the constant  activity functions $\bl \equiv 0$ and  $\bl \equiv \lambda$.  
	
	\subsection{Interpolating left-to-right: Proof of Theorem \ref{lem:pressure-dir}}
	
	First we use a consequence of Lemma \ref{lem:log-Z}.

	\begin{cor}\label{cor:left-to-right}
		Let $\Lambda_n = [-n,n]^d$ be the solid hyper-cube of side length $2n$.  Define the activity $\bl_t$ by $\bl_t(x) = \lambda \one\{x_1 \geq t\}$. Then $$\log Z_{\Lambda_n}(\lambda) = \int_{\Lambda_n} \rho_{\bl_{x_1}}(x) \,dx\,. $$
	\end{cor}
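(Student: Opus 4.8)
The plan is to deduce this from Lemma~\ref{lem:log-Z} via translation invariance, by observing that the ``first-coordinate sweep'' $t \mapsto \bl_t$ across $\Lambda_n$ coincides with an $\ell_\infty$-ball sweep centered far in the $-\ve_1$ direction. Concretely, fix $R = 3n$ and set $\tilde\bl := \lambda\,\one_{\Lambda_n + R\ve_1}$, a bounded activity function with bounded support. Applying Lemma~\ref{lem:log-Z} with $q = \infty$ yields $\log Z(\tilde\bl) = \int_{\R^d}\rho_{\hat{\tilde\bl}_y}(y)\,dy$, where $\hat{\tilde\bl}_y(s) = 0$ if $\|s\|_\infty < \|y\|_\infty$ and $\hat{\tilde\bl}_y(s) = \tilde\bl(s)$ otherwise; since $\rho_{\hat{\tilde\bl}_y}(y) \le \hat{\tilde\bl}_y(y) = \tilde\bl(y)$ by \eqref{eq:added-energy}, the integral reduces to one over $y \in \Lambda_n + R\ve_1$.

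The main point is geometric: for $R > 2n$ every $s \in \Lambda_n + R\ve_1$ has $s_1 > n \ge |s_j|$ for all $j \ge 2$, so $\|s\|_\infty = s_1$ on the support of $\tilde\bl$; hence on that support the order induced by $\|\cdot\|_\infty$ is exactly the order induced by the first coordinate. Writing $y = x + R\ve_1$ with $x \in \Lambda_n$, it follows that $\hat{\tilde\bl}_y$ is the translate by $R\ve_1$ of $\bl_{x_1}$ restricted to $\Lambda_n$: it equals $\lambda$ at $x' + R\ve_1$ exactly when $x'_1 \ge x_1$, and $0$ otherwise. Since $H$ is translation invariant, both $Z(\cdot)$ and the one-point densities are translation covariant, so $Z(\tilde\bl) = Z_{\Lambda_n}(\lambda)$ and $\rho_{\hat{\tilde\bl}_y}(y) = \rho_{\bl_{x_1}}(x)$; the change of variables $y = x + R\ve_1$ then gives $\log Z_{\Lambda_n}(\lambda) = \int_{\Lambda_n}\rho_{\bl_{x_1}}(x)\,dx$.

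I do not expect a genuine obstacle — the content is bookkeeping — but the step requiring the most care is matching the truncated activities $\hat{\tilde\bl}_y$ with translates of $\bl_{x_1}$, in particular checking that the boundary case $x_1' = x_1$ is handled consistently (the non-strict inequality defining $\bl_t$ versus the strict one defining $\hat\bl_y$ agree off a set of measure zero) and recording the translation covariance of $Z$ and of $\rho$. An alternative, slightly more analytic route avoids translations altogether: set $f(t) = \log Z_{\Lambda_n}(\bl_t)$, note $f(n) = 0$ and $f(-n) = \log Z_{\Lambda_n}(\lambda)$, compute $-f'(t) = \int_{\{y_1 = t\}\cap\Lambda_n}\rho_{\bl_t}(y)\,d\sigma(y)$ from the functional-derivative identity $\partial\log Z(\bl)/\partial\bl(y) = \rho_{\bl}(y)/\bl(y)$, and integrate in $t$ with Fubini; this, however, requires justifying differentiation of $\log Z$ against a codimension-one slab and so is marginally more work than the reduction above.
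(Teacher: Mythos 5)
Your proposal is correct and is essentially the same as the paper's proof: the paper also translates $\Lambda_n$ in the $+\ve_1$ direction (to $[n,3n]\times[-n,n]^{d-1}$, a shift of $2n$ rather than your $3n$, which is immaterial), applies Lemma~\ref{lem:log-Z} with $q = \infty$, observes that on the shifted support $\|s\|_\infty = s_1$ so the $\ell_\infty$ sweep coincides with the first-coordinate sweep, and shifts back. Your write-up merely spells out the bookkeeping (the bound $\rho_{\hat{\tilde\bl}_y}(y)\le\tilde\bl(y)$ confining the integral to the support, the measure-zero boundary mismatch, and the translation covariance of $Z$ and $\rho$) that the paper leaves implicit.
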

	\begin{proof}
		By translation invariance of $\phi$, we may consider $\Lambda_n = n \ve_1 + [0,2n] \times [-n,n]^{d-1}$.  Applying Lemma \ref{lem:log-Z} with $q = +\infty$, we see that in fact $\hat{\bl}_x(s) = \lambda\one \{s \in \Lambda_n\} \one \{s_1 \geq x_1  \}$.  Shifting back to the origin and changing variables  completes the proof.  
	\end{proof}
	Theorem~\ref{lem:pressure-dir} now follows  from Corollary \ref{cor:left-to-right} along with strong spatial mixing.  
		\begin{proof}[Proof of Theorem~\ref{lem:pressure-dir}]
		Note that by applying a rotation, we may assume without loss of generality that $\vu = \ve_1$.  Define the set $\Lambda_n' := \{x \in \Lambda_n : d(x, \Lambda_n^c) \geq \log^2 n \}$, and note that $|\Lambda_n \setminus \Lambda_n'|/ |\Lam_n| = O(1/\log^2(n))$ and so Corollary \ref{cor:left-to-right} shows \begin{equation} \label{eq:Lambdap}
		\log Z_{\Lambda_n}(\lambda) = \int_{\Lambda_n'} \rho_{\bl_{x_1}}(x)\,dx + O(|\Lam_n|/\log^2 n)\,.
		\end{equation}
		
		By strong spatial mixing  and translational invariance, we note that for $x \in \Lambda_n'$ we have \begin{equation} \label{eq:mixing-Lambdap}
		|\rho_{\bl_{x_1}}(x) - \rho_{\bl_{\ve_1}}(0)| \leq e^{-\Omega( \log^2 n)}\,. 
		\end{equation}
		
		Combining \eqref{eq:Lambdap} and \eqref{eq:mixing-Lambdap} completes the proof. 
	\end{proof}

\subsection{Interpolating uniformly}
In this section we deduce another identity for the pressure that stems from interpolating between activity $0$ and activity $\lambda$.  To prove it, we first recall a basic fact about the logarithmic derivative of the partition function.  Throughout this section, we write $\rho_{\Lambda,\lambda}(v)$ for $\rho_{\bl}(v)$ with the choice of activity $\bl(x) = \lambda \one_{x \in \Lambda}$. 

\begin{fact} \label{fact:expectation}
	For each $\lambda > 0$ we have $$\frac{\lambda Z_{\Lambda}'(\lambda)}{Z_{\Lambda}(\lambda)} = \int_{\Lambda} \rho_{\Lambda,\lambda}(v)\,dv\,.$$
\end{fact}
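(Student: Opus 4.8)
The plan is to verify the identity directly from the series expansion~\eqref{eqPPpartition1} of $Z_\Lambda(\lambda)$, reading $\rho_{\Lambda,\lambda}$ as the one-point density $\rho_{\bl}$ for the activity function $\bl = \lambda\one_\Lambda$. First I would record that, since $\phi$ is repulsive, $e^{-H(x_1,\dots,x_k)}\le 1$, so the $k$-th term of~\eqref{eqPPpartition1} is bounded in absolute value by $(\lambda|\Lambda|)^k/k!$; hence the series converges locally uniformly in $\lambda$ (it defines an entire function bounded by $e^{\lambda|\Lambda|}$) and may be differentiated term by term. This gives
\[
Z_\Lambda'(\lambda) = \sum_{k\ge 1}\frac{\lambda^{k-1}}{(k-1)!}\int_{\Lambda^k} e^{-H(x_1,\dots,x_k)}\,dx_1\cdots dx_k,
\]
and therefore
\[
\lambda\,\frac{Z_\Lambda'(\lambda)}{Z_\Lambda(\lambda)} = \frac{1}{Z_\Lambda(\lambda)}\sum_{k\ge 1}\frac{\lambda^{k}}{(k-1)!}\int_{\Lambda^k} e^{-H(x_1,\dots,x_k)}\,dx_1\cdots dx_k.
\]

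Next I would identify the right-hand side of the Fact with the expected number of points of a sample from $\mu_{\Lambda,\lambda}$. By the defining property of the one-point density (equivalently, the GNZ/Mecke equation applied to the indicator $\one_\Lambda$, exactly as used to justify~\eqref{eq:density-def}), $\int_\Lambda \rho_{\Lambda,\lambda}(v)\,dv$ is the expected cardinality of $\bX \sim \mu_{\Lambda,\lambda}$. Expanding this expectation using~\eqref{eq:mu-def1} — the $k$-point part of $\mu_{\Lambda,\lambda}$ carries total weight $\frac{\lambda^k}{k!\,Z_\Lambda(\lambda)}\int_{\Lambda^k}e^{-H}$ and consists of configurations with exactly $k$ points — yields
\[
\int_\Lambda \rho_{\Lambda,\lambda}(v)\,dv = \frac{1}{Z_\Lambda(\lambda)}\sum_{k\ge 0}\frac{\lambda^k}{k!}\,k\int_{\Lambda^k} e^{-H(x_1,\dots,x_k)}\,dx_1\cdots dx_k = \frac{1}{Z_\Lambda(\lambda)}\sum_{k\ge 1}\frac{\lambda^k}{(k-1)!}\int_{\Lambda^k} e^{-H(x_1,\dots,x_k)}\,dx_1\cdots dx_k,
\]
which coincides with the previous display, completing the proof.

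There is no genuine obstacle here: the statement is simply the finite-volume identity $\lambda\,\partial_\lambda \log Z_\Lambda(\lambda) = \E_{\mu_{\Lambda,\lambda}}[\#\text{points}]$, and the only point deserving a word is the legitimacy of differentiating the power series term by term, which is immediate from $e^{-H}\le 1$ in the repulsive case. One could alternatively avoid even this by invoking the GNZ equation in both directions, but the hands-on computation above is shorter and self-contained.
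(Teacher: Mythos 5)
Your proof is correct, and it pins down the same underlying identity the paper uses: both sides equal $\E_{\mu_{\Lambda,\lambda}}[\#\text{points}]$. The paper, however, simply asserts this and cites the GNZ equations, whereas you supply the hands-on computation — term-by-term differentiation of the power series (justified by the repulsive bound $e^{-H}\le 1$) for the left-hand side, and a direct expansion of the expected cardinality under $\mu_{\Lambda,\lambda}$ for the right-hand side. Your version is more self-contained and makes the convergence/differentiability issue explicit, at the cost of a few extra lines; the paper's version is shorter but leans on a standard reference. Both are sound, and you correctly note the alternative of invoking GNZ in both directions, which is essentially what the paper does.
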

\begin{proof}
	The left-hand side is equal to the expected number of points of the Gibbs point process in $\Lambda$ at activity $\lambda$, which is equal to the right-hand side (see, for instance, the GNZ equations \cite{ruelle1999statistical}).
\end{proof}

From here, our interpolation statement  follows  from the fundamental theorem of calculus.

\begin{lemma}\label{lem:interpolate-to-zero}
	Let $\lambda > 0$.  Then $$\log Z_{\Lambda}(\lam) =  \int_0^1  \int_{\Lambda} \frac{\rho_{\Lambda, t \lambda}(v)}{t}\,dv \,dt\,.$$
\end{lemma}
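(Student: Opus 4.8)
The plan is to prove Lemma~\ref{lem:interpolate-to-zero} by integrating the logarithmic derivative identity of Fact~\ref{fact:expectation} from $0$ to $1$, after rescaling the activity. Write $f(t) = \log Z_{\Lambda}(t\lambda)$ for $t \in (0,1]$. By the chain rule and Fact~\ref{fact:expectation} applied at activity $t\lambda$, we have $f'(t) = \lambda \cdot \frac{Z_{\Lambda}'(t\lambda)}{Z_{\Lambda}(t\lambda)} = \frac{1}{t}\cdot \frac{(t\lambda) Z_{\Lambda}'(t\lambda)}{Z_{\Lambda}(t\lambda)} = \frac{1}{t}\int_{\Lambda} \rho_{\Lambda,t\lambda}(v)\,dv$. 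Since $Z_{\Lambda}$ is a finite power series in the activity (for $\bl$ of bounded support and bounded above, the series~\eqref{eqPPpartition1} converges absolutely and is entire in $\lambda$) with $Z_{\Lambda}(0) = 1$, the function $f$ is smooth on $(0,1]$ and extends continuously to $t = 0$.

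The only subtlety is the behavior of the integrand as $t \to 0^+$: a priori $\int_\Lambda \rho_{\Lambda,t\lambda}(v)\,dv / t$ could blow up. But $\rho_{\Lambda,t\lambda}(v) \le t\lambda$ pointwise by Lemma~\ref{lem:rho-lb} (or directly from~\eqref{eq:added-energy} since $H_v \ge 0$), so $\frac{1}{t}\int_\Lambda \rho_{\Lambda,t\lambda}(v)\,dv \le \lambda|\Lambda|$, uniformly in $t \in (0,1]$. Hence the integrand is bounded, and in fact $f'(t) \to \lambda|\Lambda| = f'(0^+)$ is finite (using $\rho_{\Lambda,t\lambda}(v) = t\lambda(1 + O(t))$ as $t \to 0$), so $f$ is actually $C^1$ on all of $[0,1]$. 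Therefore the fundamental theorem of calculus applies without any improper-integral concerns:
\begin{equation*}
\log Z_{\Lambda}(\lambda) = f(1) - f(0) = \int_0^1 f'(t)\,dt = \int_0^1 \int_{\Lambda} \frac{\rho_{\Lambda,t\lambda}(v)}{t}\,dv\,dt\,,
\end{equation*}
using $f(0) = \log Z_\Lambda(0) = \log 1 = 0$.

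I do not expect a genuine obstacle here — the lemma is essentially a one-line consequence of Fact~\ref{fact:expectation} and the fundamental theorem of calculus. The one point that needs a sentence of care is justifying that the $t \to 0$ singularity in the integrand $\rho_{\Lambda,t\lambda}(v)/t$ is removable, which follows immediately from the pointwise bound $\rho_{\Lambda,t\lambda}(v) \le t\lambda$. (One could alternatively write $\log Z_\Lambda(\lambda) = \lim_{\eps \to 0^+}\int_\eps^1 f'(t)\,dt$ and pass to the limit using this uniform bound and dominated convergence, but the bound already shows the integrand is bounded, so no limiting argument is strictly necessary.)
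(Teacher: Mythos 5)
Your proof is correct and follows the same route as the paper: apply Fact~\ref{fact:expectation} to identify $\frac{d}{dt}\log Z_\Lambda(t\lambda)$ with $\frac{1}{t}\int_\Lambda \rho_{\Lambda,t\lambda}(v)\,dv$, then integrate via the fundamental theorem of calculus. The extra care you take at $t=0$ — noting $\rho_{\Lambda,t\lambda}(v) \le t\lambda$ so the integrand is bounded — is a point the paper leaves implicit, and it is a worthwhile sentence to include.
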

\begin{proof}
	By the fundamental theorem of calculus and Fact \ref{fact:expectation} we have $$\log Z_{\Lambda}(\lambda) = \int_0^1 \frac{d}{dt} \log Z_{\Lambda}(t\lambda) \,dt = \int_0^1 \frac{\lambda Z_{\Lambda}'(t\lambda)}{Z_{\Lambda}(t\lambda)}\,dt = \int_0^1  \int_{\Lambda} \frac{\rho_{\Lambda, t \lambda}(v)}{t}\,dv \,dt\,.$$
\end{proof}

We note that by the identity \eqref{eq:added-energy}, we may write $\frac{\rho_{\Lambda,t\lambda}(v)}{t} = \lambda \E_{\Lambda,t\lambda} e^{-H_v(\bX)}$, and so this ratio is in fact deterministically bounded and given as $\lambda$ times an expectation of a random variable that is bounded by $1$.  In particular, strong spatial mixing will tell us that if we take a sequence of regions $\Lambda_n \uparrow \R^d$, then this quantity converges.  This provides us with an alternate form for the pressure.

\begin{lemma}\label{lem:pressure-second-identity}
	Let $\lambda > 0$ so that we have strong spatial mixing for all $\bl \leq \lambda$.  Then $$p(\lambda) = \int_0^1 \frac{\rho_{t\lambda}(0)}{t}\,dt\,.$$
\end{lemma}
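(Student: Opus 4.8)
The plan is to combine Lemma~\ref{lem:interpolate-to-zero} with Theorem~\ref{lem:pressure-dir} (or equivalently with the limit definition~\eqref{eqPressureDef}) by taking the region $\Lambda = \Lambda_n$ to infinity and carefully passing the limit through the double integral. Concretely, Lemma~\ref{lem:interpolate-to-zero} gives
\begin{equation*}
\frac{1}{|\Lambda_n|}\log Z_{\Lambda_n}(\lambda) = \int_0^1 \frac{1}{t}\left(\frac{1}{|\Lambda_n|}\int_{\Lambda_n} \rho_{\Lambda_n, t\lambda}(v)\,dv\right) dt\,,
\end{equation*}
and by~\eqref{eqPressureDef} the left-hand side converges to $p(\lambda)$. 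So it suffices to show that the right-hand side converges to $\int_0^1 \rho_{t\lambda}(0)/t\,dt$.

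The first step is to identify the pointwise limit of the inner average. For each fixed $t \in (0,1]$, the activity $t\lambda \le \lambda$, so strong spatial mixing applies to the family at activity bound $t\lambda$ (indeed at bound $\lambda$). As in the proof of Theorem~\ref{lem:pressure-dir}, I would introduce the inner region $\Lambda_n' = \{x \in \Lambda_n : \dist(x, \Lambda_n^c) \ge \log^2 n\}$; for $v \in \Lambda_n'$, strong spatial mixing (comparing the activity function $t\lambda\one_{\Lambda_n}$ to the constant function $t\lambda$, which agree except on the complement of $\Lambda_n$, at distance $\ge \log^2 n$ from $v$) gives $|\rho_{\Lambda_n,t\lambda}(v) - \rho_{t\lambda}(0)| \le \alpha|\Lambda_n| e^{-\beta \log^2 n} = e^{-\Omega(\log^2 n)}$, using translation invariance of the constant-activity infinite-volume density. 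Since $|\Lambda_n \setminus \Lambda_n'|/|\Lambda_n| = O_d(1/\log^2 n)$ and the integrand $\rho_{\Lambda_n,t\lambda}(v)/t = \lambda\,\E_{\Lambda_n,t\lambda}e^{-H_v(\bX)}$ is bounded by $\lambda$ uniformly in $v,t,n$ (by~\eqref{eq:added-energy}), the contribution of $\Lambda_n\setminus\Lambda_n'$ to the average vanishes. Hence for each fixed $t$, $\frac{1}{|\Lambda_n|}\int_{\Lambda_n}\rho_{\Lambda_n,t\lambda}(v)/t\,dv \to \rho_{t\lambda}(0)/t$.

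The second step is to upgrade this pointwise-in-$t$ convergence to convergence of the $t$-integral. Here I would invoke dominated convergence: the inner averages $\frac{1}{|\Lambda_n|}\int_{\Lambda_n}\rho_{\Lambda_n,t\lambda}(v)/t\,dv$ are all bounded by the constant $\lambda$, which is integrable on $[0,1]$, so $\int_0^1$ of the averages converges to $\int_0^1 \rho_{t\lambda}(0)/t\,dt$. This last integral is finite for the same reason (integrand bounded by $\lambda$). Combining the two steps with~\eqref{eqPressureDef} yields $p(\lambda) = \int_0^1 \rho_{t\lambda}(0)/t\,dt$.

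The main obstacle, such as it is, is making the uniformity precise: one needs the error bound in the approximation $\rho_{\Lambda_n,t\lambda}(v)\approx\rho_{t\lambda}(0)$ to be uniform in $t \in (0,1]$ so that dominated convergence applies cleanly, and one needs to confirm that the infinite-volume density $\rho_{t\lambda}(0)$ defined via~\eqref{eqInfVolDensity} is genuinely the limit of the finite-volume densities $\rho_{\Lambda_n,t\lambda}$ (not just of $\rho_{\bl_k}$ for balls $\bl_k$). Both follow from the strong spatial mixing bound in Definition~\ref{defSSM}, whose constants $\alpha,\beta$ depend only on the activity bound $\lambda$ and not on $t$: applying it at activity bound $\lambda$ works simultaneously for all $t\le 1$ since $t\lambda\le\lambda$, and it also shows the finite-volume densities along any exhausting sequence of regions converge to the same limit. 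So no real difficulty arises; the argument is essentially the one in the proof of Theorem~\ref{lem:pressure-dir} run underneath the $t$-integral.
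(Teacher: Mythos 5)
Your proposal is correct and follows essentially the same route as the paper: apply Lemma~\ref{lem:interpolate-to-zero}, restrict the spatial integral to an inner region $\Lambda_n'$ at distance $\log^2 n$ from the boundary, invoke strong spatial mixing (uniformly in $t$) to replace $\rho_{\Lambda_n,t\lambda}(v)$ by $\rho_{t\lambda}(0)$, and use the uniform bound $\rho_{\Lambda_n,t\lambda}(v)/t \le \lambda$ from~\eqref{eq:added-energy} to control the boundary layer and pass the limit through the $t$-integral. The only cosmetic quibble is that in your SSM bound the prefactor should be $\alpha|B_r|$ (the observable $e^{-H_v(\bX)}$ is local to $B_r(v)$, which is what you project to in Definition~\ref{defSSM}) rather than $\alpha|\Lambda_n|$, but since $|\Lambda_n|e^{-\beta\log^2 n}$ is still $e^{-\Omega(\log^2 n)}$ this does not affect the conclusion.
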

\begin{proof}
	By Lemma \ref{lem:interpolate-to-zero}, we may write $$\frac{1}{|\Lambda_n|} \log Z_{\Lambda_n}(\lambda) = \int_0^1  \frac{1}{|\Lambda_n|}\int_{\Lambda_n}\frac{\rho_{\Lambda_n, t \lambda}(v)}{t}\,dv \,dt\,.$$
	Setting $\Lambda_n' = \{v \in \Lambda_n : d(v,\partial \Lambda_n) \geq \log^2 n  \}$, we see that $|\Lambda_n'|/|\Lambda_n| = 1 + o(1)$.  If we write $$\rho_{\Lambda_n,t\lambda}(v) = t\lambda \E_{\Lambda_n,t\lambda} e^{-H_v(\bX)}\,,$$ then we see that strong spatial mixing implies that for all $v \in \Lambda_n'$ and $t \in [0,1]$ we have $$\left|\frac{\rho_{\Lambda_n,t\lambda}(v)}{t} -  \frac{\rho_{t\lambda}(v)}{t} \right| = O(e^{-\Omega(\log^2 n)})$$
	completing the proof.
\end{proof}

	\section{Surface pressure and finite-volume corrections}
	\label{secSurface}

	\subsection{Exponential convergence to the pressure on the torus}
	
	Here we prove Theorem~\ref{lem:sp-torus}.  Recall that $\TT_n^d =  \R^d / (n \Z)^d$ is the $d$-dimensional torus of sidelength $n$, and so in particular we have $| \TT_n^d| = n^d$.

	\begin{proof}[Proof of Theorem~\ref{lem:sp-torus}]
		By Lemma \ref{lem:interpolate-to-zero} and translation invariance of the torus we have $$\log Z_{\TT_n^d}(\lam) =  n^d\int_0^1  \frac{\rho_{\TT_n^d, t \lambda}(0)}{t}\,dt\,.$$  By strong spatial mixing, we have 
{\small \begin{align*}
			 \left|\frac{\rho_{\TT_n^d, t \lambda}(0)}{t} - \frac{\rho_{\R^d, t \lambda}(0)}{t}  \right| &\leq \left|\frac{\rho_{\TT_n^d, t \lambda}(0)}{t} - \frac{\rho_{[-n/4,n/4]^d, t \lambda}(0)}{t}  \right| + \left|\frac{\rho_{[-n/4,n/4]^d, t \lambda}(0)}{t} - \frac{\rho_{ t \lambda}(0)}{t}  \right| \\
		 &= O(e^{- \Omega(n) }  )\,.
		 \qedhere
		 \end{align*} } 
	\end{proof}

We may use Theorem~\ref{lem:sp-torus} to deduce a surprising identity that will be useful for simplifying a formula for the surface pressure along a cube.  For a unit vector $\vu \in \S^{d-1}$, activity $\lam$ and $t \geq 0$, define the activity function $\bl_{\vu,t}$ by $\bl_{\vu,t}(x) = \lam\one\{ \langle \vu,x \rangle \in [0,t]  \}$.  We recall that we have defined $\bl_{\vu}$ by $\bl_{\vu}(x) = \lam \one\{ \langle \vu,x\rangle \geq 0 \}$.

\begin{lemma}\label{lem:torus-alt}
	Suppose strong spatial mixing holds for all $\bl \leq \lambda$.  Then for any vector $\vu \in \S^{d-1}$ we have $$\int_{0}^\infty (\rho_{\bl_{\vu,t}}(0) - \rho_{\bl_{\vu}}(0))\,dt = 0\,.$$
\end{lemma}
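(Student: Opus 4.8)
The plan is to interpret $\int_0^\infty (\rho_{\bl_{\vu,t}}(0) - \rho_{\bl_{\vu}}(0))\,dt$ as the difference of two surface-pressure-like quantities computed along the same slab-geometry, and show both equal the same thing. Concretely, without loss of generality take $\vu = \ve_1$, so $\bl_{\ve_1,t}$ is the activity supported on the slab $\{x : x_1 \in [0,t]\}$ and $\bl_{\ve_1}$ is supported on the half-space $\{x_1 \ge 0\}$. By Corollary~\ref{cor:left-to-right} (interpolating left-to-right), $\log Z_{\Lambda_n}(\lambda) = \int_{\Lambda_n} \rho_{\bl_{x_1}}(x)\,dx$, where $\bl_t(x) = \lambda\one\{x_1 \ge t\}$. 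The key observation is that $\bl_t$ is a translate of the half-space activity $\bl_{\ve_1}$, while restricting the activity to a slab of width $w$ corresponds to $\bl_{\ve_1,w}$; and by strong spatial mixing $\rho_{\bl_{x_1}}(x)$ depends only on the signed distance from $x$ to the two faces $\{x_1 = -n\}$ and $\{x_1 = n\}$ of the box, up to exponentially small error.

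First I would set up the box $\Lambda_n = [-n,n]^d$ and split the integral $\int_{\Lambda_n}\rho_{\bl_{x_1}}(x)\,dx$ according to the first coordinate $x_1 \in [-n,n]$: for a point at signed distance $s = n - x_1$ from the right face and $s' = x_1 + n$ from the left face, strong spatial mixing (as in the proof of Theorem~\ref{lem:pressure-dir}) gives
\begin{equation*}
\rho_{\bl_{x_1}}(x) = \rho_{\bl_{\ve_1, 2n}}(s\ve_1 - n\ve_1 + \ldots)
\end{equation*}
— more cleanly: $\rho_{\bl_{x_1}}(x)$ equals, up to $O(e^{-\Omega(\min(s,s'))})$, the density at the origin for the slab activity $\bl_{\ve_1}$ shifted so the near face is at distance $s$. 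I would then write
\begin{align*}
\log Z_{\Lambda_n}(\lambda) &= (2n)^{d-1}\int_{-n}^{n} \rho_{\bl_{x_1}}\bigl((x_1,0,\dots,0)\bigr)\,dx_1 + O(|\partial\Lambda_n|\text{-corrections})\\
&= (2n)^{d-1}\Bigl(2n\,\rho_{t\lambda}(0)\big|_{t=1} + \text{boundary term}\Bigr).
\end{align*}
Comparing the coefficient of $(2n)^{d-1}$ in $\log Z_{\Lambda_n}(\lambda) - |\Lambda_n|p(\lambda)$ (using $p(\lambda) = \rho_{\bl_{\ve_1}}(0)$ from Theorem~\ref{lem:pressure-dir}) yields that the surface correction from one pair of faces equals $\int_0^\infty(\rho_{\bl_{\ve_1,t}}(0) - \rho_{\bl_{\ve_1}}(0))\,dt$ — but this same quantity must also, by Theorem~\ref{lem:sp-torus}, vanish, because the torus $\TT_n^d$ has \emph{no} faces and its finite-volume correction to $n^d p(\lambda)$ is only $O(e^{-\Omega(n)})$. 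The identity $\int_0^\infty(\rho_{\bl_{\vu,t}}(0) - \rho_{\bl_{\vu}}(0))\,dt = 0$ then follows by equating the slab computation (which on the torus, unrolled, would contribute a nonzero surface term unless the integral vanishes) with the exponentially-small torus bound.

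A cleaner route I would actually pursue: compute $\log Z_{\TT_n^d}(\lambda)$ two ways. On one hand, Theorem~\ref{lem:sp-torus} gives $\log Z_{\TT_n^d}(\lambda) = n^d p(\lambda) + O(e^{-\Omega(n)})$. On the other hand, I would apply an analogue of Corollary~\ref{cor:left-to-right} on the torus: interpolating the activity along the first coordinate from $0$ to $\lambda$ by sweeping a half-space boundary around the torus. Unrolling the torus in the first coordinate, the activity $\hat\bl_{x_1}$ seen at a point becomes the slab activity $\bl_{\ve_1, n}$ (width exactly the torus circumference $n$) centered appropriately, so $\log Z_{\TT_n^d}(\lambda) = n^{d}\,\frac{1}{n}\int_0^n \rho_{\bl_{\ve_1,n} \text{ shifted}}(0)\,dx_1$. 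As $n\to\infty$, strong spatial mixing lets me replace $\rho_{\bl_{\ve_1,n}}$ by $\rho_{\bl_{\ve_1}}$ plus a correction $\rho_{\bl_{\ve_1,n}}(0) - \rho_{\bl_{\ve_1}}(0)$ that I integrate; the telescoping/translation structure forces
\begin{equation*}
\log Z_{\TT_n^d}(\lambda) = n^d p(\lambda) + 2n^{d-1}\int_0^\infty(\rho_{\bl_{\ve_1,t}}(0) - \rho_{\bl_{\ve_1}}(0))\,dt + o(n^{d-1}).
\end{equation*}
Matching this with $n^d p(\lambda) + O(e^{-\Omega(n)})$ forces the integral to be $0$. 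The main obstacle will be making the unrolling argument on the torus rigorous — carefully tracking that the activity function induced by the left-to-right interpolation on $\TT_n^d$, when lifted to $\R^d$, is genuinely the width-$n$ slab activity (periodized), and controlling the edge effects where the sweeping boundary "wraps around," so that the $n^{d-1}$-order term is exactly the stated integral with no leftover contributions. Justifying the interchange of the limit $n\to\infty$ with the $t$-integral uses the uniform exponential decay of $\rho_{\bl_{\ve_1,t}}(0) - \rho_{\bl_{\ve_1}}(0)$ in $t$ guaranteed by strong spatial mixing (Definition~\ref{defSSM}), which is routine.
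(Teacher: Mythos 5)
Your ``cleaner route'' is exactly the paper's proof: apply Lemma~\ref{lem:log-Z} on the torus to write $\log Z_{\TT_n^d}(\lambda)$ as an integral of slab densities, use strong spatial mixing to identify the integrand with $\rho_{\bl_{\ve_1,\cdot}}(0)$, and compare the order-$n^{d-1}$ term with the $O(e^{-\Omega(n)})$ bound from Theorem~\ref{lem:sp-torus}. Note a minor bookkeeping slip: at a torus point at coordinate $t \in [0,n/2]$ the unrolled slab has width $n-2t$ (not $n$), and the change of variables $s = n-2t$ produces coefficient $n^{d-1}$ rather than your $2n^{d-1}$ --- harmless here since the integral vanishes, but worth getting right.
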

\begin{proof}
	Apply a rotation so that we may assume without loss of generality that $\vu = \ve_1$.
	The strategy will be to compute $\log Z_{\TT_n^d}(\lambda) - n^d p(\lambda)$ in two ways: first by Theorem~\ref{lem:sp-torus}, and second by using Lemma \ref{lem:log-Z}.  In particular, we may view $\TT_n^d = [-n/2,n/2)^{d}$ as a product space of circles and take the lexicographical ordering.  Then, if we apply Theorem~\ref{lem:sp-torus} we have $\log Z_{\TT_n^d}(\lambda) = \int_{\TT_n^d} \rho_{\bl_x}(x)\,dx$
	where we write $\bl_x$ for the activity $\bl_x(y) = \lambda\{ |y_1| \geq |x_1| \}$ with $y,x$ written as elements of $[-n/2,n/2)^d$.  By translation invariance and symmetry, we have  $$\int_{\TT_n^d} \rho_{\bl_x}(x) = 2n^{d-1} \int_0^{n/2}\rho_{\bl_{(t,0,\ldots,0)} }((t,0,\dots,0)) \,dt\,.$$
	
	By strong spatial mixing, we have $|\rho_{\bl_{(t,0,\ldots,0)} }((t,0,\dots,0)) - \rho_{\blambda_{\ve_1,n - 2t}}(0)| =O(e^{-\Omega(n)})$.  We thus have 
	\begin{equation}
		\log Z_{\TT_n^d}(\lambda) - n^d p(\lambda) = n^{d-1} \int_{0}^n (\rho_{\bl_{\ve_1,t}}(0) - \rho_{\bl_{\ve_1}}(0))\,dt + O(e^{-\Omega(n)})\,.
	\end{equation}
	Dividing by $n^{d-1}$, taking $n \to \infty$ and comparing to Theorem~\ref{lem:sp-torus} completes the proof.
\end{proof}

	\subsection{Surface pressure along the sphere}
	We will be able to compute the surface pressure along a sphere as well.  Before doing so, we require some basic preliminaries.
	
	First, we will need a basic continuity statement saying that if $\bl$ and $\bl'$ are activities that only disagree on a set of small volume then $\rho_{\bl} \approx \rho_{\bl'}$.  For this, we use a basic fact from \cite{michelen2020analyticity}:
	
	\begin{lemma}\label{lem:UC}
		Let $\bl$ and $\bl'$ be two activity functions bounded by $\lambda$ and set $\delta = \int |\bl(x) - \bl'(x)| \,dx$. Then $|Z(\bl) - Z(\bl')| \leq  e^{\lambda^2} \delta$.
	\end{lemma}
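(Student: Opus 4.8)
The plan is a direct term‑by‑term estimate of the Mayer‑type series~\eqref{eqPPpartition}. Let $\Lambda$ be any bounded measurable set containing $\supp(\bl)\cup\supp(\bl')$. Subtracting the two series, the constant terms cancel and
\[
Z(\bl)-Z(\bl') = \sum_{k\ge 1}\frac{1}{k!}\int_{(\R^d)^k}\Big(\textstyle\prod_{i=1}^k\bl(x_i)-\prod_{i=1}^k\bl'(x_i)\Big)\,e^{-H(x_1,\dots,x_k)}\,dx_1\cdots dx_k.
\]
To the integrand I would apply the telescoping identity $\prod_i a_i-\prod_i b_i=\sum_j\big(\prod_{i<j}a_i\big)(a_j-b_j)\big(\prod_{i>j}b_i\big)$, so that each of the $k$ resulting summands contains exactly one factor $\bl(x_j)-\bl'(x_j)$ and $k-1$ remaining factors that are activities bounded by $\lambda$ and supported in $\Lambda$.

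The key point is then that $\phi$ is repulsive, so $H\ge 0$ and $e^{-H}\le 1$; discarding it and using Tonelli (everything is nonnegative after taking absolute values), each summand factorizes over the coordinates and is at most $\delta\,(\lambda|\Lambda|)^{k-1}$, where $\delta=\int|\bl-\bl'|$ and we used $\int_{\R^d}\bl(x)\,dx\le\lambda|\Lambda|$ and likewise for $\bl'$. Summing the $k$ summands and then over $k$,
\[
|Z(\bl)-Z(\bl')|\;\le\;\sum_{k\ge 1}\frac{k}{k!}\,\delta\,(\lambda|\Lambda|)^{k-1}\;=\;\delta\sum_{k\ge 1}\frac{(\lambda|\Lambda|)^{k-1}}{(k-1)!}\;=\;e^{\lambda|\Lambda|}\,\delta,
\]
which is the asserted bound.

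I do not expect a genuine obstacle here, since this is an elementary continuity estimate; the only thing to be careful about is that all the series manipulations are licensed, and this is precisely where repulsiveness is used: $e^{-H}\le 1$ together with bounded support of the activities makes $Z(\lambda\one_{\Lambda})\le e^{\lambda|\Lambda|}<\infty$, so every series in sight converges absolutely. An equivalent route that fits the paper's framework more directly is to interpolate $\bl_s:=(1-s)\bl'+s\bl$, differentiate the series term by term to obtain $\frac{d}{ds}Z(\bl_s)=\int_{\R^d}(\bl-\bl')(x)\,Z(\bl_s\,e^{-\phi(x-\cdot)})\,dx$ (using $H(x,y_1,\dots,y_m)=\sum_i\phi(x-y_i)+H(y_1,\dots,y_m)$ and the symmetry of $H$), and then bound $Z(\bl_s e^{-\phi(x-\cdot)})\le Z(\lambda\one_{\Lambda})\le e^{\lambda|\Lambda|}$ by monotonicity of $Z$ in the activity before integrating over $s\in[0,1]$ against $|\bl-\bl'|$.
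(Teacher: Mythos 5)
Your derivation arrives at
\[
|Z(\bl)-Z(\bl')|\;\le\;e^{\lambda|\Lambda|}\,\delta ,
\]
where $|\Lambda|$ is the volume of a set containing $\supp(\bl)\cup\supp(\bl')$, and you then write ``which is the asserted bound.'' That is incorrect: the lemma asserts $e^{\lambda^2}\delta$, a constant that does not depend on $|\Lambda|$. Since $|\Lambda|$ can be arbitrarily large, your estimate is strictly weaker and genuinely different from the statement. The gap is not cosmetic, and it cannot be closed by the route you chose: the moment you discard $e^{-H}\le 1$ wholesale and bound each $\int\bl_i \le \lambda|\Lambda|$, you are computing the free Poisson estimate, and that must scale with volume (indeed $Z(\lambda\one_\Lambda)=e^{\lambda|\Lambda|}$ already when $\phi\equiv 0$). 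Your alternative interpolation route suffers the same problem, since you again bound $Z(\bl_s e^{-\phi(x-\cdot)})\le Z(\lambda\one_\Lambda)\le e^{\lambda|\Lambda|}$.

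To get a volume-independent constant you must use the repulsive interaction nontrivially, not merely as $e^{-H}\le 1$. Two standard mechanisms are: (i) keep the cross terms $e^{-\sum_{i\ne j}\phi(x_j-x_i)}$ and integrate coordinates against $1-e^{-\phi}$, whose total mass is $C_\phi<\infty$ (a tree-graph/Penrose-type argument), producing constants involving $\lambda C_\phi$ rather than $\lambda|\Lambda|$; or (ii) normalize by the partition function, e.g.\ along your interpolation $\bl_s$ one has $\tfrac{d}{ds}\log Z(\bl_s)=\int(\bl-\bl')(x)\,\tfrac{Z(\bl_s e^{-\phi(x-\cdot)})}{Z(\bl_s)}\,dx$ and $Z(\bl_s e^{-\phi(x-\cdot)})\le Z(\bl_s)$ by repulsiveness, giving $|\log Z(\bl)-\log Z(\bl')|\le\delta$ and hence a bound of the form $|Z(\bl)-Z(\bl')|\le (e^\delta-1)\min\{Z(\bl),Z(\bl')\}$. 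Note also that the paper itself supplies no proof of this lemma (it is cited from \cite{michelen2020analyticity}), so there is nothing in-paper to compare against; and the printed constant $e^{\lambda^2}$ looks dimensionally off and fails sanity checks (take $\phi\equiv 0$, $\bl=\lambda\one_\Lambda$, $\bl'=(\lambda-\eps)\one_\Lambda$ with $|\Lambda|$ large). It would be worth consulting the exact statement and hypotheses in the cited source before attempting a proof.
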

	
	From here, we obtain that $\rho_{\bl}$ is Lipschitz in $\bl$ for bounded $\bl$.  
	
	\begin{cor}\label{cor:compare}
		Let $\bl$ and $\bl'$ be two activity functions bounded by $\lambda$ and set $\delta = \int |\bl(x) - \bl'(x)| \,dx$.  If $v$ satisfies $\bl(v) = \bl'(v)$ then $$|\rho_{\bl}(v) - \rho_{\bl'}(v)| \leq 2 \lambda e^{\lambda^2 }\delta  \,.$$
	\end{cor}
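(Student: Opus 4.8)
The plan is to reduce everything to the ratio-of-partition-functions formula \eqref{eq:density-def} for $\rho_{\bl}$ and then invoke the Lipschitz bound on $Z$ from Lemma~\ref{lem:UC}. Recall that
\[
\rho_{\bl}(v) = \bl(v)\,\frac{Z(\bl e^{-\phi(v-\cdot)})}{Z(\bl)}\,,\qquad
\rho_{\bl'}(v) = \bl'(v)\,\frac{Z(\bl' e^{-\phi(v-\cdot)})}{Z(\bl')}\,,
\]
and since we are assuming $\bl(v) = \bl'(v) =: c$, the two densities differ only through the two ratios of partition functions. So the first step is to write
\[
\rho_{\bl}(v) - \rho_{\bl'}(v) = c\left(\frac{Z(\bl e^{-\phi(v-\cdot)})}{Z(\bl)} - \frac{Z(\bl' e^{-\phi(v-\cdot)})}{Z(\bl')}\right)\,,
\]
and note $c \le \lambda$.

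Next I would control the difference of the two ratios by a standard cross-multiplication trick:
\[
\frac{A}{B} - \frac{A'}{B'} = \frac{A - A'}{B} + A'\left(\frac{1}{B} - \frac{1}{B'}\right) = \frac{A-A'}{B} + \frac{A'}{B'}\cdot\frac{B' - B}{B}\,,
\]
with $A = Z(\bl e^{-\phi(v-\cdot)})$, $B = Z(\bl)$, $A' = Z(\bl' e^{-\phi(v-\cdot)})$, $B' = Z(\bl')$. Here I use three elementary facts: $Z(\cdot) \ge 1$ always (so the denominators $B, B'$ are at least $1$), the ratio $A'/B' = \rho_{\bl'}(v)/\bl'(v) \le 1$ by Lemma~\ref{lem:rho-lb} (or directly since $Z$ is monotone under pointwise decrease of activity and $\phi \ge 0$), and the two Lipschitz estimates $|A - A'| \le e^{\lambda^2}\int |\bl(x) e^{-\phi(v-x)} - \bl'(x) e^{-\phi(v-x)}|\,dx \le e^{\lambda^2}\delta$ and $|B - B'| \le e^{\lambda^2}\delta$, both from Lemma~\ref{lem:UC} (for the first, note multiplying both activities by the same factor $e^{-\phi(v-\cdot)} \le 1$ only decreases the $L^1$ distance, and both remain $\lambda$-bounded). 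Plugging in gives $\left|\frac{A}{B} - \frac{A'}{B'}\right| \le e^{\lambda^2}\delta + 1\cdot e^{\lambda^2}\delta = 2e^{\lambda^2}\delta$, and multiplying by $c \le \lambda$ yields the claimed bound $|\rho_{\bl}(v) - \rho_{\bl'}(v)| \le 2\lambda e^{\lambda^2}\delta$.

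There is no real obstacle here — the only points that need a word of care are (i) checking that $\bl e^{-\phi(v-\cdot)}$ and $\bl' e^{-\phi(v-\cdot)}$ are still $\lambda$-bounded and that their $L^1$ distance is at most $\delta$, which both follow immediately from $0 \le e^{-\phi(v-\cdot)} \le 1$ in the repulsive case; and (ii) justifying $A'/B' \le 1$, which is exactly the content of the upper bound in Lemma~\ref{lem:rho-lb} applied to $\bl'$ (dividing through by $\bl'(v)$, which is harmless if $\bl'(v) = 0$ since then both sides of the corollary are trivially controlled — in fact if $c = 0$ then $\rho_{\bl}(v) = \rho_{\bl'}(v) = 0$ and there is nothing to prove). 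So in the writeup I would first dispose of the $c = 0$ case in one line, then for $c > 0$ carry out the cross-multiplication and cite Lemmas~\ref{lem:UC} and~\ref{lem:rho-lb}.
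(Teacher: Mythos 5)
Your proof is correct and follows essentially the same route as the paper: the identical cross-multiplication decomposition of the difference of ratios, the bounds $Z \geq 1$ and $Z(\bl' e^{-\phi(v-\cdot)})/Z(\bl') \leq 1$, and two applications of Lemma~\ref{lem:UC}. The only additions are the explicit check that multiplying by $e^{-\phi(v-\cdot)} \leq 1$ preserves $\lambda$-boundedness and does not increase the $L^1$ distance, and the one-line disposal of the $\bl(v)=0$ case, both of which the paper leaves implicit.
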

	\begin{proof}
		Recall that $$\left|\rho_{\bl}(v) - \rho_{\bl'}(v)\right| = \bl(v) \left| \frac{Z(\bl e^{-\phi(v - \cdot)})}{Z(\bl)} -  \frac{Z(\bl' e^{-\phi(v - \cdot)})}{Z(\bl')}\right|\,.$$  Bounding the above difference by  $$\frac{|Z(\bl e^{-\phi(v - \cdot)}) - Z(\bl' e^{-\phi(v - \cdot)}) |}{Z(\bl)} + \frac{Z(\bl' e^{-\phi(v - \cdot)})}{Z(\bl')}\frac{|Z(\bl) - Z(\bl') | }{Z(\bl)} \leq 2 \delta e^{\lambda^2} $$
		using Lemma \ref{lem:UC} completes the proof.
	\end{proof}
	
	To apply Corollary~\ref{cor:compare}, we will need to compare volumes of portions of a spherical cap to a portion of a half-space (see Figure \ref{figure}). 	 We prove a basic volume bound that will be useful for this comparison.

	\begin{lemma}\label{lem:geometry}
		For $R \geq m >0$, let $B_R$ denote the ball in $\R^d$ of radius $R$ centered at $(R,0,\ldots,0)$ and define the cylinder  $S := \{ x \in \R^d : \| (0,x_2,\ldots,x_d) \| \leq m, x_1 \in[0,R] \}$.  Then there is a constant $C_d > 0$ so that $$| S \setminus B_R| \leq C_d m^{d+1} R^{-1} \,.$$
	\end{lemma}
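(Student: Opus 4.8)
The plan is to estimate the volume of $S \setminus B_R$ directly by slicing along the $x_1$-axis and bounding the "overhang" of the cylinder outside the ball at each height. For a fixed $x_1 = s \in [0,R]$, the slice of $B_R$ is the $(d-1)$-dimensional ball of radius $\sqrt{R^2 - (R-s)^2} = \sqrt{2Rs - s^2}$ centered at the origin in the remaining coordinates, while the slice of $S$ is the $(d-1)$-dimensional ball of radius $m$. Thus the slice of $S \setminus B_R$ is contained in the annulus between radii $\sqrt{2Rs - s^2}$ and $m$, which is nonempty only when $\sqrt{2Rs - s^2} < m$, i.e. roughly when $s < m^2/(2R)$ (using $R \geq m$, so $s$ is small and $2Rs - s^2 \geq Rs$).

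The key computation is then: for those small values of $s$, the $(d-1)$-volume of the slice of $S \setminus B_R$ is at most $\omega_{d-1} m^{d-1}$ (the full cross-sectional area of the cylinder), where $\omega_{d-1}$ is the volume of the unit $(d-1)$-ball. Integrating over $s \in [0, m^2/(2R)]$ — the range where the slice is nonempty — gives
\[
|S \setminus B_R| \leq \int_0^{m^2/(2R)} \omega_{d-1} m^{d-1} \, ds = \omega_{d-1} m^{d-1} \cdot \frac{m^2}{2R} = \frac{\omega_{d-1}}{2} \, m^{d+1} R^{-1}\,,
\]
so the lemma holds with $C_d = \omega_{d-1}/2$. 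I would double-check the endpoint of the $s$-integral: the slice is empty once $2Rs - s^2 \geq m^2$; since $s \leq R$ we have $s^2 \leq Rs$, hence $2Rs - s^2 \geq Rs$, so $Rs < m^2$ is a necessary condition for the slice to be nonempty, giving the bound $s < m^2/R$ and therefore $C_d = \omega_{d-1}$ works with room to spare.

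There is no real obstacle here; the only thing to be careful about is the geometry of which ball is which. The ball $B_R$ is centered at $(R,0,\ldots,0)$ and passes through the origin, so near the origin its boundary curves away from the $x_1$-axis like a paraboloid $x_1 \approx \|(x_2,\ldots,x_d)\|^2/(2R)$, which is exactly what makes the overhang region thin in the $x_1$-direction (length $O(m^2/R)$) while bounded by $m$ in the transverse directions. One should also note the statement is only interesting when $m \ll R$; for $m$ close to $R$ the bound $C_d m^{d+1} R^{-1} \asymp m^d$ is comparable to the full volume of $S$ and hence trivially true, so no separate argument is needed there. I would present the slicing argument cleanly using Fubini, record the transverse-radius comparison $\sqrt{2Rs-s^2} \geq \sqrt{Rs}$ for $s \in [0,R]$, and conclude.
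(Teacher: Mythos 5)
Your proof is correct, but it slices the region differently than the paper does. You foliate $S \setminus B_R$ by hyperplanes $\{x_1 = s\}$ perpendicular to the axis: the cross-section at height $s$ is the annulus $\{m \geq \|(x_2,\ldots,x_d)\| > \sqrt{2Rs - s^2}\}$, which is empty once $2Rs - s^2 \geq m^2$, and since $2Rs - s^2 = Rs + s(R-s) \geq Rs$ for $s \in [0,R]$, the slice is already empty for $s \geq m^2/R$; crudely bounding each nonempty slice by the full disk of radius $m$ gives $|S\setminus B_R| \leq \omega_{d-1} m^{d-1} \cdot m^2/R$. In effect you are observing that the overhang sits inside the slab $[0, m^2/R] \times B_m^{d-1}$, which is a clean bounding-box argument. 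The paper instead foliates by cylindrical shells at transverse radius $t \in [0,m]$: the shell at radius $t$ has $(d-2)$-dimensional surface measure $\propto t^{d-2}$ and extends over $x_1 \in [0, R - \sqrt{R^2 - t^2}]$, so $|S\setminus B_R| = C_d\int_0^m t^{d-2}(R - \sqrt{R^2-t^2})\,dt$, and the bound then comes from $1 - \sqrt{1-u} \leq u$. The two are the same Fubini computation in different coordinate systems (axial vs.\ radial); yours is slightly looser at each step but more elementary (no need to compute shell heights or invoke the square-root inequality), while the paper's yields an exact integral expression before estimating. Both land on the same $m^{d+1}/R$ scaling with constants of comparable size, so either is a fine proof.
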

	\begin{proof}
		We will compute this volume in spherical coordinates.  In particular, we will compute the volume by integrating the $d-1$ dimensional volume of the cylindrical shells $S_t := \{ x \in S : \|(0,x_2,\ldots,x_d)\| = t \}$.  Letting $C_d$ denote the $d - 2$-dimensional Lebesgue measure of the surface area of the unit $d-1$-dimensional ball, we note that $S_t$ has a base of circumference $C_d t^{d-2}$ and height $R - \sqrt{R^2 -t^2}$.  Thus we may bound \begin{align*}
			|S \setminus B_R| &= \int_{0}^m C_d t^{d-2} (R  -\sqrt{R^2 - t^2}) \,dt = C_d R \int_{0}^m  t^{d-2}(1 -  \sqrt{1 - (t/R)^2})\,dt \\
			&\leq C_d R^{-1} \int_0^m t^d \,dt = \frac{C_d m^{d+1}}{R(d+1)}
		\end{align*}
		where in the inequality we used the bound $1 - \sqrt{1 - x} \leq x$ for $x \in [0,1]$.
	\end{proof}

	\begin{figure}[!ht]
		\centering
		\includegraphics[width=2in]{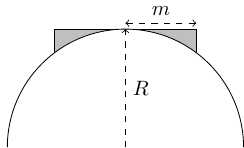}
		\caption{A volume calculation computed in Lemma \ref{lem:geometry}.}
		\label{figure}
	\end{figure}

	From here, we will find the surface pressure along a sphere.  
	\begin{prop}\label{pr:SP-sphere}
		Suppose strong spatial mixing holds for all $\bl \leq \lambda$ and let $B_n$ denote the ball of radius $n$ in $\R^d$.  Then 
	\begin{align*} \sp_B(\lambda) &:= \lim_{n\to\infty}\frac{\log Z_{B_n}(\lambda ) - |B_n| p(\lambda)}{\SA(B_n)} \\
		&= \frac{1}{|\S^{d-1}|}\int_{\vu \in \S^{d-1}} \int_{0}^\infty \int_0^1 \frac{\rho_{t\bl_{\vu}}(s\vu) - \rho_{t\lambda}(0)}{t} \,dt\,ds\,d\vu\,.
	\end{align*}
	\end{prop}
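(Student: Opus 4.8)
The plan is to mimic the proof of Theorem~\ref{lem:pressure-dir} (the cube case), replacing the left-to-right interpolation of Corollary~\ref{cor:left-to-right} with a radial interpolation adapted to the ball, and to combine this with Lemma~\ref{lem:interpolate-to-zero} to peel off the pressure term. First I would apply Lemma~\ref{lem:log-Z} with $q = 2$ (the Euclidean norm) to $B_n$: this gives $\log Z_{B_n}(\lambda) = \int_{B_n} \rho_{\hat{\bl}_x}(x)\,dx$ where $\hat{\bl}_x(s) = \lambda \one\{s \in B_n\}\one\{\|s\|_2 \geq \|x\|_2\}$, so the relevant activity at a point $x$ with $\|x\|_2 = n - u$ is $\lambda$ times the indicator of the spherical shell between radius $n-u$ and $n$. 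Passing to polar coordinates $x = (n-u)\vu$ with $\vu \in \S^{d-1}$ and $u \in [0,n]$, the volume element is $(n-u)^{d-1}\,du\,d\vu$, so
\[
\log Z_{B_n}(\lambda) = \int_{\S^{d-1}} \int_0^n (n-u)^{d-1}\, \rho_{\hat{\bl}_{(n-u)\vu}}\big((n-u)\vu\big)\,du\,d\vu\,.
\]

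Next I would restrict to a collar $u \in [0, \log^2 n]$ (equivalently $x$ within $\log^2 n$ of $\partial B_n$), since the complementary region contributes $O(|B_n|/\log^2 n) = o(\SA(B_n) \cdot \log^2 n)$ — actually one needs the contribution outside the collar to match $|B_n| p(\lambda)$ up to lower order, so the cleaner route is: use strong spatial mixing to replace $\rho_{\hat{\bl}_{(n-u)\vu}}((n-u)\vu)$ by $\rho_{\bl_{\vu}}(u\vu)$ for $u$ in a middle range $[\log^2 n, n - \log^2 n]$ where both the curved boundary at distance $\approx u$ and the far boundary at distance $\approx n-u$ are far, using that the local structure near $(n-u)\vu$ inside a half-space of curvature $\to 0$ agrees with $\bl_{\vu}$ (this is where Corollary~\ref{cor:compare} and the volume bound Lemma~\ref{lem:geometry} enter: the sphere of radius $n$ and the tangent half-plane at $(n-u)\vu$ differ, on a ball of radius $\log^2 n$ around that point, only on a set of volume $O((\log^2 n)^{d+1}/n) = o(1)$, so the densities differ by $o(1)$). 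Then I would compare $\int_0^\infty \rho_{\bl_{\vu}}(u\vu)\,du$ to the pressure: writing $\rho_{\bl_\vu}(u\vu) - \rho_\lambda(0) \to 0$ exponentially in $u$, the difference $\int_0^n (\rho_{\bl_\vu}(u\vu) - \rho_\lambda(0))\,du$ converges, and the polynomial weight $(n-u)^{d-1}$ gets handled by expanding $(n-u)^{d-1} = n^{d-1} - (d-1)n^{d-2}u + \cdots$; the $n^{d-1}$ term against $\rho_\lambda(0)$ reconstructs $|B_n|p(\lambda)$ via $\int_{\S^{d-1}}\int_0^n n^{d-1}\rho_\lambda(0)\,du\,d\vu$ — no, more carefully, $|B_n| = |\S^{d-1}|n^d/d$ and $\int_0^n (n-u)^{d-1}\,du = n^d/d$, so $\int_{\S^{d-1}}\int_0^n (n-u)^{d-1}\rho_\lambda(0)\,du\,d\vu = |B_n|p(\lambda)$ exactly once we use Lemma~\ref{lem:pressure-second-identity} to identify $\int_0^1 \rho_{t\lambda}(0)/t\,dt = p(\lambda)$. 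The leftover is $\int_{\S^{d-1}}\int_0^n (n-u)^{d-1}(\rho_{\bl_\vu}(u\vu) - \rho_\lambda(0))\,du\,d\vu$, and since the integrand is $o(1)$-supported near $u = O(1)$ with exponential tails, $(n-u)^{d-1} = n^{d-1}(1 + O(u/n))$ there, giving leading term $n^{d-1}\int_{\S^{d-1}}\int_0^\infty (\rho_{\bl_\vu}(u\vu) - \rho_\lambda(0))\,du\,d\vu$, i.e. $\SA(B_n)$ times the claimed constant after dividing by $|\S^{d-1}|$.

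Finally, to get the stated form with the extra $t$-integral I would not interpolate $\log Z$ radially against the full activity $\lambda$ but instead first apply Lemma~\ref{lem:interpolate-to-zero}: $\log Z_{B_n}(\lambda) = \int_0^1 \int_{B_n} \rho_{B_n, t\lambda}(v)/t\,dv\,dt$, then apply the radial version of Lemma~\ref{lem:log-Z} \emph{inside} the $t$-integral to each $\rho_{B_n,t\lambda}$... actually the cleaner bookkeeping is to run the whole argument above at activity $t\lambda$, obtaining $\log Z_{B_n}(t\lambda) - |B_n|p(t\lambda) = \SA(B_n)\cdot\frac{1}{|\S^{d-1}|}\int_{\S^{d-1}}\int_0^\infty (\rho_{t\bl_\vu}(s\vu) - \rho_{t\lambda}(0))\,ds\,d\vu + o(\SA(B_n))$, and then one wants the $\frac1t\,dt$ integral. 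But $\sp_B(\lambda)$ is defined directly from $\log Z_{B_n}(\lambda)$, not from an integral over $t$; the $t$-integral in the statement must come from the very construction, so the right move is to interpolate to zero \emph{first}: apply Lemma~\ref{lem:log-Z} with $q=2$ to $Z_{B_n}(\lambda)$ to write it as a radial integral of one-point densities at full activity, then separately apply Lemma~\ref{lem:interpolate-to-zero} to handle the $|B_n|p(\lambda)$ subtraction, so that $\log Z_{B_n}(\lambda) - |B_n|p(\lambda)$ becomes a single radial integral of $\rho_{\hat\bl_x}(x) - (\text{pressure density at } x)$, and only when we feed in $p(\lambda) = \int_0^1 \rho_{t\lambda}(0)/t\,dt$ and, symmetrically, rewrite $\rho_{\bl_\vu}(s\vu) = \int_0^1 \frac{d}{dt}[\text{something}]$... the honest path is: write $\rho_{\bl_\vu}(s\vu) - \rho_\lambda(0)$ using Lemma~\ref{lem:interpolate-to-zero}-style derivative identities applied to the \emph{difference} of two infinite-volume log-partition-function-like objects (the half-space and full-space free energies), which produces exactly $\int_0^1 \frac{\rho_{t\bl_\vu}(s\vu) - \rho_{t\lambda}(0)}{t}\,dt$. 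I expect the main obstacle to be precisely this last bookkeeping — rigorously justifying the interchange of the $s$-integral (which converges only because of strong spatial mixing), the $t$-integral, the $\vu$-integral, and the $n\to\infty$ limit, and checking that the collar/curvature error terms are genuinely $o(\SA(B_n))$ uniformly in $t \in [0,1]$ — rather than any single inequality; the geometric input (Lemma~\ref{lem:geometry}) and the correlation-decay input (Corollary~\ref{cor:compare}, strong spatial mixing) are already in hand, so the work is in assembling them with the right order of limits. I would present this, following the cube case, by first stating a lemma giving $\log Z_{B_n}(\lambda) = \int_{\S^{d-1}}\int_0^n (n-u)^{d-1}\rho_{\hat\bl_{(n-u)\vu}}((n-u)\vu)\,du\,d\vu$, then a lemma replacing the collar densities by half-space densities via Corollary~\ref{cor:compare} and Lemma~\ref{lem:geometry}, then combining with Lemma~\ref{lem:interpolate-to-zero} and Lemma~\ref{lem:pressure-second-identity} exactly as above.
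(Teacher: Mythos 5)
Your overall strategy (peel off a collar near $\partial B_n$, pass to spherical coordinates, use strong spatial mixing plus Corollary~\ref{cor:compare} and Lemma~\ref{lem:geometry} to flatten curvature, match to half-space densities) is the right skeleton, and you have correctly identified every auxiliary lemma the proof needs. But your starting decomposition is wrong, and this leads to a comparison that simply does not hold.

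The central error is the step ``use strong spatial mixing to replace $\rho_{\hat\bl_{(n-u)\vu}}((n-u)\vu)$ by $\rho_{\bl_\vu}(u\vu)$.'' Applying Lemma~\ref{lem:log-Z} with $q=2$ gives $\hat\bl_x(s) = \lambda\one\{n-u \le \|s\| \le n\}$ when $\|x\| = n-u$, so you are evaluating the density at a point \emph{on the inner boundary} of a shell of width $u$. After translating, this is the density at the \emph{edge} of a slab of width $u$ — in the paper's notation, $\rho_{\bl_{\vu,u}}(0)$ — and that is what the SSM/curvature estimates will compare it to. It is \emph{not} close to $\rho_{\bl_\vu}(u\vu)$, which is the half-space density evaluated at \emph{depth} $u$ inside the support; those are genuinely different quantities (for the shell, the point has the activity wall at distance $0$ on one side and $u$ on the other; for $\rho_{\bl_\vu}(u\vu)$ the wall is at distance $u$ on one side and there is nothing on the other). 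So your step $3$ produces a formula built out of slab-edge densities, which is not the formula being proved. This is not a cosmetic mismatch: the two expressions are not termwise equal.

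The second problem is the $t$-integral. You oscillate between Lemma~\ref{lem:log-Z} (which produces no $t$-integral) and Lemma~\ref{lem:interpolate-to-zero} (which does), and the final paragraph — ``interpolate to zero first: apply Lemma~\ref{lem:log-Z} with $q=2$\ldots then separately apply Lemma~\ref{lem:interpolate-to-zero}\ldots'' — does not assemble into a coherent argument, because you cannot apply both decompositions to the same $\log Z_{B_n}(\lambda)$ and then match the two sides term by term. Related to this, you write things like ``the $n^{d-1}$ term against $\rho_\lambda(0)$ reconstructs $|B_n|p(\lambda)$,'' but $\rho_\lambda(0)$ is the bulk one-point density, not the pressure; $p(\lambda) = \rho_{\bl_{\ve_1}}(0)$ (a half-space edge density) and $p(\lambda)=\int_0^1 \rho_{t\lambda}(0)/t\,dt$, and neither of these equals $\rho_\lambda(0)$.

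The paper's route avoids all of this by \emph{never touching Lemma~\ref{lem:log-Z}}. It starts directly from Lemma~\ref{lem:interpolate-to-zero} together with Lemma~\ref{lem:pressure-second-identity}, which give in one line
\begin{equation*}
\log Z_{B_n}(\lambda) - |B_n|\,p(\lambda) \;=\; \int_{B_n} \int_0^1 \frac{\rho_{B_n,t\lambda}(v) - \rho_{t\lambda}(0)}{t}\,dt\,dv\,,
\end{equation*}
so the $t$-integral and the pressure subtraction are present from the outset. SSM kills the interior, and on the collar one replaces $\rho_{B_n,t\lambda}((n-s)\vu)$ — a bulk density at depth $s$ in the ball — by $\rho_{t\bl_\vu}(s\vu)$ via the chain $B_n \to U$ (thick tube around the radial line) $\to U'$ (flat-faced cylinder) $\to$ half-space, using SSM, Corollary~\ref{cor:compare} plus Lemma~\ref{lem:geometry}, and SSM again. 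The comparison $\rho_{B_n,t\lambda}((n-s)\vu) \approx \rho_{t\bl_\vu}(s\vu)$ is geometrically correct precisely because both sides are densities of a point \emph{at depth $s$} behind a single wall. Your shell-edge densities cannot be compared to this without an extra (and unavailable) identity, so the $q=2$ route needs to be dropped, not patched.
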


	\begin{proof}
		Set $B_n' = \{x \in B_n : \dist(x,\partial B_n) \leq \log^2n  \}$ and write \begin{align}\log Z_{B_n}(\lambda) - & |B_n| p(\lambda) \nonumber \\
			&= \int_{B_n \setminus B_n'}  \int_0^1  \frac{\rho_{B_n, t \lambda}(v) - \rho_{t\lambda}(0)}{t} \,dt\,dv+ \int_{B_n'}  \int_0^1  \frac{\rho_{B_n, t \lambda}(v)- \rho_{t\lambda}(0)}{t} \,dt\,dv \label{eq:sphere-inner-outer}
		\end{align} note that by strong spatial mixing the former integral is $O(e^{-\Omega(\log^2 n)})$. 
	
		Change to spherical coordinates to rewrite the latter integral as 
		\begin{align} 
			\int_{B_n'}  &\int_0^1  \frac{\rho_{B_n, t \lambda}(v) - \rho_{t\lambda}(0)}{t} \,dt\,dv \nonumber \\
			&= \int_{\vu \in \S^{d-1}} \int_{0}^{\log^2 n}(n-s)^{d-1} \int_0^1  \frac{\rho_{B_n, t \lambda}((n-s)\vu) - \rho_{t\lambda}(0)}{t} \,dt\,ds\,d\vu\,. \label{eq:spherical-coords}
		\end{align}
	
		Fix a given $\vu$ and $s \in [0,\log^2 n]$.  Set $U$ to be collection of points in $B_n$ so that the distance to the line $L_{\vu} := \{ r \vu : r \in \R \}$ is at most $\log^2 n$.  By strong spatial mixing  we have \begin{equation}\label{eq:swap-to-neighborhood}
			\left|\frac{\rho_{B_n, t \lambda}((n-s)\vu) - \rho_{U,t\lambda}((n-s)\vu)}{t}\right| = O(e^{-\Omega(\log^2 n)})\,.
		\end{equation}
		
		For the same vector $\vu$, let $U'$ be the cylinder defined by  $$U' := \{ x \in \R^d : \langle x, \vu \rangle \in [n - \log^2 n,n], d(x,L_{\vu}) \leq \log^2 n  \}\,.$$  By Corollary \ref{cor:compare}, Lemma \ref{lem:geometry} and strong spatial mixing, for $s \in [0,\log^2 n]$ we have \begin{equation}\label{eq:swap-to-cylinder}
			\left|\frac{\rho_{U,t\lambda}((n-s)\vu) - \rho_{U',t\lambda}((n-s)\vu)}{t} \right| = O(\log^{2d}n / n)\,.\end{equation}  By strong spatial mixing and translational invariance, for $s \in [0,\log^2 n]$ we additionally have 
		\begin{equation}\label{eq:expand-cylinder}
			\left|\frac{\rho_{U',t\lambda}((n-s)\vu) - \rho_{t\blambda_{\vu}}(s\vu)}{t}\right| = O(e^{-\Omega(\log^2 n)})\,.
		\end{equation}
		
		Combining lines \eqref{eq:sphere-inner-outer},\eqref{eq:spherical-coords}, \eqref{eq:swap-to-neighborhood}, \eqref{eq:swap-to-cylinder}, \eqref{eq:expand-cylinder} shows  \begin{align*}
			\log Z_{B_n}(\lambda) -  |B_n| p(\lambda) &= \int_{\vu \in \S^{d-1}} \int_0^{\log^2 n} (n - s)^{d - 1} \int_0^1 \frac{\rho_{t\blambda_{\vu}}(s\vu) - \rho_{t\lambda}(0)}{t}\,dt\,ds\,d\vu \\
			&\qquad+ O(n^{d-2} \cdot\log^{2d + 2} n )\,.\end{align*}
		Dividing by $\SA(B_n) = n^{d-1} |\S^{d-1}|$ completes the proof.
	\end{proof}

	\subsection{Surface pressure along a convex polytope}
	
	By a similar approach to Proposition \ref{pr:SP-sphere}, we will be able to find the surface pressure along any convex polytope.  
	
	For our setting, consider a convex polytope $\cP \subset \R^d$ of positive volume and assume without loss of generality that $0 \in \cP$.  Let $\cF$ denote the collection of the faces of its boundary.  For each face $F \in \cF$ let $|F|$ denote its surface area, $\vn_F$ denote its outward pointing normal vector, and $r_F$ its distance to the origin.   Then we note that we may change coordinates so that for any integrable function $g : \cP\to \R$ we have  \begin{equation}\label{eq:polytope-int}
		\int_{\cP} g(x)\,dx = \sum_{F \in \cF} r_F \int_0^1 \int_{F} s^{d-1} g(sy)\,dy\,ds\,.
	\end{equation}

	An identity for the surface pressure will follow similarly to Proposition \ref{pr:SP-sphere}.  
	
	\begin{prop}\label{pr:polytope} Let $\cP \subset \R^d$ be a convex polytope of positive volume.  Then if strong spatial mixing holds for all $\bl \leq \lambda$ then we have  
		\begin{align*}\sp_{\cP}(\lambda) &:= \lim_{n \to \infty} \frac{\log Z_{n \cP}(\lambda) - n^d |\cP| p(\lambda) }{\SA(n \cP)}  \\
			&= \frac{1}{\SA(\cP)}\sum_{F \in \cF} |F| \int_0^\infty \int_{0}^1 \frac{\rho_{t \blambda_{{\vn}_F}}(s\vn_F) - \rho_{t\lambda}(0) }{t} \,dt \,ds\,. 
		\end{align*}
	\end{prop}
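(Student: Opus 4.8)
The plan is to follow the template of Proposition~\ref{pr:SP-sphere} very closely, replacing the sphere by the dilated polytope $n\cP$ and the spherical-coordinate decomposition by the face decomposition~\eqref{eq:polytope-int}. Start from Lemma~\ref{lem:interpolate-to-zero} and translation invariance of the pressure to write
\begin{equation*}
\log Z_{n\cP}(\lambda) - n^d |\cP| p(\lambda) = \int_{n\cP} \int_0^1 \frac{\rho_{n\cP, t\lambda}(v) - \rho_{t\lambda}(0)}{t}\,dt\,dv\,.
\end{equation*}
Let $(n\cP)' := \{x \in n\cP : \dist(x,\partial(n\cP)) \le \log^2 n\}$ be the thin shell near the boundary. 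By strong spatial mixing (and $|(n\cP)'| = O(n^{d-1}\log^2 n)$ while the integrand is $O(e^{-\Omega(\log^2 n)})$ away from the boundary), the contribution of $n\cP \setminus (n\cP)'$ is negligible, so the whole quantity equals the integral over $(n\cP)'$ up to $O(e^{-\Omega(\log^2 n)})$.

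Next, decompose the shell integral by faces. For each $F \in \cF$, the part of the shell near $F$ (at distance $\le \log^2 n$ from the affine hyperplane through $nF$, and within the relevant region) can be parametrized as $\{(n r_F - s)\,y : y \in F, s \in [0,\log^2 n]\}$ up to lower-order corrections coming from the edges and corners of the polytope, where several faces' shells overlap; this overlap region has volume $O(n^{d-2}\log^{4}n)$ (it is contained in a $\log^2 n$-neighborhood of the $(d-2)$-skeleton, which has $(d-2)$-dimensional measure $O(n^{d-2})$), and the integrand is $O(1)$ there, so it contributes $O(n^{d-2}\operatorname{polylog} n)$, which is $o(\SA(n\cP)) = o(n^{d-1})$. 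For a fixed face $F$, a fixed direction point $y \in F$, and a fixed $s$, we run the same three-step replacement chain as in~\eqref{eq:swap-to-neighborhood}--\eqref{eq:expand-cylinder}: (i) replace $n\cP$ by the slab/cylinder $U$ consisting of points in $n\cP$ within distance $\log^2 n$ of the normal line through the relevant boundary point, using strong spatial mixing, at cost $O(e^{-\Omega(\log^2 n)})$; (ii) replace $U$ by a genuine cylinder $U'$ of the form $\{x : \langle x, \vn_F\rangle \in [nr_F - \log^2 n, nr_F],\ \dist(x,\text{line}) \le \log^2 n\}$ using Corollary~\ref{cor:compare} together with the volume bound Lemma~\ref{lem:geometry} — here the curvature of the sphere is replaced by the fact that $\cP$ is \emph{flat} along each face, so in fact this step is \emph{easier}: for a polytope the boundary near the interior of a face coincides exactly with a hyperplane, and the only error comes from the face having finite extent (truncation at distance $\log^2 n$ from the edges), which again is an $O(n^{d-2}\operatorname{polylog} n)$ volume effect; (iii) replace $U'$ by the half-space activity $t\bl_{\vn_F}$ and shift to the origin via translation invariance and strong spatial mixing, at cost $O(e^{-\Omega(\log^2 n)})$, yielding the integrand $\frac{\rho_{t\bl_{\vn_F}}(s\vn_F) - \rho_{t\lambda}(0)}{t}$.

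Assembling the pieces and using~\eqref{eq:polytope-int} (with the Jacobian factor $r_F s^{d-1}$, and noting that over the shell $s$ ranges over $[0,\log^2 n]$ rescaled appropriately so that the $(n r_F)^{d-1}$ factor is absorbed and the $s$-integral extends to $\infty$ up to exponentially small error), we obtain
\begin{equation*}
\log Z_{n\cP}(\lambda) - n^d|\cP| p(\lambda) = n^{d-1}\sum_{F \in \cF} |F| \int_0^\infty \int_0^1 \frac{\rho_{t\bl_{\vn_F}}(s\vn_F) - \rho_{t\lambda}(0)}{t}\,dt\,ds + O(n^{d-2}\operatorname{polylog} n)\,,
\end{equation*}
using that $\SA(n\cP) = n^{d-1}\SA(\cP)$ and $\SA(\cP) = \sum_F |F|$; dividing by $\SA(n\cP)$ and sending $n \to \infty$ gives the claimed identity, and the convergence of the $s$-integral follows from strong spatial mixing as noted after Theorem~\ref{lem:surface-pressure}. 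The main obstacle — and the one point requiring genuine care rather than a copy of the sphere argument — is the bookkeeping near the $(d-2)$-skeleton of $n\cP$: one must check that the edges and corners, where the face-by-face parametrization breaks down and the normal direction is ambiguous, contribute only $O(n^{d-2}\operatorname{polylog} n)$ and hence vanish after dividing by $\SA(n\cP)$; this is a purely geometric estimate (the $\log^2 n$-neighborhood of a fixed $(d-2)$-dimensional polytopal skeleton, dilated by $n$, has volume $O(n^{d-2}\log^4 n)$) combined with the uniform bound $|\rho_{n\cP,t\lambda}(v)/t - \rho_{t\lambda}(0)/t| \le 2\lambda$ from~\eqref{eq:added-energy}. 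Everything else is strictly simpler than in Proposition~\ref{pr:SP-sphere} because each face is flat, so Lemma~\ref{lem:geometry} is only needed to control the truncation of a face to its interior, not genuine curvature.
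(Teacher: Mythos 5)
Your proof is correct and follows essentially the same route as the paper: decompose via \eqref{eq:polytope-int}, restrict to the thin boundary shell, discard the $O(n^{d-2}\operatorname{polylog}n)$ contribution from the $(d-2)$-skeleton, and replace the polytope activity by a half-space activity via strong spatial mixing before changing variables. The one place you are slightly more elaborate than necessary is the intermediate cylinder step (ii): as you yourself observe, flatness of the faces means that for $y$ in the interior of $F$ the activity $t\lambda\one_{n\cP}$ already agrees with a translated half-space activity near $sy$, so the paper simply applies strong spatial mixing once to pass directly from $\rho_{\cP_n,t\lambda}(sy)$ to $\rho_{t\bl_{\vn_F}}(r_F(n-s)\vn_F)$ without invoking Corollary~\ref{cor:compare} or Lemma~\ref{lem:geometry} at all.
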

	
Note that Theorem~\ref{lem:surface-pressure} follows immediately by taking $\cP$ to be a box.	
	\begin{proof}[Proof of Proposition~\ref{pr:polytope}]
		Set $\cP_n = n \cP$.  Apply strong spatial mixing and Lemma \ref{lem:interpolate-to-zero} in the coordinates given by \eqref{eq:polytope-int} to see 
		\begin{align}
			&\log Z_{\cP_n}(\lambda) - n^d |\cP| p(\lambda) \nonumber \\
			&= \sum_{F\in \cF} r_F \int_{n - \log^2n}^n s^{d-1} \int_{F} \int_{0}^1 \frac{\rho_{\cP_n,t\lambda}(sy) - \rho_{t\lambda}(0)}{t} \,dt\,dy\,ds + O(e^{-\Omega(\log^2 n)}) \label{eq:polytope-break-up}\,.
		\end{align}
	For a given $F$, set $F' = \{y \in F : d(y,\partial F) \geq \log^2 n / n \}$ where $\partial F$ is the $(d-2)$-dimensional polytope that gives the boundary of $F$.  Then we may replace the integral over each $F$ in the above with an integral over $F'$ and introduce a total error of size at most $O(n^{d-2} \log^2 n)$.  By strong spatial mixing, for each point $y \in F'$ and $s \in [n - \log^2n ,n]$ we have $$\left|\frac{\rho_{\cP_n,t\lambda}(sy)}{t} - \frac{\rho_{t \blambda_{{\vn}_F}}(r_F(n-s) \vn_F) }{t}\right| = O(e^{-\Omega(\log^2 n)})\,.$$		
	
	Combining with equation \eqref{eq:polytope-break-up} and changing coordinates $r_F(n - s) \mapsto s'$ completes the proof. 
	\end{proof}

	As an immediate corollary, we see that if $\phi$ is spherically symmetric then the surface pressure along a sphere is equal to the surface pressure along any convex polytope.  
	
	\begin{cor}
		Let $S$ be either a sphere or a convex polytope of positive volume.  If strong spatial mixing holds for all $\bl \leq \lambda$ then $$\sp_S(\lambda) = \lim_{n \to \infty} \frac{\log Z_{nS}(\lambda) -  |nS| p(\lambda)}{\SA(nS)} = \int_0^\infty \int_0^1 \frac{\rho_{t\blambda_{\ve_1}}(s\ve_1) - \rho_{t\lambda}(0)}{t}\,dt\,ds\,.$$
	\end{cor}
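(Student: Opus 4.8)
The plan is to deduce the corollary directly from Propositions~\ref{pr:SP-sphere} and~\ref{pr:polytope}, the only new input being the rotational symmetry of the model. First I would record the key symmetry fact: when $\phi$ is spherically symmetric, $H$ is invariant under every rotation $\mathcal{R}$ of $\R^d$, so for any activity function $\bl$ we have $Z(\bl \circ \mathcal{R}^{-1}) = Z(\bl)$ and, after the same change of variables in both numerator and denominator of~\eqref{eq:density-def}, $\rho_{\bl \circ \mathcal{R}^{-1}}(\mathcal{R}v) = \rho_{\bl}(v)$. Applying this with $\bl = t\bl_{\ve_1}$ and $\mathcal{R}$ any rotation taking $\ve_1$ to a given unit vector $\vu$, and noting that $\langle \mathcal{R}^{-1}x, \ve_1 \rangle = \langle x, \vu \rangle$ gives $\bl_{\ve_1} \circ \mathcal{R}^{-1} = \bl_{\vu}$, we obtain $\rho_{t\bl_{\vu}}(s\vu) = \rho_{t\bl_{\ve_1}}(s\ve_1)$ for every $\vu \in \S^{d-1}$ and every $s \geq 0$. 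Since $\bl_{\vu}$ has unbounded support, $\rho_{t\bl_{\vu}}$ is defined via the limit~\eqref{eqInfVolDensity} along the balls $B_n$; as each $B_n$ is itself rotation-invariant, the identity for the finite-volume densities $\rho_{t\bl_{\vu}\cdot\one_{B_n}}$ passes to the limit, so the equality holds for the infinite-volume densities that appear in the two propositions. (Equivalently: strong spatial mixing forces uniqueness of the infinite-volume Gibbs measure, which is then rotation-covariant.)

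With this identity in hand the corollary is essentially bookkeeping. Substituting $\rho_{t\bl_{\vu}}(s\vu) = \rho_{t\bl_{\ve_1}}(s\ve_1)$ into Proposition~\ref{pr:SP-sphere}, the triple integrand no longer depends on $\vu$, so $\frac{1}{|\S^{d-1}|}\int_{\S^{d-1}}(\cdots)\,d\vu$ collapses to one copy of the inner integral, yielding $\sp_B(\lambda) = \int_0^\infty\int_0^1 \frac{\rho_{t\bl_{\ve_1}}(s\ve_1) - \rho_{t\lambda}(0)}{t}\,dt\,ds$. Likewise, in Proposition~\ref{pr:polytope} each face $F$ contributes the same inner double integral (with $\vn_F$ replaced by $\ve_1$), so $\sp_{\cP}(\lambda) = \frac{1}{\SA(\cP)}\bigl(\sum_{F \in \cF}|F|\bigr)\int_0^\infty\int_0^1 \frac{\rho_{t\bl_{\ve_1}}(s\ve_1) - \rho_{t\lambda}(0)}{t}\,dt\,ds$, and since $\sum_{F \in \cF}|F| = \SA(\cP)$ this is the same expression. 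Finally, the scalings $\SA(nS) = n^{d-1}\SA(S)$ and $|nS| = n^d|S|$ reconcile the normalizations in the corollary's statement with those in the two propositions.

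The only genuine subtlety is the one flagged above: showing that the pointwise rotational invariance of the one-point densities survives the limiting procedure defining $\rho_{\bl}$ for activity functions of unbounded support. This is not difficult—it holds because the approximating regions $B_n$ in~\eqref{eqInfVolDensity} are rotationally symmetric and strong spatial mixing guarantees the limit exists independently of the (symmetric) approximation—but it is the one place where a word of care is needed; everything else is algebraic rearrangement of Propositions~\ref{pr:SP-sphere} and~\ref{pr:polytope}.
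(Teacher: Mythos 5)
Your proposal is correct, and it fills in exactly the argument the paper leaves implicit when it labels this an ``immediate corollary'' of Propositions~\ref{pr:SP-sphere} and~\ref{pr:polytope}. The key point is indeed the rotational covariance $\rho_{t\bl_{\vu}}(s\vu) = \rho_{t\bl_{\ve_1}}(s\ve_1)$ under the spherical-symmetry hypothesis on $\phi$ (which is the standing assumption in this part of the text, coming from the sentence immediately preceding the corollary), and your care about how this identity survives the limit~\eqref{eqInfVolDensity} for activity functions of unbounded support is well placed and correctly resolved via the rotation-invariance of the approximating balls $B_n$ (or, equivalently, uniqueness of the infinite-volume measure under strong spatial mixing). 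The collapse of the $\vu$-integral in Proposition~\ref{pr:SP-sphere} and of the sum over faces in Proposition~\ref{pr:polytope} (using $\sum_{F}|F| = \SA(\cP)$) is exactly as you describe.
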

	
	\subsection{Another surface pressure identity for the box}
	Proposition \ref{pr:polytope} provides an identity for the surface pressure of a box using Lemma \ref{lem:interpolate-to-zero}.  It is also possible to use the identity in Corollary \ref{cor:left-to-right}, which is what we do in this subsection.   Further, the resulting identity will be the simplest for algorithms and indeed provide another perspective on the surface pressure. 
	
	For two unit vectors $\vu,\vec{v} \in \S^{d-1}$ and $\lam$, define the activity $\bl_{\vu,\vec{v}}$ by   
	$$\bl_{\vu,\vec{v}}(x) = \bl(x) \one\{ \langle x,\vu \rangle \geq 0, \langle x, \vec{v} \rangle \geq 0 \}\,. $$	
	\begin{lemma}\label{lem:sp-box}
		Suppose strong spatial mixing holds for all $\bl \leq \lambda$.  Then 
		$$ \sp_{\Lambda}(\lambda) = \frac{1}{d}\sum_{j = 2}^d\int_{0}^{\infty}\left(\rho_{\bl_{\ve_1,\ve_j}}(t \ve_j) +  \rho_{\bl_{\ve_1,-\ve_j}}(-t \ve_j) - 2\rho_{\blambda_{\ve_1}}(0)   \right) \,dt\,.$$
In the case that $\phi$ is spherically symmetric, then the right-hand side may be written as 	
\begin{equation}\label{eq:surf-pressure-ss}
\left(1 - \frac{1}{d}\right)\int_{ 0}^{\infty} (\rho_{\bl_{\ve_1,\ve_2}}(t\ve_2) - \rho_{\bl_{\ve_1}}(0))\,dt \,.
\end{equation}	
	\end{lemma}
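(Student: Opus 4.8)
\textbf{Proof proposal for Lemma~\ref{lem:sp-box}.}

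The plan is to evaluate $\log Z_{\Lambda_n}(\lambda) - |\Lambda_n| p(\lambda)$ using the left-to-right identity of Corollary~\ref{cor:left-to-right} rather than the uniform interpolation used in Proposition~\ref{pr:polytope}. Recall $\Lambda_n = [-n,n]^d$ and that Corollary~\ref{cor:left-to-right} gives $\log Z_{\Lambda_n}(\lambda) = \int_{\Lambda_n}\rho_{\bl_{x_1}}(x)\,dx$, where $\bl_{t}(y) = \lambda\one\{y_1 \ge t\}$, and note that $\bl_{x_1}$ restricted to $\Lambda_n$ is $\lambda\one\{y \in \Lambda_n,\ y_1 \ge x_1\}$. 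First I would subtract $|\Lambda_n|p(\lambda)$ by writing $p(\lambda) = \rho_{\bl_{\ve_1}}(0)$ via Theorem~\ref{lem:pressure-dir} (after shifting so the interpolating hyperplane passes through $0$), so that the integrand becomes $\rho_{\bl_{x_1}}(x) - \rho_{\bl_{\ve_1}}(0)$. The point of the left-to-right interpolation is that in the bulk — points $x$ far from all faces of $\Lambda_n$ — strong spatial mixing already forces $\rho_{\bl_{x_1}}(x) \approx \rho_{\bl_{\ve_1}}(0)$ up to $e^{-\Omega(\log^2 n)}$, because the only relevant boundary (the slab boundary $\{y_1 = x_1\}$, viewed in the infinite-volume limit, is just a half-space activity). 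So the only contribution to the surface pressure comes from points $x$ near the \emph{side} faces $\{x_j = \pm n\}$ for $j \ge 2$ (the two faces $\{x_1 = \pm n\}$ perpendicular to the interpolation direction contribute a lower-order term and I'll need to check this, using that the interpolation sweeps past them).

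Next I would localize near one side face, say $\{x_j = n\}$ for fixed $j \in \{2,\dots,d\}$, setting $\Lambda_n' = \{x \in \Lambda_n : \dist(x,\partial\Lambda_n) \ge \log^2 n\}$ so the $\Lambda_n\setminus\Lambda_n'$ contribution, away from a $\log^2 n$-neighborhood of faces, is $O(|\Lambda_n| e^{-\Omega(\log^2 n)})$ by strong spatial mixing; but more carefully I want to decompose $\Lambda_n \setminus \Lambda_n'$ into face-neighborhoods and control the edge-overlaps (neighborhoods of the $(d-2)$-dimensional edges) as $O(n^{d-2}\log^{O(1)} n)$, which is $o(n^{d-1})$ and hence vanishes after dividing by $\SA(\Lambda_n) = d 2^d n^{d-1}$. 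Within the $\log^2 n$-neighborhood of a single side face $\{x_j = n\}$, I would parametrize $x = (x_1,\dots, n - s, \dots, x_d)$ with $s \in [0,\log^2 n]$ and the remaining coordinates ranging over a box of volume $\approx (2n)^{d-2}\cdot 2n$ in the $x_1$ direction; by translational invariance and strong spatial mixing (swapping the full box $\Lambda_n$ for the relevant half-space configuration, then moving the interpolation hyperplane $\{y_1 = x_1\}$ off to infinity when $x$ is in the bulk of the $x_1$-direction, and keeping it only when $x$ is near $\{x_1 = \pm n\}$ — but that's the lower-order edge term again), I get $\rho_{\bl_{x_1}}(x) - \rho_{\bl_{\ve_1}}(0) \approx \rho_{\bl_{\ve_1,\ve_j}}(s\ve_j) - \rho_{\bl_{\ve_1}}(0)$ where the two half-space conditions $\langle x,\ve_1\rangle \ge 0$ and $\langle x,\ve_j\rangle \ge 0$ encode the interpolation slab and the face respectively. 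Summing over the $2(d-1)$ side faces (both signs $\pm\ve_j$ for $j = 2,\dots,d$), and noting $\SA(\Lambda_n)$ carries the factor that turns the $n^{d-1}$ prefactor into $\frac{1}{d}\sum_{j=2}^d$ after accounting for the $2d$ faces total, gives the claimed identity: the integral $\int_0^{\log^2 n}$ extends to $\int_0^\infty$ because the integrand decays exponentially by strong spatial mixing, and convergence of that integral is justified exactly as in the remark following Theorem~\ref{lem:surface-pressure}.

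Finally, for the spherically symmetric case: when $\phi$ depends only on $\|x\|$, rotational symmetry of the infinite-volume densities gives $\rho_{\bl_{\ve_1,\ve_j}}(t\ve_j) = \rho_{\bl_{\ve_1,\ve_2}}(t\ve_2)$ for all $j \ge 2$ and $\rho_{\bl_{\ve_1,-\ve_j}}(-t\ve_j) = \rho_{\bl_{\ve_1,\ve_2}}(t\ve_2)$ as well — here I should be slightly careful that the pair $(\ve_1,-\ve_j)$ is related to $(\ve_1,\ve_j)$ by a reflection fixing $\ve_1$, which is a symmetry of the configuration since $\phi$ is even and spherically symmetric — and $\rho_{\bl_{\ve_1}}(0)$ is rotation-invariant too. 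So each of the $2(d-1)$ terms equals $\int_0^\infty(\rho_{\bl_{\ve_1,\ve_2}}(t\ve_2) - \rho_{\bl_{\ve_1}}(0))\,dt$, and $\frac{1}{d}\cdot 2(d-1) = 2(1 - 1/d)$; combined with the factor $\tfrac12$ implicit in pairing $\ve_j$ with $-\ve_j$ (or just directly counting), this collapses to $\left(1 - \frac1d\right)\int_0^\infty(\rho_{\bl_{\ve_1,\ve_2}}(t\ve_2) - \rho_{\bl_{\ve_1}}(0))\,dt$. The main obstacle I anticipate is the bookkeeping near the faces $\{x_1 = \pm n\}$ perpendicular to the interpolation direction and the $(d-2)$-dimensional edges: I need to argue these really are $o(n^{d-1})$ so they wash out, which amounts to a careful but routine volume estimate combined with the uniform boundedness of the integrand by $\lambda$ (from \eqref{eq:added-energy}) on the bad set and exponential decay off it — the genuinely new input over Proposition~\ref{pr:polytope} is just that the interpolation direction is distinguished, so the identity only "sees" the $d-1$ side directions, not all $d$.
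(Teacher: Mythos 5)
Your overall strategy is the same as the paper's: start from the left-to-right identity $\log Z_{\Lambda_n}(\lambda) = \int_{\Lambda_n}\rho_{\bl_{x_1}}(x)\,dx$ of Corollary~\ref{cor:left-to-right}, subtract $|\Lambda_n|p(\lambda) = \int_{\Lambda_n}\rho_{\bl_{\ve_1}}(0)\,dx$, kill the bulk via strong spatial mixing, and localize the remainder in $\log^2 n$-neighborhoods of the faces, identifying the limiting integrand near a side face $\{x_j = \pm n\}$ ($j\neq 1$) with $\rho_{\bl_{\ve_1,\pm\ve_j}}(\mp t\ve_j)$. The spherical-symmetry reduction at the end is also handled the same way.

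However, there is a genuine gap in your treatment of the two faces $\{x_1 = \pm n\}$ perpendicular to the interpolation direction. You flag them and then claim their contribution is a lower-order term that washes out by ``a careful but routine volume estimate combined with the uniform boundedness of the integrand.'' That is correct for $\{x_1 = -n\}$: there $\bl_{x_1}$ is essentially the half-space activity near $x$, so $\rho_{\bl_{x_1}}(x)-\rho_{\bl_{\ve_1}}(0)$ is exponentially small. But it fails for the face $\{x_1 = n\}$. For $x_1 = n - t$ with $t\in[0,\log^2 n]$, the activity $\bl_{x_1}$ restricted to $\Lambda_n$ is a \emph{thin slab} of width $t$ in the $\ve_1$-direction, so strong spatial mixing gives $\rho_{\bl_{x_1}}(x)\to\rho_{\bl_{\ve_1,t}}(0)$ where $\bl_{\ve_1,t} = \lambda\one\{\langle y,\ve_1\rangle\in[0,t]\}$, and this is \emph{not} close to $\rho_{\bl_{\ve_1}}(0)$ for $t = O(1)$ (indeed $\rho_{\bl_{\ve_1,t}}(0)\to\lambda$ as $t\to 0$). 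The face contribution is therefore $(2n)^{d-1}\int_0^\infty\bigl(\rho_{\bl_{\ve_1,t}}(0)-\rho_{\bl_{\ve_1}}(0)\bigr)\,dt + o(n^{d-1})$, which is $\Theta(n^{d-1})$ a priori — the same order as the side-face contributions you keep, not an edge term. Its vanishing is exactly the content of Lemma~\ref{lem:torus-alt} (proved by comparing $\log Z_{\TT_n^d}$ with $n^d p(\lambda)$ via Theorem~\ref{lem:sp-torus} on the one hand and via Lemma~\ref{lem:log-Z} applied on the torus on the other), which the paper explicitly calls ``a surprising identity.'' No volume estimate will give you this cancellation; you must invoke Lemma~\ref{lem:torus-alt} (or reprove its content) to discard the $\{x_1 = n\}$ face.

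Two minor points. First, you suggest the edge regions (neighborhoods of $(d-2)$-faces) need control; the paper simply excludes them from the $\Lambda_{\pm}^{(j)}$ by requiring $|x_i|\le n-\log^2 n$ for $i\neq j$, at a cost of $O(n^{d-2}\log^4 n) = o(n^{d-1})$ by boundedness — your sketch of this is fine. Second, your accounting of the prefactor in the spherically symmetric case invokes an unexplained ``factor $\tfrac12$ implicit in pairing $\ve_j$ with $-\ve_j$''; the cleaner statement is that each of the $d-1$ terms in $\sum_{j=2}^d$ already contains both face contributions, and spherical symmetry collapses each pair to $2\int_0^\infty(\rho_{\bl_{\ve_1,\ve_2}}(t\ve_2)-\rho_{\bl_{\ve_1}}(0))\,dt$, so the bookkeeping should be done once, carefully, rather than invoked twice loosely.
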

	\begin{proof}
		By Corollary \ref{cor:left-to-right}, we have $$\log Z_{\Lambda_n}(\lambda) - n p(\lambda) = \int_{[-n,n]^{d} } (\rho_{\bl_{x_1}}(x) - \rho_{\blambda_{\ve_1}}(0)) \,dx\,. $$
		Set $\Lambda^{\circ} = \{ x \in \Lambda_n : \|x\|_{\infty} \leq n - \log^2 n\}$ and note that by strong spatial mixing we have that $|\rho_{\bl_{x_1}}(x)- \rho_{\blambda_{\ve_1}}(0)| = O(e^{-\Omega(\log^2 n)})$ for all $x \in \Lambda^{\circ}$. 
		Thus if we set $\Lambda' = \Lambda_n \setminus \Lambda^{\circ}$ then we have \begin{equation}  \label{eq:sp-lp}
			\log Z_{\Lambda_n}(\lambda) - n p(\lambda) = \int_{\Lambda' } (\rho_{\bl_{x_1}}(x) - \rho_{\blambda_{\ve_1}}(0)) \,dx + O(e^{-\Omega(\log^2 n)})\,.
		\end{equation}
	
	For each $j \in [d]$, define the sets $\Lambda_+^{(j)}$ and $\Lambda_-^{(j)}$ via $$\Lambda_{\pm}^{(j)} = \{ x \in \Lambda_n :  |x_j - ( \pm n)| \leq \log^2 n \text{ and } |x_i| \leq n - \log^2 n \text{ for all } i \neq j\}$$
		and set $\Lambda^{(j)} = \Lambda_+^{(j)} \cup \Lambda_-^{(j)}$.  We then note that the $\Lambda^{(j)}$ are disjoint and $|\Lambda'| = \sum_j |\Lambda^{(j)}| + O_d(n^{-(d-2)}\log^4 n )$.  We may thus rewrite \eqref{eq:sp-lp} to see \begin{equation*}
				\log Z_{\Lambda_n}(\lambda) - n p(\lambda) = \sum_{j = 1}^d \left(\int_{\Lambda_+^{(j)} \cup \Lambda_-^{(j)} } (\rho_{\bl_{x_1}}(x) - \rho_{\blambda_{\ve_1}}(0)) \,dx   \right) + o(n^{d-1})\,.
		\end{equation*}
	
	For $j \neq 1$, integrate over all variables aside from $x_j$ and apply strong spatial mixing to see \begin{align*}
		&\int_{\Lambda_+^{(j)} \cup \Lambda_-^{(j)} }(\rho_{\bl_{x_1}}(x) - \rho_{\blambda_{\ve_1}}(0)) \,dx \\
		&\quad= (2n)^{d-1} \int_{0}^{\log^2 n}(\rho_{\bl_{\ve_1,\ve_j}}(t \ve_j) +  (\rho_{\bl_{\ve_1,-\ve_j}}(-t \ve_j) - 2\rho_{\blambda_{\ve_1}}(0))   ) \,dt + o(n^{d-1})\,.
	\end{align*}
We may replace the upper limit of $\log^2n$ by $\infty$ and introduce a subpolynomial error. This leaves only the $j = 1$ term.  By strong spatial mixing we have \begin{equation*}
		\int_{\Lambda_-^{(1)}} (\rho_{\bl_{x_1}}(x) - \rho_{\blambda_{\ve_1}}(0)) \,dx  = o(n^{d-1})\,.
\end{equation*}
	Finally, we have \begin{equation*}
		\int_{\Lambda_+^{(1)}} (\rho_{\bl_{x_1}}(x) - \rho_{\blambda_{\ve_1}}(0)) \,dx = (2n)^{d-1} \int_{0}^{\infty}(\rho_{\bl_{\ve_1,t}}(0) - \rho_{\bl_{\ve_1}}(0))\,dt + o(n^{d-1})\,.
	\end{equation*}
	By Lemma \ref{lem:torus-alt}, the integral on the right-hand side is $0$, completing the proof.
	\end{proof}

	\section{Algorithms for the pressure and surface pressure}
	\label{secAlgorithms}
In this section we provide approximation algorithms for the pressure and surface pressure, proving Theorem~\ref{thm:pressure-algs}.	 We will combine Theorems \ref{thmSSM} and \ref{thmFastMixing} with Theorem~\ref{lem:pressure-dir} to demonstrate a randomized algorithm for the pressure in the $\lambda < e/ \Delta_\phi$ regime.  We note that strong spatial mixing along with \eqref{eq:pressure-identity} and \eqref{eq:surf-pressure-identity} imply uniform upper bounds on $p(\lambda)$ and the absolute value of the surface pressure for $\lambda \in [0,\lambda_0]$ for each $\lambda_0 < e/\Delta_\phi$, and thus we will prove approximations up to additive errors rather than multiplicative errors.

	First, we will use Theorem~\ref{thmFastMixing} to deduce a method of approximating densities.  Recall that by \eqref{eq:added-energy}, for any $\bl$ we may write the density at a point $v$ by $$\rho_{\bl}(v) = \bl(v) \E_{\bl} e^{-H_v(\bX)}\,.$$  Monte-Carlo approximation for the expectation using block dynamics will allow us to approximate $\rho$.
	\begin{lemma}\label{lem:MC}
		Let  for all $\bl \leq \lambda$.  Then there are constants $C,L > 0$ so that the following holds. Suppose  $\bl \leq \lambda$ is supported in a box of volume $n$ and  $v \in \supp(\bl)$.  For $\eps > 0$, let $\hat{\rho}$ be the random variable given by running block dynamics for density $\bl$ with radius $L$ for time $T = C n \log(n/\eps)$ a total of $N$ independent times to obtain point processes $\bX_1,\ldots,\bX_N$, and defining $$\hat{\rho} = \frac{\bl(v)}{N} \sum_{j = 1}^N e^{-H_v(\bX_j)}\,.$$
		Then $|\E \hat{\rho} - \rho_{\bl}(v)| \leq \eps$ and $\Var(\hat{\rho}) \leq \lambda^2/N$.
	\end{lemma}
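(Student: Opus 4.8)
The plan is to establish the two assertions of the lemma separately: the variance bound is elementary and relies only on the boundedness of $e^{-H_v}$, while the bias bound $|\E\hat\rho - \rho_\bl(v)| \le \eps$ is the only place where fast mixing of the block dynamics is used.

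For the variance, the key observation is that $\phi$ is repulsive, so $H_v(\bX) \ge 0$ for every configuration, whence $e^{-H_v(\bX)} \in [0,1]$ pointwise. Thus each summand $Y_j := \bl(v)\, e^{-H_v(\bX_j)}$ lies in $[0,\bl(v)] \subseteq [0,\lambda]$, and since the $N$ runs of the chain are independent, the $Y_j$ are i.i.d.\ From $Y_1^2 \le \lambda Y_1 \le \lambda^2$ we get $\Var(Y_1) \le \E[Y_1^2] \le \lambda^2$, so $\Var(\hat\rho) = \Var(Y_1)/N \le \lambda^2/N$. This step uses neither strong spatial mixing nor the particular value of $T$.

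For the bias, I would let $\mu_T$ denote the law of the configuration after $T$ steps of the radius-$L$ block dynamics for $\mu_\bl$ on the box containing $\supp(\bl)$; the starting configuration is immaterial since $\tau_{\mathrm{mix}}$ is defined as a supremum over initial distributions. By \eqref{eq:added-energy}, $\rho_\bl(v) = \bl(v)\,\E_{\mu_\bl}[e^{-H_v(\bX)}]$ and $\E\hat\rho = \bl(v)\,\E_{\mu_T}[e^{-H_v(\bX)}]$; since $e^{-H_v}$ takes values in $[0,1]$, the difference of these two expectations is at most $\|\mu_T - \mu_\bl\|_{TV}$, so $|\E\hat\rho - \rho_\bl(v)| \le \lambda\,\|\mu_T - \mu_\bl\|_{TV}$. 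Next I would invoke Theorem~\ref{thmSSMtoMix}, which applies since strong spatial mixing holds for all activity functions bounded by $\lambda$: there is an $L_0 > 0$ so that for every $L \ge L_0$ the block dynamics has mixing time $\tau_{\mathrm{mix}}(\delta) = O(n \log(n/\delta))$, with $n$ the volume of the box. Taking $\delta = \eps/\lambda$ and $C$ large enough (in terms of $\lambda$, $\phi$, $d$, and the implied constant) guarantees $T = Cn\log(n/\eps) \ge \tau_{\mathrm{mix}}(\eps/\lambda)$, hence $\|\mu_T - \mu_\bl\|_{TV} \le \eps/\lambda$, and the bias is at most $\eps$.

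The only slightly delicate point, and the one I would be careful about, is essentially cosmetic: to present the running time in the prescribed form $Cn\log(n/\eps)$ one absorbs the fixed factor $\lambda$ (and the implied constant from Theorem~\ref{thmSSMtoMix}) into $C$, using that $\log(n\lambda/\eps) = O(\log(n/\eps))$ because $\lambda$ is a fixed constant. There is no substantive obstacle here; the whole argument reduces to the boundedness (and, if one prefers, locality) of $e^{-H_v}$ together with the fast-mixing guarantee already established via Theorems~\ref{thmSSM} and~\ref{thmSSMtoMix}.
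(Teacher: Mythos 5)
Your proposal is correct and takes essentially the same approach as the paper: the variance bound is the elementary observation that each summand lies in $[0,\lambda]$, and the bias bound follows by choosing $C$ large enough that $T \ge \tau_{\mathrm{mix}}(\eps/\lambda)$ and using that $e^{-H_v} \in [0,1]$ so the bias is at most $\lambda \cdot \|\mu_T - \mu_\bl\|_{TV}$. The only cosmetic difference is that you invoke Theorem~\ref{thmSSMtoMix} directly (natural, given the lemma's hypothesis is strong spatial mixing) while the paper cites Theorem~\ref{thmFastMixing}; these are interchangeable here.
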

	\begin{proof}
		Let $L$ be as guaranteed by Theorem \ref{thmFastMixing}, and choose $C$ large enough so that $\tau_{\mathrm{mix}}(\eps/\lambda) \leq T$.  Then since $\rho_{\bl}(v)$ is bounded by $\lambda$, we have that $|\E \hat{\rho}- \rho_{\bl}(v)| \leq \eps$, by definition of total variation.  Further, since $\hat{\rho} \leq \lambda$, we have $\Var(\hat{\rho}) \leq \lambda^2/ N$. 
	\end{proof}

	An algorithm for the pressure and proof of the first part of Theorem~\ref{thm:pressure-algs}  follows.  
	\begin{proof}[Proof of Theorem~\ref{thm:pressure-algs}, algorithm for the pressure]
		By Proposition \ref{lem:pressure-dir} we may write $p(\lambda) = \rho_{\bl_{\ve_1}}(0)$.  Set $s = C \log (1 / \eps)$ where $C$ will be large but fixed and define $S := [0,s] \times [-s,s]^{d-1}$ and $\bl$ by $\bl(x) = \lambda \{x \in S \}$.  For $C$ large enough, strong spatial mixing implies that $|\rho_{\bl_{\ve_1}}(0) - \rho_{\bl}(0)| \leq \eps/3$.    
		
		Applying Lemma \ref{lem:MC} shows that we may take $T = \Theta(\log^{d+1}(1/\eps) )$ and run block dynamics for time $T$ with radius $L$ a total of $m = \Theta(\eps^{-2})$ times in order to yield an approximation $\hat{\rho}$ with $|\E \hat{\rho} - \rho_{\bl}(0)| \leq \eps/3$ and $\Var(\hat{\rho}) \leq \eps^2/36$. By Chebyshev's inequality, we thus have $$\P(|\hat{\rho} - \E \hat{\rho}| \geq \eps/3) \leq \frac{1}{4}$$
		thus showing that this scheme provides the desired approximation.  The total runtime of this algorithm is $T\cdot m = \Theta(\eps^{-2}\log^{d+1}(1/\eps) )$.
	\end{proof}
	
	We now turn our attention to the surface pressure.  By Lemma \ref{lem:sp-box}, to find a randomized $\eps$-approximation algorithm to the surface pressure it is sufficient to find a randomized algorithm for $$\int_0^\infty (\rho_{\blambda_{\ve_1,\ve_2}}(t\ve_2) - \rho_{\blambda_{\ve_1}}(0))\,dt$$ that has error $\eps/(2(d-1))$ with probability at most $1/(8d)$.

	We first show that we can take a mesh to approximate integrals that appear in Lemma \ref{lem:sp-box} deterministically, from which it will be simpler to approximate the corresponding sum.

	\begin{lemma}\label{lem:int-to-sum}
		There are constants $C,c > 0$ so that for each $\eps > 0$ if we set $h = c \eps /\log^{d-1}(1/\eps), t_j = jh$ and $M = C \eps^{-1}\log^d(1/\eps)$ then we have $$\left|\int_0^\infty (\rho_{\bl_{\ve_1,\ve_2}}(t\ve_2) - \rho_{\bl_{\ve_1}}(0))\,dt - h\sum_{j = 1}^{M} (\rho_{\bl_{\ve_1,\ve_2}}(t_j\ve_2) - \rho_{\bl_{\ve_1}}(0)) \right|  \leq \eps\,.$$
	\end{lemma}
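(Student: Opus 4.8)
The plan is to split the error into two pieces: a \emph{tail error} from truncating the integral at $Mh$, and a \emph{discretization error} from replacing the integral on $[0,Mh]$ by a Riemann sum with mesh $h$. For the tail, I would invoke strong spatial mixing: by Definition~\ref{defSSM} applied to $\bl = t\bl_{\ve_1,\ve_2}$ and $\bl' = t\bl_{\ve_1}$ (which differ only on the half-space $\{\langle x,\ve_2\rangle < 0\}$, at distance $\ge t$ from the point $t\ve_2$), together with the lower bound $\rho \le \lambda$ from Lemma~\ref{lem:rho-lb}, we get $|\rho_{\bl_{\ve_1,\ve_2}}(t\ve_2) - \rho_{\bl_{\ve_1}}(0)| = O(e^{-\Omega(t)})$ uniformly; here one also uses translational invariance to recenter at the origin so that the comparison point in the definition is fixed. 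Hence $\int_{Mh}^\infty |\cdots|\,dt = O(e^{-\Omega(Mh)})$, and with $Mh = C\eps^{-1}\log^d(1/\eps) \cdot c\eps/\log^{d-1}(1/\eps) = Cc\log(1/\eps)$ this is $O(\eps^{Cc'})$, which is $\le \eps/2$ for $C$ large. The same bound controls the finitely many omitted Riemann-sum terms beyond $M$.

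For the discretization error on $[0,Mh]$, I would show the integrand $g(t) := \rho_{\bl_{\ve_1,\ve_2}}(t\ve_2) - \rho_{\bl_{\ve_1}}(0)$ is Lipschitz in $t$ with a constant depending only on $\lambda$ and $\phi$. This is the only place any real work is needed. To see it, note $t \mapsto t\bl_{\ve_1,\ve_2}$ moves the point $t\ve_2$ but keeps the activity function fixed as a set-indicator; shifting coordinates, this is equivalent to keeping the evaluation point at the origin and translating the activity region, which changes the region's indicator on a set of volume $O(|t-t'|)$ times a bounded factor (since the relevant boundary has bounded $(d-1)$-measure in the window that matters, using finite range of $\phi$ to localize). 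Then Corollary~\ref{cor:compare} gives $|g(t) - g(t')| \le 2\lambda e^{\lambda^2}\cdot O(|t-t'|)$. Actually one must be slightly careful because Corollary~\ref{cor:compare} requires the evaluation point $v$ to satisfy $\bl(v) = \bl'(v)$; combining a small translation with the exponential decay from strong spatial mixing (to discard the far-away part of the region where the indicator genuinely changes) handles this. Granting Lipschitzness with constant $K = K(\lambda,\phi)$, the standard Riemann-sum estimate gives discretization error $\le K M h^2 \le K \cdot C\eps^{-1}\log^d(1/\eps) \cdot c^2\eps^2/\log^{2d-2}(1/\eps) = O(\eps \cdot \log^{2-d}(1/\eps)/\log^{?}) $; choosing $c$ small (and noting for $d \ge 2$ the log powers only help) makes this $\le \eps/2$.

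Adding the two pieces gives total error $\le \eps$. The \textbf{main obstacle} is establishing the Lipschitz (or even just uniform continuity with a quantitative modulus) bound on $g$: one needs to argue that moving the evaluation point $t\ve_2$ along the axis is, after a translation, the same as perturbing the activity region on a set of controlled volume, and then apply Corollary~\ref{cor:compare}. The finite-range hypothesis on $\phi$ is what makes the relevant perturbation effectively local so that the swept volume is $O(|t-t'|)$ rather than growing with the (infinite) region; strong spatial mixing is then used once more to justify ignoring the part of the half-space far from the evaluation point. Everything else — the tail bound and the Riemann-sum arithmetic — is routine given the stated results.
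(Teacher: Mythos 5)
Your overall plan matches the paper's: split into a tail error (truncate the integral at $Mh = \Theta(\log(1/\eps))$ using exponential decay of $\rho_{\bl_{\ve_1,\ve_2}}(t\ve_2) - \rho_{\bl_{\ve_1}}(0)$ in $t$) and a discretization error (bound the Riemann-sum error via a modulus-of-continuity estimate on the integrand obtained from localization plus Corollary~\ref{cor:compare}). The tail argument is fine, and the recognition that the discretization estimate is the only nontrivial part is correct.

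The gap is in the claimed Lipschitz constant. You assert that $g(t) = \rho_{\bl_{\ve_1,\ve_2}}(t\ve_2) - \rho_{\bl_{\ve_1}}(0)$ is Lipschitz with a constant $K(\lambda,\phi)$ independent of $\eps$, with the justification that the finite range of $\phi$ ``makes the relevant perturbation effectively local so that the swept volume is $O(|t-t'|)$.'' Finite range does not localize the density: $\rho_{\bl}(v)$ depends on $\bl$ at arbitrary distance from $v$ (through chains of interactions), and the only localization tool available is strong spatial mixing / Corollary~\ref{cor:Vk-contract}, which lets you replace $\bl_{\ve_1,\ve_2}$ by its restriction $\bl^{(t)}$ to a ball of radius $R$ around $t\ve_2$ at the cost of an additive error $e^{-\Omega(R)}$. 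Taking $R = C_2\log(1/\eps)$ makes this error $O(\eps^2)$, but then the symmetric difference $\bl^{(t)} \triangle \bl^{(s)}$ after recentering is a slab of thickness $|s-t|$ inside a ball of radius $\Theta(\log(1/\eps))$, whose volume is $\Theta(|s-t|\,\log^{d-1}(1/\eps))$, not $O(|s-t|)$. Corollary~\ref{cor:compare} therefore yields the modulus $O(\eps^2 + \log^{d-1}(1/\eps)\,|s-t|)$ — this is precisely the paper's Claim~\ref{cl:lipschitz}, and the extra $\log^{d-1}(1/\eps)$ is exactly what the choice $h = c\eps/\log^{d-1}(1/\eps)$ is designed to absorb. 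Getting a genuinely $\eps$-independent Lipschitz constant would require a finer multiscale (dyadic shell) decomposition of the slab, balancing slab volume per shell against exponential decay of influence, which you do not supply. Repair your sketch by stating and using the weaker modulus $O(\eps^2 + \log^{d-1}(1/\eps)\,|s-t|)$, and the rest of your bookkeeping goes through.
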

	\begin{proof}
		By strong spatial mixing there is a constant $C_1$ so that \begin{equation}
			\left|\int_0^\infty (\rho_{\bl_{\ve_1,\ve_2}}(t\ve_2) - \rho_{\bl_{\ve_1}}(0))\,dt - \int_0^{C_1 \log(1/\eps)} (\rho_{\bl_{\ve_1,\ve_2}}(t\ve_2) - \rho_{\bl_{\ve_1}}(0))\,dt \right| \leq \frac{\eps}{2}\,.
		\end{equation}
		
		Turning our attention to the truncated integral, we will prove that the integrand is Lipschitz, which will allow approximation by a sum easily.  
		\begin{claim}\label{cl:lipschitz}
			For each $s,t \geq 0$ we have $$|\rho_{\bl_{\ve_1,\ve_2}}(t\ve_2) - \rho_{\bl_{\ve_1,\ve_2}}(s\ve_2)| =O(\eps^2 + \log^{d-1}(1/\eps)|s-t|)\,.$$
		\end{claim}	
		\begin{proof}
			Choose $C_2$ large enough so that if we set $\bl^{(t)}(x) := \bl_{\ve_1,\ve_2}(x) \one\{\|x - t\ve_2\| \leq C_2 \log(1/\eps)  \}$ when we have $\rho_{\bl_{\ve_1,\ve_2}}(t\ve_2) = \rho_{\bl^{(t)}}(t\ve_2) + O(\eps^2)$. Define $\bl^{(s)}$ similarly.  Applying Corollary \ref{cor:compare} completes the proof.
		\end{proof}
		
		Applying the claim and choosing $c$ small enough completes the proof.
	\end{proof}
	
	We now prove the second part of Theorem~\ref{thm:pressure-algs}, giving an algorithm for the surface pressure.
	
	\begin{proof}[Proof of Theorem~\ref{thm:pressure-algs}, algorithm for surface pressure]
		It is sufficient to find a randomized algorithm that with probability at least $1 - 1/(8d)$ is within $\eps/(2(d-1))$ of $\int_0^\infty (\rho_{\bl_{\ve_1,\ve_2}}(t\ve_2) - \rho_{\bl_{\ve_1}}(0))\,dt$.  With this in mind, define $f(t) = \rho_{\bl_{\ve_1,\ve_2}}(t\ve_2)$.  As in the proof of Claim \ref{cl:lipschitz}, for each density that we wish to approximate, zero out the activity function outside of the ball of radius $C_1 \log(1/\eps)$ centered at $t_j \ve_2$.  Apply Lemma \ref{lem:MC} and let $\hat{f}(t)$ denote the random variable given by running block dynamics in this ball for time $T = C_2\log^{d+1}(1/\eps)$ a total of $N$ times to approximate $f(t)$.  Then note that $$\mathrm{Var}\left(h \sum_{j = 1}^M \hat{f}(t_j)\right) \leq h^2 M \max_j \mathrm{Var}( \hat{f}(t_j)) =O\left(\eps \log^{2-d}(1/\eps) N^{-1}  \right)\,. $$
	    For each $c > 0$ we may take $C_2$ large enough so that $|f(t_j) - \E \hat{f}(t_j)| \leq c \eps / \log(1/\eps)$; in particular, we may choose $C_2$ large enough so that $$\left|\E h \sum_{j = 1}^M \hat{f}(t_j)  - h \sum_{j = 1}^M {f}(t_j)  \right| \leq \frac{\eps}{4(d-1)}\,.$$
	    
	    Further, if we take $N = C_3 \eps^{-1} \log^{2- d}(1/\eps)$, then we may assure $$\P\left(\left| h \sum_{j = 1}^M \hat{f}(t_j)  - h \sum_{j = 1}^M \E\hat{f}(t_j)  \right|  \geq \eps/2 \right)  \leq \frac{1}{8d}\,.$$
	    
	    This shows that the given scheme provides the desired random approximation algorithm.  It has total running time 
	    \[ (d-1)\cdot T\cdot N \cdot M = \Theta( \eps^{-2} \log^{d+3}(1/\eps)) \,. \qedhere \]
	\end{proof}

This completes the proof of Theorem~\ref{thm:pressure-algs}.

	\section*{Acknowledgments}
	MM supported in part by NSF grant DMS-2137623.  WP supported in part by NSF grant DMS-1847451.  We thank Marcus Pappik for helpful comments on the paper.
	
\newcommand{\etalchar}[1]{$^{#1}$}

\appendix

\section{Strong spatial mixing implies fast mixing}	
\label{secSSMtoBlock}

Theorem~\ref{thmFastMixing}, our result on fast mixing of the block dynamics  follows directly from two ingredients: strong spatial mixing established in Theorem~\ref{thmSSM} above, and the following theorem establishing that fast mixing of block dynamics is a consequence of strong spatial mixing.  

\begin{theorem}
\label{thmSSMtoMix}
If a Gibbs point process with a finite-range, repulsive potential $\phi$ exhibits strong spatial mixing on $\R^d$ for activities bounded by $\lam$, then there exists $L_0 >0$ so that for all $L \ge L_0$ the block dynamics with update radius $L$ for $\mu_{\Lam_n, \bl}$ has mixing time $\tau_{\mathrm{mix}}(\eps) = O(N \log (N/\eps))$ when $\bl $ is bounded by $\lam$, where $N = |\Lam_n|$. 
\end{theorem}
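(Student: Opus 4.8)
The plan is to derive Theorem~\ref{thmSSMtoMix} from strong spatial mixing by a path coupling argument in the spirit of Dyer--Greenhill~\cite{dyer2004mixing} and its point-process adaptation in~\cite{helmuth2020correlation}. Equip the finite point configurations in $\Lam_n$ with the path metric for which $X$ and $X'$ are adjacent (at distance $1$) precisely when $X' = X \cup \{u\}$ for a single point $u$; the induced metric is $d(X,X') = |X \triangle X'|$. By the path coupling lemma it suffices to exhibit, for adjacent configurations $X$ and $X' = X \cup \{u\}$, a coupling of one step of the block dynamics with update radius $L$ under which $\E\,d(X_{t+1},X'_{t+1}) \le 1 - \delta/N$ for some $\delta = \delta(\phi,\lam,L) > 0$ and $N = |\Lam_n|$. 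Given such a one-step contraction, the reversibility of the block dynamics with respect to $\mu_{\Lam_n,\bl}$, together with a standard truncation of the configuration space to the $O(N)$-point configurations (which carry all but an $e^{-\Omega(N)}$ fraction of the stationary mass, and which the chain has entered with high probability after $O(N\log N)$ steps by a covering argument for the update balls), gives $\tau_{\mathrm{mix}}(\eps) = O(N\log(N/\eps))$, exactly as in~\cite{dyer2004mixing,helmuth2020correlation}.

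The coupling picks the update location $y \in \Lam_n$ to be the same in both chains and then couples the two resamplings of $B_L(y)$ according to the position of $u$ relative to $B_L(y)$. If $u \in B_L(y)$, which occurs with probability $\Theta(L^d/N)$ since a radius-$L$ ball about any point of $\Lam_n$ meets $\Lam_n$ in volume $\Theta(L^d)$, then $X \setminus B_L(y) = X' \setminus B_L(y)$, so the two boundary conditions, and hence the two resampling measures on $B_L(y)$, coincide; resampling identically gives $X_{t+1} = X'_{t+1}$ and $d$ drops to $0$. If $\dist(u, B_L(y)) \ge r$, then because $\phi$ is repulsive of range $r$ the point $u$ does not interact with $B_L(y)$, so again the induced activity functions on $B_L(y)$ coincide; resampling identically leaves $X_{t+1}\triangle X'_{t+1} = \{u\}$, and $d$ stays $1$. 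The remaining event $0 < \dist(u,B_L(y)) < r$ has probability $O(L^{d-1}/N)$ and is the only one that can increase $d$.

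On that last event the resampling of $B_L(y)$ in the two chains is governed by activity functions $\bl_X$ and $\bl_{X'} = \bl_X\,e^{-\phi(\,\cdot\,-u)}$ on $B_L(y)$, both bounded by $\lam$ (here repulsiveness is used: multiplying by boundary factors $e^{-\phi(\cdot-z)} \le 1$ cannot increase the activity), which differ only on $\overline{B_r(u)}\cap B_L(y)$. Applying strong spatial mixing (Theorem~\ref{thmSSM}) to $\Lam' := B_L(y)\setminus B_s(u)$ gives $\|\mu_{\bl_X} - \mu_{\bl_{X'}}\|_{\Lam'} \le \alpha\,\vol(B_L)\,e^{-\beta(s-r)}$. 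Taking the maximal coupling of the projections to $\Lam'$ with $s = s^\ast := r + \tfrac{1}{\beta}\log(\alpha\,\vol(B_L)^4) = O(\log L)$, so that the two resamplings $Y,Y'$ agree on $\Lam'$ except with probability $\vol(B_L)^{-3}$, and using that $Y$ and $Y'$ are each Poisson($\lam$)-dominated (so $\E|Y\cup Y'|^2 = O(\vol(B_L)^2)$) together with Cauchy--Schwarz, one obtains
\[
\E\,|Y\triangle Y'| \;\le\; 2\lam\,\vol(B_{s^\ast}) + \sqrt{\E|Y\cup Y'|^2}\cdot \vol(B_L)^{-3/2} \;=\; O\big((\log L)^d\big),
\]
so on this event $X_{t+1}\triangle X'_{t+1} = \{u\}\cup(Y\triangle Y')$ has expected size $1 + O((\log L)^d)$. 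Combining the three cases,
\[
\E\,d(X_{t+1},X'_{t+1}) \;\le\; 1 - \Theta\!\left(\tfrac{L^d}{N}\right) + O\!\left(\tfrac{L^{d-1}}{N}\right)\!\cdot O\big((\log L)^d\big),
\]
and since $L^d$ dominates $L^{d-1}(\log L)^d$, there is $L_0 = L_0(\phi,\lam)$ such that for every $L \ge L_0$ the right-hand side is at most $1 - \delta/N$ with $\delta = \delta(\phi,\lam,L) > 0$.

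The step I expect to be the crux is the third case: the naive bound $\E|Y\triangle Y'| \le 2\lam\,\vol(B_L) = \Theta(L^d)$ is of the same order as the $\Theta(L^d/N)$ gain from the first case and would destroy the contraction, so strong spatial mixing is genuinely needed to confine the newly created disagreement to a ball of radius $O(\log L)$ about $u$ with overwhelming probability. Care is required to do this with a single coupling of $(Y,Y')$ rather than a separate one at each scale, and to control the rare event that the disagreement escapes this ball — the Cauchy--Schwarz bound above, exploiting Poisson domination of the resamplings, is the route I would take. The remaining ingredients — reversibility of the block dynamics, the truncation handling the unbounded configuration space, and the passage from one-step contraction to the mixing-time bound — are routine and follow~\cite{dyer2004mixing,helmuth2020correlation}.
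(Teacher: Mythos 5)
Your one-step coupling is structured correctly and uses strong spatial mixing in essentially the same way the paper does: you match update balls, dispose of the coalescing case $u \in B_L(y)$ and the non-interacting case $\dist(u, B_L(y)) \ge r$, and in the delicate intermediate annulus you invoke strong spatial mixing to confine the newly created disagreement to a ball of logarithmic radius around $u$, with Poisson domination and a Cauchy--Schwarz bound controlling the rare event that it escapes. That calculation is sound and gives a genuine contraction once $L^d \gg L^{d-1}(\log L)^d$.

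The gap is in the choice of metric, and it is exactly the point the paper flags as the one adaptation it needs beyond \cite{helmuth2020correlation}. You take $d(X,X') = |X\triangle X'|$, the path metric generated by single-point insertions. For a general (non-hard-core) repulsive potential, configurations in $\Lam_n$ may have arbitrarily many points, so $\diam(G_\Omega) = \infty$ and the path coupling lemma the paper uses (Lemma~\ref{lemPathCouple}, whose bound carries a $\log(\diam(G_\Omega))$ factor and whose hypothesis is $\diam(G_\Omega) < \infty$) simply does not apply. Your remedy---a "standard truncation'' to $O(N)$-point configurations with a burn-in argument---does not commute with the dynamics: a single block update can create many points, so the restricted chain is not the block dynamics, and repairing this requires either a Lyapunov/drift augmentation of path coupling or a careful two-stage burn-in, neither of which is a mere citation. (In the hard-sphere setting of \cite{helmuth2020correlation} your metric automatically has diameter $O(N)$ and no truncation is needed; that is exactly why the paper says the metric had to change.) The paper's fix is to declare $X,Y$ adjacent precisely when $X\triangle Y$ fits inside one radius-$r$ ball, with pre-metric $\equiv 1$, so that $D(X,Y)$ counts the number of radius-$r$ balls needed to cover $X\triangle Y$; this is at most $C_d N$ for \emph{any} pair of configurations, giving bounded diameter for free, and the covering-number bound $O(t^d)$ plays the role your $\E|Y\triangle Y'| = O((\log L)^d)$ estimate plays. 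If you swap in this metric your contraction argument carries over essentially unchanged (the paper's choice $t=\gamma K^{1/d}$ is more conservative than your $s^\ast = O(\log L)$, but both suffice) and the diameter problem disappears.
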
	


The proof of this theorem closely follows that of~\cite[Theorem 8]{helmuth2020correlation} which in turn is adapted from~\cite[Theorem 2.5]{dyer2004mixing}.  The main change is the choice of a different metric that can handle the fact that the number of points appearing in a bounded region is unbounded if the potential does not have a hard core.

We  use the path coupling method of Bubley and Dyer~\cite{bubley1997path}.   Consider a discrete time Markov chain on a state space $\Omega$.  We place a graph structure  $G_{\Omega}$ on $\Omega$ by declaring some pairs of configurations to be adjacent and define a \textit{pre-metric} $\hat D$ on pairs of adjacent configurations.  The pre-metric should be symmetric and bounded below by $1$. We then extend this pre-metric to a \textit{path metric} $D$ on all pairs of configurations by taking the shortest path distance (with respect to $\hat D$) on the graph $G_{\Omega}$.  We require that $D(X,Y) = \hat D(X,Y)$  for adjacent configurations $X,Y$ (in our case this will follow directly from our definition of $\hat D$).   The \textit{diameter} of $G_{\Omega}$ with respect to $D$ is $\text{diam}(G_\Omega) = \sup_{X,Y \in \Omega} D(X,Y)$.   We use the following version of the path coupling technique.

\begin{lemma}[{\cite[Corollary 14.7]{levin2017markov}}]
  \label{lemPathCouple}
  Consider a discrete time Markov chain with finite or infinite state space $\Omega$.  Define a graph structure $G_{\Omega}$ on $\Omega$ along with a corresponding pre-metric $\hat D$ and path metric $D$.  Suppose that $\mathrm{diam}(G_\Omega)  < \infty$. 

  Suppose that for each pair $\{X_0, Y_0\}$ of adjacent configurations 
  the following holds: there exists a coupling $(X_1, Y_1)$ of the distributions of the one-step distributions of the Markov chain with initial configuration $X_0$ and $Y_0$ respectively such that
  \begin{equation*}
    \E [D(X_1, Y_1)]\leq  D(X_0, Y_0)e^{-\xi} =  \hat D(X_0, Y_0)e^{-\xi} \,.
  \end{equation*}
   Then
  \begin{equation*}
    t_{\mathrm{mix}}(\varepsilon) \leq \left \lceil \frac{ \log(\mathrm{diam}(G_\Omega)) + \log(1/\varepsilon )}{\xi} \right
    \rceil.
  \end{equation*}
\end{lemma}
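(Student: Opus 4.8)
The plan is to establish the classical path-coupling argument in the form of \cite[Corollary 14.7]{levin2017markov} (see also \cite{bubley1997path}): use the one-step coupling guaranteed on adjacent pairs to build a one-step coupling on \emph{every} pair of configurations, iterate it to obtain a geometrically contracting $t$-step coupling, and then conclude with the coupling inequality and Markov's inequality. Throughout, the assumption $\mathrm{diam}(G_\Omega) < \infty$ is used to keep $D$ a bounded function, so all expectations below are automatically finite.

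First I would pass from adjacent pairs to arbitrary pairs by gluing couplings along (near-)geodesics. Fix $X_0, Y_0 \in \Omega$ and, for $\delta > 0$, choose a path $X_0 = Z^{(0)}, Z^{(1)}, \dots, Z^{(\ell)} = Y_0$ in $G_\Omega$ with each consecutive pair adjacent and $\sum_{i=1}^\ell \hat D(Z^{(i-1)}, Z^{(i)}) \le D(X_0, Y_0) + \delta$, which is possible since $D$ is the shortest-path metric for $\hat D$. For each edge the hypothesis supplies a coupling of the two one-step laws; composing these transitively — realize the coupled pair for the first edge, then conditionally on its second coordinate run the coupling for the second edge, and so on — produces a single coupling $(X_1, Y_1)$ of $P(X_0, \cdot)$ and $P(Y_0, \cdot)$. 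Writing $W^{(i)}$ for the state coupled to $Z^{(i)}$, the triangle inequality for $D$, linearity of expectation, the identity $D = \hat D$ on adjacent pairs, and the contraction hypothesis on each edge give
\[
\E[D(X_1, Y_1)] \le \sum_{i=1}^\ell \E[D(W^{(i-1)}, W^{(i)})] \le e^{-\xi} \sum_{i=1}^\ell \hat D(Z^{(i-1)}, Z^{(i)}) \le e^{-\xi}\big(D(X_0, Y_0) + \delta\big),
\]
and letting $\delta \to 0$ yields $\E[D(X_1, Y_1)] \le e^{-\xi} D(X_0, Y_0)$ for \emph{all} pairs $X_0, Y_0$. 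Iterating this over $t$ steps — at each step applying the coupling just constructed to the current pair of states, then using the tower property — produces a coupling $(X_t, Y_t)$ of $P^t(X_0, \cdot)$ and $P^t(Y_0, \cdot)$ with $\E[D(X_t, Y_t)] \le e^{-\xi t} D(X_0, Y_0) \le e^{-\xi t}\, \mathrm{diam}(G_\Omega)$.

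Next I would convert this into a total-variation bound. Since $\hat D \ge 1$ on adjacent pairs, any path between distinct configurations has length at least $1$, so $D(X, Y) \ge \mathbf 1\{X \ne Y\}$; hence the coupling inequality and Markov's inequality give
\[
\|P^t(X_0, \cdot) - P^t(Y_0, \cdot)\|_{TV} \le \P(X_t \ne Y_t) \le \E[D(X_t, Y_t)] \le \mathrm{diam}(G_\Omega)\, e^{-\xi t}.
\]
Taking a supremum over $X_0, Y_0$ and noting that for any initial law $\mu_0$ one has $\|\mu_t - \pi\|_{TV} = \|\mu_0 P^t - \pi P^t\|_{TV} \le \sup_x \|P^t(x, \cdot) - \pi\|_{TV} \le \sup_{x,y} \|P^t(x, \cdot) - P^t(y, \cdot)\|_{TV}$ by convexity (here $\pi$ is the stationary law, whose existence is implicit in the statement, and in fact follows from the contraction, which makes $\mu \mapsto \mu P$ a contraction on a complete transportation metric), we obtain $\|\mu_t - \pi\|_{TV} \le \mathrm{diam}(G_\Omega)\, e^{-\xi t}$ uniformly in $\mu_0$. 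This is strictly below $\eps$ once $\xi t > \log \mathrm{diam}(G_\Omega) + \log(1/\eps)$, so it suffices to take $t = \lceil \xi^{-1}(\log \mathrm{diam}(G_\Omega) + \log(1/\eps)) \rceil$, giving the claimed bound on $t_{\mathrm{mix}}(\eps)$ (the borderline case of equality at the ceiling is harmless and can be absorbed by any crude slack).

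The only point requiring genuine care is the \emph{transitive composition of couplings}, both along a path (first paragraph) and across successive time steps (to form $(X_t, Y_t)$): one must check that the relevant conditional laws can be realized simultaneously on one probability space in a measurable way. For a standard Borel state space — the setting of the intended applications — this is routine via regular conditional distributions, and once it is in place the remainder is just the triangle inequality, the tower property, and Markov's inequality. I do not anticipate any further obstacle.
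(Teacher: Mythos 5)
The paper does not prove this lemma; it is invoked as a black box, cited directly from Levin--Peres \cite[Corollary 14.7]{levin2017markov} (which in turn is the path-coupling theorem of Bubley--Dyer \cite{bubley1997path}). Your argument is correct and is essentially the standard proof from that reference: glue the edge couplings along a near-geodesic to extend the one-step contraction to arbitrary pairs, iterate via the tower property, and conclude with $D \ge \mathbf 1\{X \ne Y\}$, the coupling inequality, and Markov. The only cosmetic looseness is the $\delta\to 0$ step (the coupling itself depends on $\delta$, so one either carries a vanishing $\delta$-error through the iteration or observes that in the application shortest paths exist), and the borderline strict-versus-nonstrict inequality at the ceiling, both of which you flag and neither of which is a genuine gap.
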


We now can prove Theorem~\ref{thmSSMtoMix}.

\begin{proof}[Proof of Theorem~\ref{thmSSMtoMix}]
For a finite-range repulsive potential $\phi$ with range at most $r$, assume that strong spatial mixing holds with constants $\alpha$ and $\beta$.  Set the update radius to be  $L = Kr$ where $K$ will be chosen sufficiently large later.  Let $V= |B_r|$ be the volume of the ball of radius $r$.   Let $N = |\Lam_n|$ be the volume of the ball of radius $n$. 

Now consider the Gibbs point process on $\Lam_n$ with activity function $\bl$.   Let $\Omega$ be the set of finite subsets of $\Lam_n$.  Define a graph structure on $\Omega$ by declaring $X, Y \in \Omega$ to be adjacent if the symmetric difference $X \triangle Y$ is contained in a ball of radius $r$ around some point in $\Lam_n$.  Then define the pre-metric on adjacent states $\hat D (X,Y) \equiv 1$.  This extends naturally to  a path metric $D$:  for $X, Y \in \Omega$,  $D(X,Y)$ is the minimum number of balls $B_r(u_1),\ldots,B_r(u_k)$ of radius $r$ so that $X$ and $Y$ agree away from $\bigcup_j B_r(u_j)$. Note that  since $\Lam_n$ can be covered by at most $C_d N$ balls of radius $r$ (where the constant $C_d$ depends on $d$ and $r$), we have $\diam(G_\Omega) \leq  C_d N$.

	Let $X_t$ and $Y_t$ be two radius-$L$ block dynamic chains for $\mu_{\bl}$ with $X_0 \setminus B_r(u) = Y_0 \setminus B_r(u)$, i.e. the two configurations are adjacent in $G_{\Omega}$ and their disagreements are contained in the ball of radius $r$ centered at $u$.  We will couple the two chains as follows: we choose the same update ball in each chain; if the boundary conditions agree then we make the same update.  If they disagree, we will choose a coupling  described below.  
	
	Define $\Delta  =  D(X_1,Y_1) - D(X_0,Y_0) =D(X_1,Y_1) - 1 $ and let $x$ be the random center of the update ball.  If $B_r(u) \subset B_L(x)$ i.e.\ $u \in B_{L-r}(x)$, then the boundary conditions in $X_0$ and $Y_0$ agree and so $X_1 = Y_1$; on this event, we have $\Delta = -1$.  Further, this event occurs with probability $$\Pr[B_r(u) \subset B_L(x)] = \frac{|B_{L-r}(u) \cap \Lambda_n|}{|\Lambda_n|} \geq \frac{(K-1)^d V}{N} \,. $$
	If $B_r(u) \cap B_{L + r}(x) = \emptyset$ i.e.\ $u \notin B_{L+2r}(x)$, then the boundary conditions of the update ball again agree, and so $X_1$ and $Y_1$ agree away from $B_r(u)$, and so $\Delta = 0$. 
	
	The remaining case is when $u \in B_{L+2r}(x) \setminus B_{L-r}(x)$.  In this case, the boundary conditions of the update ball may differ and so we may have $\Delta > 0$.  We bound the probability that this placement of $x$ occurs: $$\Pr[u \in B_{L + 2r}(x) \setminus B_{L -r}(x)] \leq \frac{(K +2 )^d V- (K-1)^dV }{N} \leq c_d \frac{K^{d-1}V}{N}\,.$$
We now bound the expected increase in $D$ in this case under a coupling provided below.
	
	Let $x \in \Lambda_n$ with $u \in B_{L+2r}(x) \setminus B_{L-r}(x)$.  Write $\tau_X$ for the boundary condition induced by $X_0$ and $\tau_Y$ for the condition induced by $Y_0$.  Note that by assumption $Y_0 \triangle X_0 \subset B_r(u)$.  For a parameter $t$ to be chosen later, let $A = B_L(x) \cap B_t(u)$ and set $\Abar = B_{L}(x) \setminus A$.
	
	Write $D(X_1,Y_1) \leq D(X_1 \cap \Abar,Y_1 \cap \Abar) +  D(X_1 \cap A,Y_1 \cap A)$.  We will first update both $X_1$ and $Y_1$ in $\Abar$ and then update both configurations within $A$ independently.  Note that deterministically we have $ D(X_1 \cap A,Y_1 \cap A) \leq C_d |A| \leq C_d t^d\,.$
	
	We now describe the coupling to update within $\Abar$.  The total variation distance between $\mu_{B_L(x)}^{\tau_X}$ and $\mu_{B_L(x)}^{\tau_Y}$ restricted to $\Abar$ may be bounded using the strong spacial mixing assumption, i.e. \begin{align*}
	\|\mu_{B_L(x)}^{\tau_X} -\mu_{B_L(x)}^{\tau_X}  \|_{\Abar} &\leq \alpha | \Abar| \exp\left( -  \beta \cdot \dist(\tau_X \triangle \tau_Y, \Abar)\right) \\
	&\leq \alpha |\Abar| \exp\left(-\beta \cdot \dist(B_r(u),\Abar \right) \\ 
	&\leq \alpha  K^d V \exp\left(-\beta(t-r) \right)\,.
	\end{align*}
	
	Thus there exists a coupling of $X_1, Y_1$ so that they disagree within $\Abar$ with probability at most $\alpha K^d V \exp(-\beta(t- r))$.  On the event that they agree, $\Delta = 0$, and so we need only handle the case in which they fail to couple.  An upper bound on the increase in $D$ within $\Abar$ is $C_d K^d V$, and so $$ \E[ \Delta \,|\, u \in B_{L+2r} \setminus B_{L-r}(x)] \leq  C_d \left( t^d + \alpha K^{2d} e^{-\beta(t-r)} \right)\,.$$
	Thus, we may bound the expectation of $\Delta$: \begin{align*}\E[\Delta] \leq -\frac{(K-1)^d V}{ N} + c_d C_d\frac{K^{d-1} V}{N}\left(  t^d + \alpha K^{d} V e^{-\beta(t-r)} \right)\,.
	\end{align*}
	Pick $t = \gamma K^{1/d}$ where we will choose $\gamma > 0$ sufficiently small momentarily; then \begin{align*}
	\E[\Delta] &\leq -\frac{V}{N}\left( (K-1)^d - c_dC_d K^{d-1} (\gamma^d K + \alpha K^{d} V e^{- \beta \gamma K^{1/d} +\beta r  } )\right) \\
	&\leq -\frac{K^d V}{ N} \left( (1 - 1/K)^d - c_d C_d \gamma^d - \alpha  K^{2d} V e^{- \beta \gamma K^{1/d} + \beta r} \right) \\
	&\leq -\frac{K^d V}{ N} \left(\frac{1}{2} - \alpha K^{2d} V e^{- \beta \gamma K^{1/d} + \beta r} \right) \end{align*}
	where the last inequality holds by assuming $K \geq 2$ and choosing $\gamma$ small enough as a function of $d$.  Taking $K$ sufficiently large depending on $\alpha,\beta,d$ and $r$ yields $$\E[\Delta] \leq -\frac{K^d V}{4 N}\,.$$
Applying Lemma~\ref{lemPathCouple} with $\xi = \frac{K^d V}{4 N}$ and $\text{diam}(G_{\Omega}) \le C_d N$	   then yields Theorem~\ref{thmSSMtoMix}.
\end{proof}

\section{Approximate counting}
\label{secApproxCount}

Here we prove Corollary~\ref{thmApproxAlg} following a standard reduction of approximate counting to approximate sampling.  In this section the activity function $\bl$ will be fixed, so we drop it from the notation, writing $Z_{\Lam}$ for $Z_{\Lam}(\bl)$ and $\mu_{\Lam}$ for $\mu_{\Lam, \bl}$. 

Recall that $N = |\Lam_n|$. 
We partition $\Lam_n$ into $N$ boxes of volume $1$, 
$$\Lam_n = S_1 \uplus S_2  \uplus \cdots \uplus S_N \, . $$
For $k = 0, \dots , N$, let $\Lam^{(k)} = \Lam_n \setminus \left( \bigcup_{j=1}^k S_j  \right) $.  In particular, $\Lam^{(0)} = \Lam_n$ and $\Lam^{(N)}= \emptyset$.  

We aim to estimate $Z_{\Lam_n} $ which can be written as the inverse of the probability of seeing no points in the random point set $\mathbf X$:
\[ Z_{\Lam_n} = \frac{1}{\mu_{\Lam_n} (  \mathbf X = \emptyset  )  }  \,,\]
so it will suffice to approximate  $\mu_{\Lam_n} (  \mathbf X = \emptyset  ) $.   

By the spatial Markov property of a Markov random field, the law of $\mu_{\Lam_n}$ conditioned on the event  $\{ \mathbf X \cap \bigcup_{j=1}^k S_k = \emptyset \}$ is $\mu_{\Lam^{(k)}}$.   We can then write
\begin{align*}
\mu_{\Lam_n} (  \mathbf X = \emptyset  ) &= \prod_{j=0}^{N-1}   \mu_{\Lam^{(k)}}  \left( \mathbf X \cap S_{j+1} = \emptyset  \right)  \,.
\end{align*}

We first show how to approximate $\mu_{\Lam_n} (  \mathbf X = \emptyset  )$ given access to an exact sampler from the Gibbs point process; we then will extend this to work with the approximate sampler given by Theorem~\ref{thmFastMixing}.  
\begin{lemma}
\label{lemExactCounter}
There is a randomized algorithm that with probability at least $7/8$ produces an $\eps$-relative approximation to $\mu_{\Lam_n} (  \mathbf X = \emptyset  )$ given access to exact samples from $\mu_{\Lam^{(k)} }$ for $k=0, \dots, N$.  The algorithm uses $O(N^2/\eps^2)$ such samples and runs in time $O(N^2/\eps^2)$. 
\end{lemma}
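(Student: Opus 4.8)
The plan is to estimate each factor $\mu_{\Lam^{(k)}}(\mathbf X \cap S_{k+1} = \emptyset)$ separately by empirical sampling, and then multiply the estimates together, controlling the compounding of relative errors. First I would record two structural facts. The lower bound: by Poisson domination, $\mathbf X$ restricted to $S_{k+1}$ is stochastically dominated by a Poisson process of intensity $\lam$ on a volume-$1$ box, so $p_k := \mu_{\Lam^{(k)}}(\mathbf X \cap S_{k+1} = \emptyset) \ge e^{-\lam}$, a constant bounded away from $0$ uniformly in $k$ and $n$. The telescoping identity $\mu_{\Lam_n}(\mathbf X = \emptyset) = \prod_{k=0}^{N-1} p_k$ from the spatial Markov property is already given, so the target quantity lies in $[e^{-\lam N}, 1]$.

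Next I would set up the estimator. For each $k \in \{0,\dots,N-1\}$ draw $m = \Theta(N/\eps^2)$ independent samples from $\mu_{\Lam^{(k)}}$, let $\hat p_k$ be the fraction of samples with no point in $S_{k+1}$, and output $\hat Z = 1/\prod_{k=0}^{N-1}\hat p_k$. Each $\hat p_k$ is an average of $m$ i.i.d.\ Bernoulli$(p_k)$ variables with $p_k \ge e^{-\lam}$, so $\mathrm{Var}(\hat p_k/p_k) = (1-p_k)/(m p_k) \le e^{\lam}/m$. Choosing $m = C N / \eps^2$ for a suitable constant $C = C(\lam)$ makes $\E(\hat p_k/p_k) = 1$ and $\mathrm{Var}(\hat p_k/p_k) \le \eps^2/(100 N)$. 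Since the $N$ blocks of samples are independent, the ratios $\hat p_k/p_k$ are independent, so $\prod_k (\hat p_k/p_k)$ has mean $1$ and variance $\prod_k(1 + \eps^2/(100N)) - 1 \le e^{\eps^2/100} - 1 \le \eps^2/50$ for $\eps \le 1$. By Chebyshev, with probability at least $7/8$ we have $|\prod_k \hat p_k / \prod_k p_k - 1| \le \eps/2$, which gives $e^{-\eps}\hat Z \le Z_{\Lam_n} \le e^{\eps}\hat Z$ after taking reciprocals (using $1 - \eps/2 \ge e^{-\eps}$ and $1+\eps/2 \le e^{\eps}$ for $\eps \in (0,1]$). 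The total number of samples is $Nm = O(N^2/\eps^2)$, and since each sample from $\mu_{\Lam^{(k)}}$ on a box of volume at most $N$ (drawn from an exact oracle) is processed in time $O(1)$ to check emptiness of $S_{k+1}$, the running time is $O(N^2/\eps^2)$.

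The main obstacle—and the reason one cannot simply take the naive union bound or a single Markov-inequality argument on the product—is that the product of $N$ individually-accurate estimators can amplify errors exponentially unless the per-factor relative error is taken as small as $\eps/\sqrt{N}$; this is exactly why the variance of each $\hat p_k/p_k$ must be driven down to $O(\eps^2/N)$, forcing $m = \Theta(N/\eps^2)$ and hence the $O(N^2/\eps^2)$ sample complexity. The uniform lower bound $p_k \ge e^{-\lam}$ from Poisson domination is what makes this feasible: without it, some $p_k$ could be exponentially small and no polynomial number of samples would suffice. (A minor point to note is that this argument needs only $\lam$-boundedness of $\bl$, not the sharper bound $\lam < e/\Delta_\phi$; the latter enters only through the availability of an \emph{efficient approximate} sampler, which replaces the exact oracle in the proof of Corollary~\ref{thmApproxAlg}.)
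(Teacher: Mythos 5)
Your proposal is correct and follows essentially the same route as the paper: estimate each factor $p_k=\mu_{\Lam^{(k)}}(\mathbf X \cap S_{k+1}=\emptyset)$ with $\Theta(N/\eps^2)$ samples, use Poisson domination to get $p_k \ge e^{-\lam}$, bound the variance of the normalized product $\prod_k \hat p_k/p_k$ by $O(\eps^2)$, and close with Chebyshev. The only cosmetic difference is that the paper reports $\prod_j \overline Y_j$ as the output (an approximation to $\mu_{\Lam_n}(\mathbf X=\emptyset)$ directly) whereas you take the reciprocal to approximate $Z_{\Lam_n}$; both give the same estimator.
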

\begin{proof}
Let $T = C N/\eps^2 $.  For $j=0, \dots , N-1$, let $\overline Y_j = ( Y_{1j} + \dots + Y_{Tj}  )/T$ where $Y_{ij}$ is the indicator  that $\mathbf X \cap S_{j+1} = \emptyset$ in a sample from $\mu_{\Lam^{(k)}}$, with all samples taken independently.  Then by construction $\E \overline Y_j =  \mu_{\Lam^{(k)}}  \left( \mathbf X \cap S_{j+1} = \emptyset  \right)$, and $\var(\overline Y_j ) \le \E \overline Y_j /T$.   Using Poisson domination we can  lower bound  $\mu_{\Lam^{(k)}}  \left( \mathbf X \cap S_{j+1} = \emptyset  \right)$ by $e^{-\lam}$ and so obtain
\begin{align*}
\frac{ \var( \overline Y_j)  }{ (\E  \overline Y_j)^2 } \le  \frac{e^{\lam}}{    T} \,.
\end{align*}

 Now let
\begin{align*}
W =\prod_{j=0}^{N-1}  \frac{ \overline Y_j  }{     \mu_{\Lam^{(k)}}  \left( \mathbf X \cap S_{j+1} = \emptyset  \right)  } \,.
\end{align*}
We have $\E W =1$, and 
\begin{align*}
\var(W) & = \prod_{j=0}^{N-1}   \frac{  \E [ \overline Y_j^2] }{  \mu_{\Lam^{(k)}}  \left( \mathbf X \cap S_{j+1} = \emptyset  \right) ^2  }   -1  =  \prod_{j=0}^{N-1}  \left( 1 +   \frac{  \var (Y_j) }{  \mu_{\Lam^{(k)}}  \left( \mathbf X \cap S_{j+1} = \emptyset  \right) ^2  }  \right)  -1  \\
&\le \left( 1+   \frac{e^{\lam}}{    T}  \right)^N -1 \le \exp ( N e^{\lam} /T) -1 \le 2 e^{\lam}\eps^2  /C \,. 
\end{align*}
Now by Chebyshev's inequality,
\begin{align*}
\Pr [ e^{-\eps} \le W \le e^{\eps} ] &\ge 1-  \Pr[ | W -1| \ge  \eps/2 ]  \ge 1- \frac{4 \var(W)  }{ \eps^2    } \ge 1-  \frac{ 8 e^{\lam}\eps^2   }{  C\eps^2    }  \, ,
\end{align*}
which for $C$ large enough is at least $7/8$, and so outputting $\prod_{j=0}^{N-1} \overline Y_j$ gives the desired approximation.   The number of samples used is $N T$ and the running time is $O(NT ) = O( N^2/\eps^2)$. 
\end{proof}

With this we can prove Corollary~\ref{thmApproxAlg}. 
\begin{proof}[Proof of Corollary~\ref{thmApproxAlg}]  Set $T = C N / \eps^2$ as in the proof of Lemma \ref{lemExactCounter}.
Theorem~\ref{thmFastMixing} gives us a sampler accurate to within total variation distance $(8NT)^{-1}$ with each sample taking a running time of $O( N \log (n^2 T)) = O( N \log (N/\eps))$.   Using the properties of total variation distance, there is a coupling of $NT$ independent samples from this sampler and the $NT$ independent exact samples  used in Lemma~\ref{lemExactCounter} so that the sequences agree with probability at least $7/8$.   In particular, running the algorithm of Lemma~\ref{lemExactCounter} with this approximate sampler yields an $\eps$-relative approximation to $Z_{\Lam_n}$ with probability at least $3/4$.  The running time is $O( N^3 \eps^{-2}  \log (N/\eps))$.   
\end{proof}


\begin{thebibliography}{BHTLV20}

\bibitem[AW57]{alder1957phase}
Berni~Julian Alder and Thomas~Everett Wainwright.
\newblock Phase transition for a hard sphere system.
\newblock {\em The Journal of Chemical Physics}, 27(5):1208--1209, 1957.

\bibitem[BD97]{bubley1997path}
Russ Bubley and Martin Dyer.
\newblock Path coupling: {A} technique for proving rapid mixing in {M}arkov
  chains.
\newblock In {\em Proceedings 38th Annual Symposium on Foundations of Computer
  Science}, pages 223--231. IEEE, 1997.

\bibitem[BHTLV20]{benevs2019decorrelation}
Viktor Bene{\v{s}}, Christoph Hofer-Temmel, G{\"u}nter Last, and Jakub
  Ve{\v{c}}e{\v{r}}a.
\newblock Decorrelation of a class of {G}ibbs particle processes and asymptotic
  properties of {U}-statistics.
\newblock {\em Journal of Applied Probability}, 57(3):928--955, 2020.

\bibitem[BK90]{borgs1990rigorous}
Christian Borgs and Roman Koteck{\`y}.
\newblock A rigorous theory of finite-size scaling at first-order phase
  transitions.
\newblock {\em Journal of statistical physics}, 61(1):79--119, 1990.

\bibitem[BK11]{Krauth}
Etienne~P Bernard and Werner Krauth.
\newblock Two-step melting in two dimensions: First-order liquid-hexatic
  transition.
\newblock {\em Physical Review Letters}, 107(15):155704, 2011.

\bibitem[BKMS91]{borgs1991finite}
Christian Borgs, Roman Koteck{\`y}, and Salvador Miracle-Sol{\'e}.
\newblock Finite-size scaling for {P}otts models.
\newblock {\em Journal of statistical physics}, 62(3):529--551, 1991.

\bibitem[BL21]{betsch2021uniqueness}
Steffen Betsch and G{\"u}nter Last.
\newblock On the uniqueness of {G}ibbs distributions with a non-negative and
  subcritical pair potential.
\newblock {\em arXiv preprint arXiv:2108.06303}, 2021.

\bibitem[BLP80]{bricmont1980surface}
Jean Bricmont, Joel~L Lebowitz, and Charles~E Pfistert.
\newblock On the surface tension of lattice systems.
\newblock {\em Annals of the New York Academy of Sciences}, 337(1):214--223,
  1980.

\bibitem[BSS89]{blum1989theory}
Lenore Blum, Mike Shub, and Steve Smale.
\newblock On a theory of computation and complexity over the real numbers:
  {NP}-completeness, recursive functions and universal machines.
\newblock {\em Bulletin of the American Mathematical Society}, 21(1):1--46,
  1989.

\bibitem[CF79]{caginalp1979wall}
Gunduz Caginalp and Michael~E Fisher.
\newblock Wall and boundary free energies. {II}.
\newblock {\em Communications in Mathematical Physics}, 65(3):247--280, 1979.

\bibitem[DS85]{dobrushin1985completely}
Roland~L Dobrushin and Senya~B Shlosman.
\newblock Completely analytical {G}ibbs fields.
\newblock In {\em Statistical physics and dynamical systems}, pages 371--403.
  Springer, 1985.

\bibitem[DSVW04]{dyer2004mixing}
Martin Dyer, Alistair Sinclair, Eric Vigoda, and Dror Weitz.
\newblock Mixing in time and space for lattice spin systems: A combinatorial
  view.
\newblock {\em Random Structures \& Algorithms}, 24(4):461--479, 2004.

\bibitem[DVJ07]{daley2007introduction}
Daryl~J Daley and David Vere-Jones.
\newblock {\em An introduction to the theory of point processes: volume {II}:
  general theory and structure}.
\newblock Springer Science \& Business Media, 2007.

\bibitem[FC77]{fisher1977wall}
Michael~E Fisher and Gunduz Caginalp.
\newblock Wall and boundary free energies. {I}.
\newblock {\em Communications in Mathematical Physics}, 56(1):11--56, 1977.

\bibitem[Fer67]{ferdinand1967statistical}
Arthur~E Ferdinand.
\newblock Statistical mechanics of dimers on a quadratic lattice.
\newblock {\em Journal of Mathematical Physics}, 8(12):2332--2339, 1967.

\bibitem[FF67]{fisher1967interfacial}
Michael~E Fisher and Arthur~E Ferdinand.
\newblock Interfacial, boundary, and size effects at critical points.
\newblock {\em Physical Review Letters}, 19(4):169, 1967.

\bibitem[FGK{\etalchar{+}}21]{friedrichAlg}
Tobias Friedrich, Andreas G{\"o}bel, Maximilian Katzmann, Martin~S Krejca, and
  Marcus Pappik.
\newblock Algorithms for general hard-constraint point processes via
  discretization.
\newblock {\em arXiv preprint arXiv:2107.08848}, 2021.

\bibitem[FGKP21]{friedrich2021spectral}
Tobias Friedrich, Andreas G{\"o}bel, Martin Krejca, and Marcus Pappik.
\newblock A spectral independence view on hard spheres via block dynamics.
\newblock In {\em International Colloquium on Automata, Languages and
  Programming 2021}, volume 198, pages 66--1, 2021.

\bibitem[FGY22]{feng2019perfect}
Weiming Feng, Heng Guo, and Yitong Yin.
\newblock Perfect sampling from spatial mixing.
\newblock {\em Random Structures \& Algorithms}, 2022.

\bibitem[Fis61]{fisher1961statistical}
Michael~E Fisher.
\newblock Statistical mechanics of dimers on a plane lattice.
\newblock {\em Physical Review}, 124(6):1664, 1961.

\bibitem[FL70]{fisher1970asymptotic}
Michael~E Fisher and JL~Lebowitz.
\newblock Asymptotic free energy of a system with periodic boundary conditions.
\newblock {\em Communications in Mathematical Physics}, 19(4):251--272, 1970.

\bibitem[FP87]{frohlich1987semi}
J{\"u}rg Fr{\"o}hlich and Charles-Ed Pfister.
\newblock Semi-infinite {I}sing model.
\newblock {\em Communications in mathematical physics}, 109(3):493--523, 1987.

\bibitem[FPS07]{fernandez2007analyticity}
Roberto Fern\'andez, Aldo Procacci, and Benedetto Scoppola.
\newblock The analyticity region of the hard sphere gas. {I}mproved bounds.
\newblock {\em J. Stat. Phys.}, 5:1139--1143, 2007.

\bibitem[Gar00]{garcia2000perfect}
Nancy~L Garcia.
\newblock Perfect simulation of spatial processes.
\newblock {\em Resenhas do Instituto de Matem{\'a}tica e Estat{\'\i}stica da
  Universidade de S{\~a}o Paulo}, 4(3):283--325, 2000.

\bibitem[GJ21]{jerrum2019perfect}
Heng Guo and Mark Jerrum.
\newblock Perfect simulation of the hard disks model by partial rejection
  sampling.
\newblock {\em Annales de l’Institut Henri Poincar{\'e} D}, 8(2):159--177,
  2021.

\bibitem[GK09]{gamarnik2009sequential}
David Gamarnik and Dmitriy Katz.
\newblock Sequential cavity method for computing free energy and surface
  pressure.
\newblock {\em Journal of Statistical Physics}, 137(2):205--232, 2009.

\bibitem[Gro62]{groeneveld1962two}
J~Groeneveld.
\newblock Two theorems on classical many-particle systems.
\newblock {\em Phys. Letters}, 3, 1962.

\bibitem[HM14]{hayes2014lower}
Thomas~P Hayes and Cristopher Moore.
\newblock Lower bounds on the critical density in the hard disk model via
  optimized metrics.
\newblock {\em arXiv preprint arXiv:1407.1930}, 2014.

\bibitem[HPP22]{helmuth2020correlation}
Tyler Helmuth, Will Perkins, and Samantha Petti.
\newblock Correlation decay for hard spheres via {M}arkov chains.
\newblock {\em The Annals of Applied Probability}, 32(3):2063--2082, 2022.

\bibitem[HTH19]{hofer2019disagreement}
Christoph Hofer-Temmel and Pierre Houdebert.
\newblock Disagreement percolation for {G}ibbs ball models.
\newblock {\em Stochastic Processes and their Applications},
  129(10):3922--3940, 2019.

\bibitem[Hub12]{huber2012spatial}
Mark Huber.
\newblock Spatial birth--death swap chains.
\newblock {\em Bernoulli}, 18(3):1031--1041, 2012.

\bibitem[Hub16]{huber2016perfect}
Mark~L Huber.
\newblock {\em Perfect simulation}, volume 148.
\newblock CRC Press, 2016.

\bibitem[HVLM99]{haggstrom1999characterization}
Olle H{\"a}ggstr{\"o}m, Marie-Colette~NM Van~Lieshout, and Jesper M{\o}ller.
\newblock Characterization results and {M}arkov chain {M}onte {C}arlo
  algorithms including exact simulation for some spatial point processes.
\newblock {\em Bernoulli}, 5(4):641--658, 1999.

\bibitem[HVRX13]{huber2013bounds}
Mark Huber, Elise Villella, Daniel Rozenfeld, and Jason Xu.
\newblock Bounds on the artificial phase transition for perfect simulation of
  hard core {G}ibbs processes.
\newblock {\em Involve, a Journal of Mathematics}, 5(3):247--255, 2013.

\bibitem[Jan19]{jansen2019cluster}
Sabine Jansen.
\newblock Cluster expansions for {G}ibbs point processes.
\newblock {\em Advances in Applied Probability}, 51(4):1129--1178, 2019.

\bibitem[Ken98]{kendall1998perfect}
Wilfrid~S Kendall.
\newblock Perfect simulation for the area-interaction point process.
\newblock In {\em Probability towards 2000}, pages 218--234. Springer, 1998.

\bibitem[KM00]{kendall2000perfect}
Wilfrid~S Kendall and Jesper M{\o}ller.
\newblock Perfect simulation using dominating processes on ordered spaces, with
  application to locally stable point processes.
\newblock {\em Advances in Applied Probability}, pages 844--865, 2000.

\bibitem[KMM03]{kannan2003rapid}
Ravi Kannan, Michael~W. Mahoney, and Ravi Montenegro.
\newblock Rapid mixing of several {M}arkov chains for a hard-core model.
\newblock In {\em Algorithms and computation}, volume 2906 of {\em Lecture
  Notes in Comput. Sci.}, pages 663--675. Springer, Berlin, 2003.

\bibitem[KR76]{kelly1976note}
Frank~P Kelly and Brian~D Ripley.
\newblock A note on {S}trauss's model for clustering.
\newblock {\em Biometrika}, pages 357--360, 1976.

\bibitem[LO21]{last2021disagreement}
G{\"u}nter Last and Moritz Otto.
\newblock Disagreement coupling of {G}ibbs processes with an application to
  {P}oisson approximation.
\newblock {\em arXiv preprint arXiv:2104.00737}, 2021.

\bibitem[L{\"o}w00]{lowen2000fun}
Hartmut L{\"o}wen.
\newblock Fun with hard spheres.
\newblock In {\em Statistical physics and spatial statistics}, volume 554,
  pages 295--331. Springer, 2000.

\bibitem[LP64]{lebowitz1964convergence}
J.L. Lebowitz and O.~Penrose.
\newblock Convergence of virial expansions.
\newblock {\em J. Mathematical Phys.}, 5:841--847, 1964.

\bibitem[LP17]{levin2017markov}
David~A Levin and Yuval Peres.
\newblock {\em Markov Chains and Mixing Times}, volume 107.
\newblock American Mathematical Soc., 2017.

\bibitem[Mar99]{martinelli1999lectures}
Fabio Martinelli.
\newblock Lectures on {G}lauber dynamics for discrete spin models.
\newblock In {\em Lectures on probability theory and statistics}, pages
  93--191. Springer, 1999.

\bibitem[MM41]{mayer1941molecular}
Joseph~E Mayer and Elliott Montroll.
\newblock Molecular distribution.
\newblock {\em The Journal of Chemical Physics}, 9(1):2--16, 1941.

\bibitem[MO94]{martinelli1994approach}
Fabio Martinelli and Enzo Olivieri.
\newblock Approach to equilibrium of {G}lauber dynamics in the one phase
  region.
\newblock {\em Communications in Mathematical Physics}, 161(3):447--486, 1994.

\bibitem[M{\o}l89]{moller1989rate}
Jesper M{\o}ller.
\newblock On the rate of convergence of spatial birth-and-death processes.
\newblock {\em Annals of the Institute of Statistical Mathematics},
  41(3):565--581, 1989.

\bibitem[M{\o}l01]{moller2001review}
Jesper M{\o}ller.
\newblock A review of perfect simulation in stochastic geometry.
\newblock {\em Lecture Notes-Monograph Series}, pages 333--355, 2001.

\bibitem[MP20]{michelen2020analyticity}
Marcus Michelen and Will Perkins.
\newblock Analyticity for classical gasses via recursion.
\newblock {\em arXiv preprint arXiv:2008.00972}, 2020.

\bibitem[MP21]{mp-CC}
Marcus Michelen and Will Perkins.
\newblock Potential-weighted connective constants and uniqueness of {G}ibbs
  measures.
\newblock {\em arXiv preprint arXiv:2109.01094}, 2021.

\bibitem[MRR{\etalchar{+}}53]{metropolis1953equation}
Nicholas Metropolis, Arianna~W Rosenbluth, Marshall~N Rosenbluth, Augusta~H
  Teller, and Edward Teller.
\newblock Equation of state calculations by fast computing machines.
\newblock {\em The Journal of Chemical Physics}, 21(6):1087--1092, 1953.

\bibitem[MW07]{moller2007modern}
Jesper M{\o}ller and Rasmus~P Waagepetersen.
\newblock Modern statistics for spatial point processes.
\newblock {\em Scandinavian Journal of Statistics}, 34(4):643--684, 2007.

\bibitem[NF20]{nguyen2020convergence}
Tong~Xuan Nguyen and Roberto Fern{\'a}ndez.
\newblock Convergence of cluster and virial expansions for repulsive classical
  gases.
\newblock {\em Journal of Statistical Physics}, 179:448--484, 2020.

\bibitem[Pen63]{penrose1963convergence}
Oliver Penrose.
\newblock Convergence of fugacity expansions for fluids and lattice gases.
\newblock {\em Journal of Mathematical Physics}, 4(10):1312--1320, 1963.

\bibitem[Pre75]{preston1975spatial}
Chris Preston.
\newblock Spatial birth and death processes.
\newblock {\em Advances in Applied Probability}, 7(3):465--466, 1975.

\bibitem[PT15]{pulvirenti2015finite}
Elena Pulvirenti and Dimitrios Tsagkarogiannis.
\newblock Finite volume corrections and decay of correlations in the canonical
  ensemble.
\newblock {\em Journal of Statistical Physics}, 159(5):1017--1039, 2015.

\bibitem[PW96]{propp1996exact}
James~Gary Propp and David~Bruce Wilson.
\newblock Exact sampling with coupled {M}arkov chains and applications to
  statistical mechanics.
\newblock {\em Random Structures \& Algorithms}, 9(1-2):223--252, 1996.

\bibitem[Rue63]{ruelle1963correlation}
David Ruelle.
\newblock Correlation functions of classical gases.
\newblock {\em Annals of Physics}, 25:109--120, 1963.

\bibitem[Rue99]{ruelle1999statistical}
David Ruelle.
\newblock {\em Statistical mechanics: Rigorous results}.
\newblock World Scientific, 1999.

\bibitem[SS{\v{S}}Y17]{sinclair2017spatial}
Alistair Sinclair, Piyush Srivastava, Daniel {\v{S}}tefankovi{\v{c}}, and
  Yitong Yin.
\newblock Spatial mixing and the connective constant: Optimal bounds.
\newblock {\em Probability Theory and Related Fields}, 168(1-2):153--197, 2017.

\bibitem[SSY13]{sinclair2013spatial}
Alistair Sinclair, Piyush Srivastava, and Yitong Yin.
\newblock Spatial mixing and approximation algorithms for graphs with bounded
  connective constant.
\newblock In {\em 2013 IEEE 54th Annual Symposium on Foundations of Computer
  Science}, pages 300--309. IEEE, 2013.

\bibitem[Str75]{strauss1975model}
David~J Strauss.
\newblock A model for clustering.
\newblock {\em Biometrika}, 62(2):467--475, 1975.

\bibitem[SZ92a]{stroock1992equivalence}
Daniel~W Stroock and Boguslaw Zegarlinski.
\newblock The equivalence of the logarithmic {S}obolev inequality and the
  {D}obrushin-{S}hlosman mixing condition.
\newblock {\em Communications in mathematical physics}, 144(2):303--323, 1992.

\bibitem[SZ92b]{stroock1992logarithmic}
Daniel~W Stroock and Boguslaw Zegarlinski.
\newblock The logarithmic {S}obolev inequality for discrete spin systems on a
  lattice.
\newblock {\em Communications in Mathematical Physics}, 149(1):175--193, 1992.

\bibitem[Uel04]{ueltschi2004cluster}
Daniel Ueltschi.
\newblock Cluster expansions and correlation functions.
\newblock {\em Moscow Mathematical Journal}, 4(2):511--522, 2004.

\bibitem[vdB93]{van1993uniqueness}
Jacob van~den Berg.
\newblock A uniqueness condition for {G}ibbs measures, with applications to the
  2-dimensional {I}sing antiferromagnet.
\newblock {\em Communications in Mathematical Physics}, 152(1):161--166, 1993.

\bibitem[vdBS94]{van1994percolation}
Jacob van~den Berg and Jeffrey~E Steif.
\newblock Percolation and the hard-core lattice gas model.
\newblock {\em Stochastic Processes and their Applications}, 49(2):179--197,
  1994.

\bibitem[Wei06]{Weitz}
Dror Weitz.
\newblock Counting independent sets up to the tree threshold.
\newblock In {\em Proceedings of the Thirty-Eighth Annual ACM {S}ymposium on
  Theory of {C}omputing}, STOC 2006, pages 140--149. ACM, 2006.

\bibitem[Wel18]{wellens2018note}
Jake Wellens.
\newblock A note on partial rejection sampling for the hard disks model in the
  plane.
\newblock {\em arXiv preprint arXiv:1808.03367}, 2018.

\bibitem[YL52]{yang1952statistical}
Chen-Ning Yang and Tsung-Dao Lee.
\newblock Statistical theory of equations of state and phase transitions. {I}.
  {T}heory of condensation.
\newblock {\em Physical Review}, 87(3):404, 1952.

\end{thebibliography}
\end{document}